\title[Propagation of exponential phase space singularities]{Propagation of exponential phase space singularities for Schr\"odinger equations with quadratic Hamiltonians}
\author[E. Carypis]{Evanthia Carypis}
\address{Department of Mathematics, University of Turin, Via Carlo Alberto 10, 10123 Torino (TO), Italy}
\email{evanthia.carypis@unito.it}
\author[P. Wahlberg]{Patrik Wahlberg}
\address{Department of Mathematics, Linn{\ae}us University, SE--35195 V{\"a}xj{\"o}, Sweden}
\email{patrik.wahlberg@lnu.se}
\numberwithin{equation}{section}          %Detta gr att man f�r
\newtheorem{thm}{Theorem}
\numberwithin{thm}{section}
\newcommand{\rubrik}{}
\newtheorem{prop}[thm]{Proposition}
\newtheorem{cor}[thm]{Corollary}
\newtheorem{lem}[thm]{Lemma}
\theoremstyle{definition}
\newtheorem{defn}[thm]{Definition}
\newtheorem{example}[thm]{Example}
\theoremstyle{remark}
\newtheorem{rem}[thm]{Remark}              %T o m hit r bara allmn
\newcommand{\pd}[1] {\partial ^#1}
\newcommand{\ro}{\mathbf R}
\newcommand{\no}{\mathbf N}
\newcommand{\rr}[1]{\mathbf R^{#1}}
\newcommand{\nn}[1]{\mathbf N^{#1}}
\newcommand{\co}{\mathbf C}
\newcommand{\cc}[1]{\mathbf C^{#1}}
\newcommand{\nm}[2]{\Vert #1\Vert _{#2}}
\newcommand{\Ker}{\operatorname{Ker}}
\newcommand{\Ran}{\operatorname{Ran}}
\newcommand{\ep}{\varepsilon}
\newcommand{\fy}{\varphi}
\newcommand{\supp}{\operatorname{supp}}
\newcommand{\eabs}[1]{\langle #1\rangle}
\newcommand{\Sp}{\operatorname{Sp}}
\newcommand{\ssp}{\operatorname{sp}}
\newcommand{\Mp}{\operatorname{Mp}}
\newcommand{\GL}{\operatorname{GL}}
\newcommand{\charac}{\operatorname{char}}
\newcommand{\cS}{\mathscr{S}}
\newcommand{\cF}{\mathscr{F}}
\newcommand{\cK}{\mathscr{K}}
\newcommand{\wh}{\widehat}
\newcommand{\re}{{\rm Re} \, }
\newcommand{\im}{{\rm Im} \, }
\newcommand{\J}{\mathcal{J}}
\def\la{\langle}
\def\ra{\rangle}
\begin{document}

\begin{abstract}
We study propagation of phase space singularities for the initial value Cauchy problem for a class of Schr\"odinger equations. 
The Hamiltonian is the Weyl quantization of a quadratic form whose real part is non-negative. 
The equations are studied in the framework of projective Gelfand--Shilov spaces and their distribution duals. 
The corresponding notion of singularities is called the Gelfand--Shilov wave front set and means the lack of exponential decay in open cones in phase space. 
Our main result shows that the propagation is determined by the singular space of the quadratic form, just as in the framework of the Schwartz space, where the notion of singularity is the Gabor wave front set.  
\end{abstract}

\keywords{Schr\"odinger equation, heat equation, propagation of singularities, phase space singularities, Gelfand--Shilov wave front set.
MSC 2010 codes: 35A18, 35A21, 35Q40, 35Q79, 35S10.}

\maketitle

%%%%%%%%%%%%%%%%%%%%%%
\section{Introduction}\label{sec:intro}
%%%%%%%%%%%%%%%%%%%%%%

The goal of this paper is to study propagation of singularities for 
the initial value Cauchy problem for a Schr\"odinger type equation
\begin{equation}\label{schrodingerequation}
\left\{
\begin{array}{rl}
\partial_t u(t,x) + q^w(x,D) u (t,x) & = 0,  \qquad t \geqslant 0, \quad x \in \rr d, \\
u(0,\cdot) & = u_0,  
\end{array}
\right.
\end{equation}
where $u_0$ is a Gelfand--Shilov distribution on $\rr d$, $q=q(x,\xi)$ is a quadratic form on the phase space $(x,\xi) \in T^* \rr d$ with $\re q \geqslant 0$, and $q^w(x,D)$ is a pseudodifferential operator in the Weyl quantization. 
This family of equations comprises the free Schr\"odinger equation where $q(x,\xi)= i |\xi|^2$, the harmonic oscillator where $q(x,\xi)= i (|x|^2 + |\xi|^2)$ and the heat equation where $q(x,\xi)= |\xi|^2$.

The problem has been studied in the space of tempered distributions \cite{Rodino2,Wahlberg1} where the natural notion of singularity is the Gabor wave front set. 
This concept of singularity is defined as the conical subset of the phase space $T^* \rr d \setminus \{0\}$ in which the short-time Fourier transform does not have rapid (superpolynomial) decay. The Gabor wave front set of a tempered distribution is empty exactly when the distribution is a Schwartz function so it measures deviation from regularity in the sense of both smoothness and decay at infinity comprehensively. 

In this work we study propagation of singularities for the equation \eqref{schrodingerequation} in the functional framework of the Gelfand--Shilov test function spaces and their dual distribution spaces. 
More precisely we use the projective limit (Beurling type) Gelfand--Shilov space $\Sigma_s(\rr d)$ for $s>1/2$ that consists of smooth functions satisfying
\begin{equation*}
\forall h>0 \ \ \exists C_h>0 : \quad |x^\alpha \pd \beta f(x)| \leqslant C_h h^{|\alpha+\beta|} (\alpha! \beta!)^s, \quad x \in \rr d, \quad \alpha,\beta \in \nn d. 
\end{equation*}
This means that $\Sigma_s(\rr d)$ is smaller than the Schwartz space, and hence its dual $\Sigma_s'(\rr d)$ is a space of distributions that contains the tempered distributions. 

The natural concept of phase space singularities in the realm of Gelfand--Shilov spaces is the $s$-Gelfand--Shilov wave front set.
The idea was introduced by H\"ormander \cite{Hormander1} under the name analytic wave front set for tempered distributions, and further developed by Cappiello and Schulz \cite{Cappiello1} and Cordero, Nicola and Rodino \cite{Cordero6} for Gelfand--Shilov distributions. These authors had an approach based on the inductive limit Gelfand--Shilov spaces as opposed to our concept that is based on the projective limit spaces. 

A concept similar to the $s$-Gelfand--Shilov wave front set has been studied by Mizuhara \cite{Mizuhara1}. 
This is the homogeneous wave front set of Gevrey order $s>1$. It is included in the $s$-Gelfand--Shilov wave front set. 
Propagation results for the homogeneous Gevrey wave front set are proved in \cite{Mizuhara1} for Schr\"odinger equations, albeit of a different type than ours. 

In this paper the $s$-Gelfand--Shilov wave front set $WF^s(u) \subseteq T^* \rr d \setminus \{0\}$ of $u \in \Sigma_s'(\rr d)$ for $s>1/2$
is defined as follows: 
$WF^s(u)$ is the complement in $T^* \rr d \setminus \{0\}$ of the set of $z_0 \in T^* \rr d \setminus \{0\}$ such that there exists an open conic set $\Gamma \subseteq T^* \rr d \setminus \{ 0 \}$ containing $z_0$, and
\begin{equation*}
\forall A>0: \ \sup_{z \in \Gamma} e^{A |z|^{1/s}} |V_\fy u(z) | < \infty.
\end{equation*}
Here $V_\fy u$ denotes the short-time Fourier transform defined by $\fy \in \Sigma_s (\rr d) \setminus \{0\}$. 
Thus the short-time Fourier transform decays like $e^{-A |z|^{1/s}}$ for any $A>0$ in an open cone around $z_0$. 
Note that this means that the decay can be close to but not quite like a Gaussian $e^{-A |z|^2}$, due to our assumption $s>1/2$. 

For a tempered distribution, the $s$-Gelfand--Shilov wave front set contains the Gabor wave front set, and thus gives an enlarged notion of singularity. 

Our main result on propagation of singularities for Schr\"odinger type equations goes as follows, where $e^{-t q^w(x,D)}$ denotes the solution operator (propagator) of the equation \eqref{schrodingerequation}.
Let $q$ be the quadratic form on $T^*\rr d$ defined by 
$q(x,\xi) = \la (x,\xi), Q(x,\xi) \ra$ 
and a symmetric matrix $Q \in \cc {2d \times 2d}$, $\re Q \geqslant 0$, $F=\J Q$
where 
\begin{equation}\label{Jdef}
\J =
\left(
\begin{array}{cc}
0 & I \\
-I & 0
\end{array}
\right) \in \rr {2d \times 2d}
\end{equation}
and $s > 1/2$. 
Then for $u_0 \in \Sigma_s'(\rr d)$
\begin{align*}
WF^s (e^{-t q^w(x,D)}u_0) 
& \subseteq  \left( e^{2 t \im F} \left( WF^s (u_0) \cap S \right) \right) \cap S, \quad t > 0,  
\end{align*}
where $S$ is the singular space
\begin{equation*}
S=\Big(\bigcap_{j=0}^{2d-1} \Ker\big[\re F(\im F)^j \big]\Big) \cap T^*\rr d \subseteq T^*\rr d 
\end{equation*}
of the quadratic form $q$. 
This result is verbatim the same as \cite[Theorem~5.2]{Rodino2} when $u_0$ is restricted to be a tempered distribution and when the $s$-Gelfand--Shilov wave front set is replaced by the Gabor wave front set (cf. \cite[Corollary~4.6]{Wahlberg1}). 
Thus it gives a new manifestation of the importance of the singular space for propagation of phase space singularities for the considered class of equations of Schr\"odinger type. 

The singular space has attracted much attention recently and occurs in several works on 
spectral and hypoelliptic properties of non-elliptic quadratic operators
\cite{Hitrik1,Hitrik2,Hitrik3,Pravda-Starov1,Pravda-Starov2,Viola1,Viola2}. 

The paper is organized as follows. 
Section \ref{sec:prelim} contains notations and background material on Gelfand--Shilov spaces, pseudodifferential operators, the short-time Fourier transform and the Gabor wave front set. 
Section \ref{sec:seminorm} gives a comprehensive discussion on three alternative families of seminorms for the projective Gelfand--Shilov spaces. 

In Section \ref{sec:defprop} we define the $s$-Gelfand--Shilov wave front set and deduce some properties: independence of the window function, symplectic invariance, behavior under tensor product and composition with surjective matrices, and microlocality with respect to pseudodifferential operators with certain symbols. In this process we show the continuity of metaplectic operators on $\Sigma_s(\rr d)$ and on $\Sigma_s'(\rr d)$.

Section \ref{sec:formulation} gives a brief discussion on the solution operator to the equation \eqref{schrodingerequation}, that is formulated for $u_0 \in L^2(\rr d)$ by means of semigroup theory. 
Exact propagation results are given for the case $\re Q = 0$. 
Section \ref{sec:proplinop} treats propagation of the $s$-Gelfand--Shilov wave front set for a class on linear operators, continuous on $\Sigma_s(\rr d)$ and uniquely extendible to continuous operators on $\Sigma_s'(\rr d)$.

In Section \ref{sec:oscint} we discuss shortly H\"ormander's oscillatory integrals with quadratic phase function, and 
we prove the inclusion of the $s$-Gelfand--Shilov wave front set of such an oscillatory integral in the intersection of its corresponding positive Lagrangian in $T^* \cc d$  with $T^* \rr d$. 
Section \ref{sec:kernelschrod} gives an account of H\"ormander's description of the Schwartz kernel of the propagator as an oscillatory integral and we show the continuity of the propagator on $\Sigma_s(\rr d)$ and on $\Sigma_s'(\rr d)$.

Finally in Section \ref{sec:propsing} we assemble the results of Sections \ref{sec:proplinop}, \ref{sec:oscint} and \ref{sec:kernelschrod} to prove our results on propagation of the $s$-Gelfand--Shilov wave front set for equations of the form \eqref{schrodingerequation}.

%%%%%%%%%%%%%%%%%%%%%%%
\section{Preliminaries}\label{sec:prelim}
%%%%%%%%%%%%%%%%%%%%%%%

\subsection{Notations and basic definitions}\label{subsec:notations}

The gradient of a function $f$ with respect to the variable $x \in \rr d$ is denoted by $f'_x$ and
the mixed Hessian matrix $(\partial_{x_i} \partial_{y_j} f)_{i,j}$ with respect to $x \in \rr d$ and $y \in \rr n$ is denoted $f_{x y}''$. 
The Fourier transform of $f \in \cS(\rr d)$ (the Schwartz space) is normalized as
\begin{equation*}
\mathscr{F} f(\xi) = \wh f(\xi) = \int_{\rr d} f(x) e^{- i \la x, \xi \ra} dx,
\end{equation*}
where $\la x, \xi \ra$ denotes the inner product on $\rr d$. The topological dual of $\mathscr S(\rr d)$ is the space of tempered distributions $\mathscr S'(\rr d)$.
As conventional $D_j = -i \partial_j$ for $1 \leqslant j \leqslant d$. 

We will make frequent use of the inequality
\begin{equation*}
|x+y|^{1/s} \leqslant C_s ( |x|^{1/s} + |y|^{1/s}), \quad x,y \in \rr d, 
\end{equation*}
where 
\begin{equation*}
C_s = 
\left\{
\begin{array}{ll}
1 & \mbox{if} \ s \geqslant 1 \\
2^{1/s-1} & \mbox{if} \ 0 < s < 1
\end{array}
\right. .
\end{equation*}
Since this inequality will be used only for $s>1/2$ we may use the cruder estimate $C_s=2$, which leads to the inequalities
\begin{align}
e^{A |x+y|^{1/s} } & \leqslant e^{2A |x|^{1/s}} e^{2A |y|^{1/s}}, \quad A >0, \quad x,y \in \rr d, \label{exppeetre1} \\
e^{- A |x+y|^{1/s} } & \leqslant e^{- \frac{A}{2} |x|^{1/s}} e^{A |y|^{1/s}}, \quad A >0, \quad x,y \in \rr d. \label{exppeetre2}
\end{align}

The Japanese bracket is $\eabs{x} = (1+|x|^2)^{1/2}$. 
For a positive measurable weight function $\omega$ defined on $\rr d$, 
the Banach space $L_{\omega}^1(\rr d)$ is endowed with the norm $\| f \|_{L_{\omega}^1}= \| f \omega \|_{L^1}$. 
The unit sphere in $\rr d$ is denoted $S_{d-1} = \{ x \in \rr d: \ |x|=1 \}$. 
For a matrix $A \in \rr {d \times d}$, $A \geqslant 0$  means that $A$ is positive semidefinite, and $A^t$ is the transpose. 
If $A$ is invertible then $A^{-t}$ denotes the inverse transpose. 
In estimates the notation $f (x) \lesssim g (x)$ understands that $f(x) \leqslant C g(x)$ holds for some constant $C>0$ that is uniform for all $x$ in the domain of $f$ and $g$. 
If $f(x) \lesssim g(x) \lesssim f(x)$ then we write $f(x) \asymp g(x)$. 

We denote the translation operator by $T_x f(y)=f(y-x)$, the modulation operator by $M_\xi f(y)=e^{i \la y, \xi \ra} f(y)$, $x,y,\xi \in \rr d$, and the
phase space translation operator by $\Pi(z) = M_\xi T_x$, $z=(x,\xi) \in \rr {2d}$.

\subsection{Gelfand--Shilov spaces}\label{subsec:GelfandShilov}

Let $h,s,t >0$. The space $\mathcal S_{t,h}^s(\rr d)$
is defined as all $f\in C^\infty (\rr d)$ such that
\begin{equation}\label{gfseminorm}
\nm f{\mathcal S_{t,h}^s}\equiv \sup \frac {|x^\alpha \partial ^\beta
f(x)|}{h^{|\alpha +\beta |}\alpha !^s\, \beta !^t}
\end{equation}
is finite. The supremum refers to all $\alpha ,\beta \in
\mathbf N^d$ and $x\in \rr d$. 
We set $\mathcal S_{s,h}=\mathcal S_{s,h}^s$.

The Banach space $\mathcal S_{t,h}^s$ increases with $h$, $s$ and $t$, 
and the embedding $\mathcal S_{t,h}^s\subseteq \cS$ holds for all $h,s,t >0$. 
If $s,t>1/2$, or $s=t =1/2$ and $h$ is sufficiently large, then $\mathcal
S_{t,h}^s$ contains all finite linear combinations of Hermite functions.

The \emph{Gelfand--Shilov spaces} $\mathcal S_{t}^s(\rr d)$ and
$\Sigma _{t}^s(\rr d)$ are the inductive and projective limits respectively
of $\mathcal S_{t,h}^s(\rr d)$ with respect to $h>0$. This means on the one hand
\begin{equation*}
\mathcal S_{t}^s(\rr d) = \bigcup _{h>0}\mathcal S_{t,h}^s(\rr d)
\quad \text{and}\quad \Sigma _{t}^s(\rr d) =\bigcap _{h>0}\mathcal S_{t,h}^s(\rr d).
\end{equation*}
On the other hand it means that the topology for $\mathcal S_{t}^s(\rr d)$ is the strongest topology such
that the inclusion $\mathcal S_{t,h}^s(\rr d) \subseteq \mathcal S_{t}^s(\rr d)$
is continuous for each $h>0$, 
and the topology for $\Sigma_{t}^s(\rr d)$ is the weakest topology such that 
the inclusion $\Sigma_{t}^s(\rr d) \subseteq \mathcal S_{t,h}^s(\rr d)$
is continuous for each $h>0$. 

The space $\Sigma _t^s(\rr d)$ is a Fr\'echet space with seminorms
$\| \cdot \|_{\mathcal S_{s,h}^t}$, $h>0$. 
The Gelfand--Shilov spaces are invariant under translation, modulation, dilation, linear coordinate transformations and tensor products. 
It holds $\Sigma _t^s(\rr d)\neq \{ 0\}$ if and only if $s,t>0$, $s+t \geqslant 1$ and $(s,t) \neq (1/2,1/2)$. 
We set $\mathcal S_{s}=\mathcal S_{s}^s$ and $\Sigma _{s}=\Sigma _{s}^s$.
Then $\mathcal S_s(\rr d)$ is zero when $s<1/2$, and $\Sigma _s(\rr d)$ is zero
when $s \le 1/2$. 
From now on we assume that $s>1/2$ when considering $\Sigma _s(\rr d)$.

The \emph{Gelfand--Shilov distribution spaces} $(\mathcal S_{t}^s)'(\rr d)$
and $(\Sigma _{t}^s)'(\rr d)$ are the projective and inductive limits
respectively of $(\mathcal S_{t,h}^s)'(\rr d)$.  This implies that
\begin{equation*}
(\mathcal S_{t}^s)'(\rr d) = \bigcap _{h>0}(\mathcal S_{t,h}^s)'(\rr d)\quad
\text{and}\quad (\Sigma _{t}^s)'(\rr d) =\bigcup _{h>0} (\mathcal S_{t,h}^s)'(\rr d).
\end{equation*}
The space $(\mathcal S_{t}^s)'(\rr d)$
is the topological dual of $\mathcal S_{t}^s(\rr d)$, and if $s>1/2$ then $(\Sigma _{t}^s)'(\rr d)$
is the topological dual of $\Sigma _{t}^s(\rr d)$ \cite{Gelfand1}.

In this paper we work with the spaces $\Sigma _s(\rr d)$ and $\Sigma _s'(\rr d) = (\Sigma _s^s)'(\rr d)$ for $s>1/2$. 
These spaces are embedded with respect to the Schwartz space and the tempered distributions as
\begin{equation*}
\Sigma _s(\rr d) \subseteq \cS(\rr d) \subseteq \cS'(\rr d) \subseteq \Sigma_s'(\rr d), \quad s > 1/2. 
\end{equation*}

For $s>1/2$ the (partial) Fourier transform extends 
uniquely to homeomorphisms on $\mathscr S'(\rr d)$, $\mathcal S_s'(\rr d)$
and $\Sigma _s'(\rr d)$, and restricts to 
homeomorphisms on $\mathscr S(\rr d)$, $\mathcal S_s(\rr d)$ and $\Sigma _s(\rr d)$.

\subsection{Pseudodifferential operators and the Gabor wave front set}\label{subset:pseudo}

Let $s>1/2$.
Given a window function $\fy \in \Sigma_s(\rr d) \setminus \{ 0 \}$, the short-time Fourier transform (STFT) \cite{Grochenig1} of 
$u \in \Sigma_s'(\rr d)$ is defined by
\begin{equation*}
V_\varphi u(x,\xi) = ( u,  M_\xi T_x \varphi ) = \cF(u \, T_x \overline{\fy}) (\xi), \quad x, \, \xi \in \rr d,
\end{equation*}
where $(\cdot,\cdot)$ denotes the conjugate linear action of $\Sigma_s'$ on $\Sigma_s$,
consistent with the inner product $(\cdot,\cdot)_{L^2}$ which is conjugate linear in the second argument.
The function $\rr {2d} \ni z \rightarrow V_\varphi u(z)$ is smooth.

Let $\fy,\psi \in \Sigma_s(\rr d) \setminus 0$.
By \cite[Theorem~2.5]{Toft1} we have 
\begin{equation}\label{STFTgrowth}
\forall u \in \Sigma_s'(\rr d) \quad \exists M \geqslant 0: \quad  |V_\varphi u (z)| \lesssim e^{M |z|^{1/s} }, \quad z \in \rr {2d}, 
\end{equation}
and by \cite[Lemma~11.3.3]{Grochenig1} we have 
\begin{equation}\label{STFTconvolution}
|V_\psi u(z)| \leqslant (2 \pi)^{-d} \| \fy \|_{L^2} |V_\fy u| * |V_\psi \fy| (z), \quad z \in \rr {2d},  
\end{equation}
where $V_\psi \fy \in \Sigma_s(\rr {2d})$. 

If $\fy \in \Sigma_s (\rr d)$ and $\| \fy \|_{L^2}=1$, 
the STFT inversion formula \cite[Corollary~11.2.7]{Grochenig1} reads
\begin{equation}\label{STFTrecon2}
(f,g ) = (2 \pi)^{-d} \int_{\rr {2d}} V_\varphi f(z) \, \overline{V_\varphi g(z)} \, dz, \quad f \in \Sigma_s'(\rr d),  \quad g \in \Sigma_s(\rr d).
\end{equation}

The Weyl quantization of pseudodifferential operators (cf. \cite{Folland1,Hormander0,Shubin1}) is the map from symbols $a \in \mathscr S(\rr {2d})$ to operators acting on $f \in \mathscr S(\rr d)$ defined by
\begin{equation}\nonumber
a^w(x,D) f(x) = (2 \pi)^{-d} \iint_{\rr {2d}} e^{i \la x-y,\xi \ra} a \left( \frac{x+y}{2},\xi \right)  \, f(y) \, dy \, d \xi.
\end{equation}
The conditions on $a$ and $f$ can be modified and relaxed in various ways.
The Weyl quantization can be formulated in the framework of Gelfand--Shilov spaces \cite{Cappiello2}. 
For certain symbols the operator $a^w(x,D)$ acts continuously on $\Sigma_s(\rr d)$ when $s>1/2$. 

If $a \in \Sigma_s'(\rr {2d})$ the Weyl quantization extends a continuous operator $\Sigma_s(\rr d) \rightarrow \Sigma_s'(\rr d)$
that satisfies
\begin{equation*}
(a^w(x,D) f, g) = (2 \pi)^{-d} (a, W(g,f) ), \quad f, g \in \Sigma_s(\rr d), 
\end{equation*}
where the cross-Wigner distribution is defined as 
\begin{equation*}
W(g,f) (x,\xi) = \int_{\rr d} g (x+y/2) \overline{f(x-y/2)} e^{- i \la y, \xi \ra} dy, \quad (x,\xi) \in \rr {2d}. 
\end{equation*}
We have $W(g,f) \in \Sigma_s(\rr {2d})$ when $f,g \in \Sigma_s(\rr d)$. 

We need the following symbol classes for pseudodifferential operators that act on $\cS(\rr d)$ in order to define the Gabor wave front set and explain its properties.

\begin{defn}\label{shubinclasses1}\cite{Shubin1}
For $m\in \ro$ the Shubin symbol class $G^m$ is the subspace of all
$a \in C^\infty(\rr {2d})$ such that for every
$\alpha,\beta \in \nn d$ 
\begin{equation*}
|\partial_x^\alpha \partial_\xi^\beta a(x,\xi)| 
\lesssim \langle (x,\xi) \rangle^{m-|\alpha|-|\beta|}, \quad (x,\xi)\in \rr {2d}. 
\end{equation*}
\end{defn}

\begin{defn}\label{hormanderclasses}\cite{Hormander0}
For $m\in \ro$, $0 \leqslant \rho \leqslant 1$, $0 \leqslant \delta < 1$, the H\"ormander symbol class $S_{\rho,\delta}^m$ is the subspace of all
$a \in C^\infty(\rr {2d})$ such that for every
$\alpha,\beta \in \nn d$ 
\begin{equation}\label{symbolestimate2}
|\partial_x^\alpha \partial_\xi^\beta a(x,\xi)| 
\lesssim \eabs{\xi}^{m - \rho|\beta| + \delta |\alpha|}, \quad (x,\xi)\in \rr {2d}.
\end{equation}
\end{defn}

Both $G^m$ and $S_{\rho,\delta}^m$ are Fr\'echet spaces with respect to their naturally defined seminorms. 

The following definition involves conic sets in the phase space $T^* \rr d \setminus 0 \simeq \rr {2d} \setminus 0$. 
A set is conic if it is invariant under multiplication with positive reals. 
Note the difference to the frequency-conic sets that are used in the definition of the (classical) $C^\infty$ wave front set \cite{Hormander0}. 

\begin{defn}\label{noncharacteristic2}
Given $a \in G^m$, a point $z_0 \in T^* \rr d \setminus 0$ is called non-characteristic for $a$ provided there exist $A,\ep>0$ and an open conic set $\Gamma \subseteq T^* \rr d \setminus 0$ such that $z_0 \in \Gamma$ and
\begin{equation*}
|a(z )| \geqslant \ep \eabs{z}^m, \quad z \in \Gamma, \quad |z| \geqslant A.
\end{equation*}
\end{defn}

The Gabor wave front set is defined as follows where $\charac (a)$ is the complement in $T^* \rr d \setminus 0$ of the set of non-characteristic points for $a$. 

\begin{defn}\label{wavefront1}
\cite{Hormander1}
If $u \in \mathscr S'(\rr d)$ then the Gabor wave front set $WF(u)$ is the set of all $z \in T^*\rr d \setminus 0$ such that $a \in G^m$ for some $m \in \ro$ and $a^w(x,D) u \in \mathscr S$ implies $z \in \charac(a)$.
\end{defn}

According to \cite[Proposition 6.8]{Hormander1} and \cite[Corollary 4.3]{Rodino1}, the Gabor wave front set can be characterized microlocally by means of the STFT as follows. 
If $u \in \mathscr S'(\rr d)$ and $\varphi \in \mathscr S(\rr d) \setminus 0$ then $z_0 \in T^*\rr d \setminus 0$ satisfies $z_0 \notin WF(u)$ if and only if there exists an open conic set $\Gamma_{z_0} \subseteq T^*\rr d \setminus 0$ containing $z_0$ such that
\begin{equation}\label{WFchar}
\sup_{z \in \Gamma_{z_0}} \eabs{z}^N |V_\varphi u(z)| < \infty \quad \forall N \geqslant 0.
\end{equation}

The most important properties of the Gabor wave front set include the following facts. 
Here the microsupport $\mu \supp (a)$ of $a \in G^m$ is defined as follows (cf. \cite{Schulz1}). 
For $z_0 \in T^* \rr d \setminus 0$ we have $z_0 \notin \mu \supp (a)$ if there exists an open cone $\Gamma \subseteq T^* \rr d \setminus 0$ containing $z_0$ such that 
\begin{equation*}
\sup_{z \in \Gamma} \eabs{z}^N |\pd \alpha a (z)| < \infty, \quad \alpha \in \nn {2d}, \quad N \geqslant 0. 
\end{equation*}
\begin{enumerate}

\item If $u \in \cS'(\rr d)$ then $WF(u) = \emptyset$ if and only if $u \in \cS (\rr d)$ \cite[Proposition 2.4]{Hormander1}.

\item If $u \in \mathscr S'(\rr d)$ and $a \in G^m$ then
\begin{align*}
WF( a^w(x,D) u) 
& \subseteq WF(u) \cap \mu \supp (a) \\ 
& \subseteq WF( a^w(x,D) u) \ \bigcup \ \charac (a). 
\end{align*}

\item If $a \in S_{0,0}^0$ and $u \in \cS'(\rr d)$ then by \cite[Theorem 5.1]{Rodino1}
\begin{equation}\label{microlocal2}
WF(a^w(x,D) u) \subseteq WF(u).
\end{equation}
In particular $WF(\Pi(z) u) = WF(u)$ for any $z \in \rr {2d}$.  

\end{enumerate}

As three basic examples of the Gabor wave front set we mention (cf. \cite[Examples~6.4--6.6]{Rodino1})
\begin{equation}\label{example1}
WF(\delta_x) = \{ 0 \} \times (\rr d \setminus 0 ), \quad x \in \rr d,  
\end{equation}
\begin{equation*}
WF(e^{i \la \cdot,\xi \ra}) = (\rr d \setminus 0) \times  \{ 0 \}, \quad \xi \in \rr d, 
\end{equation*}
and 
\begin{equation*}
WF(e^{i \la x, A x \ra/2 } ) = \{ (x, Ax): \, x \in \rr d \setminus 0 \}, \quad A \in \rr {d \times d} \quad \mbox{symmetric}. 
\end{equation*}

The canonical symplectic form on $T^* \rr d$ is
\begin{equation*}
\sigma((x,\xi), (x',\xi')) = \la x' , \xi \ra - \la x, \xi' \ra, \quad (x,\xi), (x',\xi') \in T^* \rr d.
\end{equation*}
With the matrix \eqref{Jdef} this can be expressed with the inner product on $\rr {2d}$ as
\begin{equation*}
\sigma((x,\xi), (x',\xi')) = \la \J (x,\xi), (x',\xi') \ra, \quad (x,\xi), (x',\xi') \in T^* \rr d. 
\end{equation*}

To each symplectic matrix $\chi \in \Sp(d,\ro)$ is associated an operator $\mu(\chi)$ that is unitary on $L^2(\rr d)$, and determined up to a complex factor of modulus one, such that
\begin{equation}\label{symplecticoperator}
\mu(\chi)^{-1} a^w(x,D) \, \mu(\chi) = (a \circ \chi)^w(x,D), \quad a \in \cS'(\rr {2d})
\end{equation}
(cf. \cite{Folland1,Hormander0}).
The operator $\mu(\chi)$ is a homeomorphism on $\mathscr S$ and on $\mathscr S'$.

The mapping $\Sp(d,\ro) \ni \chi \rightarrow \mu(\chi)$ is called the \emph{metaplectic representation} \cite{Folland1,Taylor1}.
It is in fact a representation of the so called $2$-fold covering group of $\Sp(d,\ro)$, which is called the metaplectic group 
and denoted $\Mp(d,\ro)$.
The metaplectic representation satisfies the homomorphism relation modulo a change of sign:
\begin{equation*}
\mu( \chi \chi') = \pm \mu(\chi ) \mu(\chi' ), \quad \chi, \chi' \in \Sp(d,\ro).
\end{equation*}

According to \cite[Proposition~2.2]{Hormander1} the Gabor wave front set is symplectically invariant as 
\begin{equation*}
WF( \mu(\chi) u) = \chi WF(u), \quad \chi \in \Sp(d, \ro), \quad u \in \cS'(\rr d).
\end{equation*}

The work \cite{Cordero5} contains a study of the propagation of the Gabor wave front set for linear Schr\"odinger equations, and \cite{Nicola2,Nicola1} 
contain studies of the same question for semilinear Schr\"odinger-type equations. 

%%%%%%%%%%%%%%%%%%%%%%%%%%%%%%%%%%%%
\section{Seminorms on Gelfand--Shilov spaces}\label{sec:seminorm}
%%%%%%%%%%%%%%%%%%%%%%%%%%%%%%%%%%%%

We need to work with several families of seminorms on $\Sigma_s(\rr d)$ for $s>1/2$ apart from the seminorms defined by \eqref{gfseminorm}.
The next result shows that there are three families of seminorms for $\Sigma _s(\rr d)$ that are each equivalent to the family of seminorms $\{\| f \|_{\mathcal S_{s,h}}, \ h > 0 \}$ defined by \eqref{gfseminorm}. 
The three families of seminorms are firstly $\{\| f \|_A', \ \| \wh f \|_B', \ A,B>0 \}$ where 
\begin{equation}\label{seminorm1}
\| f \|_A' = \sup_{x \in \rr d} e^{A |x|^{1/s}} |f(x)|, 
\end{equation}
secondly $\{| f |_A, \ A>0 \}$ where 
\begin{equation}\label{seminorm3}
| f |_A = \sup_{x \in \rr d, \ \beta \in \nn d} \frac{A^{|\beta|} e^{A |x|^{1/s}} |D^\beta f(x)|}{(\beta!)^s}, 
\end{equation}
and thirdly $\{\| f \|_A'', \ A>0 \}$ where 
\begin{equation}\label{seminorm2}
\| f \|_A'' = \sup_{z \in \rr {2d}} e^{A |z|^{1/s}} |V_\fy f(z)|, 
\end{equation}
for $\fy \in \Sigma_s(\rr d) \setminus 0$ fixed. 
It will turn out that the choice of $\fy \in \Sigma_s(\rr d) \setminus 0$  in the definition of the seminorms $\{\| f \|_A'', \ A>0 \}$ is arbitrary (see the proof of Proposition \ref{seminormequivalence}).

The essential arguments in the proof of the following proposition can be found in several places, e.g. \cite{Gelfand1,Grochenig3,Chung1,Nicola3,Toft2}. 
Nevertheless we prefer to give a detailed account since it is a cornerstone for our results, and in order to give a self-contained narrative. 

\begin{prop}\label{seminormequivalence}
Let $s>1/2$. Then 
\begin{equation}\label{seminorm2a}
\forall A,B>0 \quad \exists h>0: \ \| f \|_A' + \| \wh f \|_B' \lesssim \| f \|_{\mathcal S_{s,h}}, \quad f \in \Sigma_s(\rr d), 
\end{equation}
and
\begin{equation}\label{seminorm2b}
\forall h>0 \quad \exists A,B>0: \ \| f \|_{\mathcal S_{s,h}} \lesssim \| f \|_A' + \| \wh f \|_B' ,  \quad f \in \Sigma_s(\rr d). 
\end{equation}
Likewise 
\begin{equation}\label{seminorm4a}
\forall A>0 \quad \exists h>0: \ | f |_A \lesssim \| f \|_{\mathcal S_{s,h}},  \quad f \in \Sigma_s(\rr d), 
\end{equation}
and 
\begin{equation}\label{seminorm4b}
\forall h>0 \quad \exists A>0: \ \| f \|_{\mathcal S_{s,h}} \lesssim | f |_A ,  \quad f \in \Sigma_s(\rr d). 
\end{equation}
Finally
\begin{equation}\label{seminorm3a}
\forall A>0 \quad \exists h>0: \ \| f \|_A'' \lesssim \| f \|_{\mathcal S_{s,h}},  \quad f \in \Sigma_s(\rr d), 
\end{equation}
and 
\begin{equation}\label{seminorm3b}
\forall h>0 \quad \exists A>0: \ \| f \|_{\mathcal S_{s,h}} \lesssim \| f \|_A'',  \quad f \in \Sigma_s(\rr d). 
\end{equation}
\end{prop}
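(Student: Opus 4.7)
The plan is to treat the three pairs of inequalities in sequence, reducing each to the standard Gelfand--Shilov seminorms through a common optimization scheme. The recurring technical ingredient is the elementary dictionary that, for any $h,s>0$,
\begin{equation*}
\sup_{N \geqslant 0} \frac{t^N h^N}{(N!)^s} \asymp e^{c_{h,s}\, t^{1/s}}, \quad t \geqslant 0,
\end{equation*}
with $c_{h,s}>0$ small when $h$ is small (via Stirling). This converts factorial growth into exponential decay and back, and both directions will be used throughout.

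For the first two pairs I would work directly from the Gelfand--Shilov bound $|x^\alpha D^\beta f(x)| \lesssim h^{|\alpha|+|\beta|}(\alpha!)^s(\beta!)^s$. To prove \eqref{seminorm2a}, set $\beta=0$ and apply the bound coordinatewise to get $|x|^N|f(x)| \lesssim (hd)^N(N!)^s$; optimization in $N$ then yields $|f(x)| \lesssim e^{-A|x|^{1/s}}$ for any prescribed $A>0$ provided $h$ is chosen small enough. The bound on $\wh f$ follows because $\xi^\beta \wh f(\xi)= \widehat{D^\beta f}(\xi)$, and $\|D^\beta f\|_{L^1}$ inherits Gelfand--Shilov bounds after absorbing an $\eabs{x}^{-d-1}$ weight with help of the $x^\alpha$ factors in \eqref{gfseminorm}. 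Conversely, for \eqref{seminorm2b}, represent $x^\alpha \pd{\beta} f(x)$ via Fourier inversion with further integrations by parts as an integral in $\wh f$; the exponential decay of $\wh f$ and $f$ is converted into the required factorial growth by the dictionary. For \eqref{seminorm4a} and \eqref{seminorm4b}, one repeats the same argument while keeping the $\beta$-derivative explicit: fixing $\beta$ and optimizing in $\alpha$ gives the desired bound on $D^\beta f$, and the converse is immediate once $|x|^{|\alpha|} e^{-A|x|^{1/s}}$ is reabsorbed into factorials via the dictionary.

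For \eqref{seminorm3a} and \eqref{seminorm3b} I would first settle window-independence using \eqref{STFTconvolution}: if $\fy,\psi \in \Sigma_s(\rr d) \setminus 0$ then $V_\psi \fy \in \Sigma_s(\rr {2d})$ decays exponentially, so the convolution combined with \eqref{exppeetre2} transfers exponential decay from $V_\fy f$ to $V_\psi f$ with a harmless loss of constant. For \eqref{seminorm3a}, use $V_\fy f(x,\xi) = \int f(y)\overline{\fy(y-x)}e^{-iy\xi}dy$: the $x$-decay comes from $|f(y)\fy(y-x)| \lesssim e^{-2A|y|^{1/s}}e^{-2A|y-x|^{1/s}} \leqslant C e^{-A|x|^{1/s}}$ via \eqref{exppeetre1}, while the $\xi$-decay follows from $\xi^\alpha V_\fy f(x,\xi) = \cF(D_y^\alpha(f \, T_x\overline{\fy}))(\xi)$, bounding each Leibniz term through the seminorm $|f|_A$ (established in the previous step) together with the Gelfand--Shilov estimates for $\overline{\fy}$, and then optimizing in $|\alpha|$. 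For \eqref{seminorm3b}, I would apply the inversion formula \eqref{STFTrecon2} with $\fy$ of unit $L^2$ norm; $x^\alpha \pd{\beta} f(x)$ then becomes an integral of $V_\fy f(z)$ against $x^\alpha \pd{\beta}(\Pi(z)\fy)(x)$, whose modulus is dominated by a Leibniz sum of factorial factors times polynomial-in-$z$ weights, and the exponential decay of $V_\fy f$ absorbs the polynomial growth, securing the Gelfand--Shilov bound.

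The main obstacle lies not in any single inequality but in tracking the precise dependence between the small parameter $h$ in $\|f\|_{\mathcal{S}_{s,h}}$ and the large parameter $A$ in the exponential-type seminorms: each conversion loses a multiplicative constant in the exponent, and preserving the projective-limit structure requires verifying that for any target $A$ one can choose $h$ small enough, and conversely. A secondary care point is the passage between $|\alpha|$-type bounds and coordinate-wise $|x_j|^N$ estimates, which uses repetitive inequalities of the form $|x|^{|\alpha|} \leqslant d^{|\alpha|/2}\max_j|x_j|^{|\alpha|}$. I would isolate both issues in a brief optimization lemma at the outset in order to keep the body of the proof streamlined.
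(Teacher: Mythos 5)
Most of your plan coincides with the paper's proof: the series/optimization ``dictionary'' for \eqref{seminorm2a}, \eqref{seminorm4a}, \eqref{seminorm4b}, the convolution relation \eqref{STFTconvolution} for window independence, and the STFT inversion formula for \eqref{seminorm3b} are exactly the tools used there. Your route to \eqref{seminorm3a} (getting $\xi$-decay from $\xi^\alpha V_\fy f=\cF(D^\alpha(f\,T_x\overline\fy))$ and the seminorm $|f|_A$, rather than from the convolution $\wh f * \wh{T_x\overline{\fy}}$ as in the paper) is a harmless variant.

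The genuine gap is \eqref{seminorm2b}, which is the hardest part of the proposition, and your one-sentence treatment of it does not work as described. The seminorms $\|f\|'_A$ and $\|\wh f\|'_B$ control only the \emph{pointwise decay} of $f$ and of $\wh f$ separately; neither contains any derivative information. If you write $x^\alpha D^\beta f$ by Fourier inversion you face $\widehat{x^\alpha D^\beta f}(\xi)=(i\partial_\xi)^\alpha\big(\xi^\beta\wh f(\xi)\big)$, which involves derivatives of $\wh f$ that $\|\wh f\|'_B$ does not control, and integrating by parts to remove them simply regenerates the factor $x^\alpha$ — the argument is circular. Some interpolation device is unavoidable to pass from separate control of $x^\alpha f$ and $D^\beta f$ to control of the mixed quantities $x^\alpha D^\beta f$. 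The paper (following Chung--Chung--Kim and Nicola--Rodino) does this in $L^2$: it writes $\| x^{\alpha-\mu} D^{\beta+\gamma-\mu} f\|_{L^2}^2=\big(f,\,D^{\beta+\gamma-\mu}(x^{2\alpha-2\mu}D^{\beta+\gamma-\mu}f)\big)$, applies Leibniz, and splits each term by Cauchy--Schwarz into $\|x^{2\alpha-2\mu-\kappa}f\|_{L^2}\,\|D^{2\beta+2\gamma-2\mu-\kappa}f\|_{L^2}$, each factor being controlled by $\|f\|'_A$ resp.\ $\|\wh f\|'_A$ via Parseval; careful bookkeeping of the combinatorial constants (where $s>1/2$ enters through $\kappa!=\kappa!^{2s-\delta}$) then yields \eqref{seminorm2b}. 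You must supply this step or an equivalent one. A viable alternative within your own framework would be to prove $\|f\|''_A\lesssim\|f\|'_{8A}+\|\wh f\|'_{8A}$ using only pointwise decay of $f$ and $\wh f$ (as the paper does in its proof of \eqref{seminorm3a}), prove $\|f\|_{\mathcal S_{s,h}}\lesssim\|f\|''_A$ directly from the inversion formula as you sketch for \eqref{seminorm3b}, and compose the two; but note that your current version of the first of these bounds invokes $|f|_A$, i.e.\ derivative information, so it would have to be reworked before it can close this loop.
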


\begin{proof}
We start with \eqref{seminorm2a}. 
Let $f \in \Sigma_s(\rr d)$. 
From \eqref{gfseminorm} we have for any $h>0$
\begin{equation*}
|x^\alpha D^\beta f(x)| \leqslant \| f \|_{\mathcal S_{s,h}} (\alpha! \beta! )^s h^{|\alpha+\beta|}, \quad \alpha, \beta \in \nn d, \quad x \in \rr d.
\end{equation*}
This gives for any $n \in \no$ and any $\beta \in \nn d$
\begin{equation}\label{alphabetaestimate}
|x|^n |D^\beta f(x)| 
\leqslant d^{n/2} \max_{|\alpha|=n} |x^\alpha D^\beta f(x)|
\leqslant d^{n/2}
\| f \|_{\mathcal S_{s,h}} (n! \beta!)^s h^{n+|\beta|}, \quad x \in \rr d, 
\end{equation}
which in turn gives with $\beta=0$ and $A=2^{-1}s (d^{1/2} h)^{-1/s}$
\begin{align*}
\exp\left(\frac{A}{s}|x|^{1/s}\right) |f(x)|^{1/s}
& = \sum_{n=0}^\infty \frac{|x|^{n/s} |f(x)|^{1/s} (d^{1/2}  h)^{-n/s}}{n!}  \left(\frac{A (d^{1/2}  h)^{1/s}}{s}\right)^n \\
& \leqslant \| f \|_{\mathcal S_{s,h}}^{1/s} \sum_{n=0}^\infty 2^{-n}, \quad x \in \rr d. 
\end{align*}
For any $A>0$ we thus have 
\begin{equation}\label{seminorm2a1}
\| f \|_A' \lesssim \| f \|_{\mathcal S_{s,h_1}}, \quad f \in \Sigma_s(\rr d), 
\end{equation}
if $h_1=(s/(2A))^{s} d^{-1/2}$. 

Since the Fourier transform is continuous on $\Sigma_s(\rr d)$ we get from \eqref{seminorm2a1}
for any $B>0$
\begin{equation}\label{seminorm2a2}
\| \wh f \|_B' \lesssim \| \wh f \|_{\mathcal S_{s,h_0}} \lesssim \| f \|_{\mathcal S_{s,h_2}}, \quad f \in \Sigma_s(\rr d), 
\end{equation}
for some $h_0, h_2>0$.  Addition of \eqref{seminorm2a1} and \eqref{seminorm2a2} proves \eqref{seminorm2a} for $h=\min(h_1,h_2)$.  

The second and longer argument of this proof serves to prove \eqref{seminorm2b}. The argument follows closely that of the proof of \cite[Theorem~6.1.6]{Nicola3}. For completeness' sake we give the full details. 

First we deduce two estimates that are needed. From \eqref{seminorm2a} it follows that $\| f \|_A' < \infty$ and 
$\| \wh f \|_B' < \infty$ for any $A,B>0$ when $f \in \Sigma_s(\rr d)$. 
Thus for any $A>0$ we have 
\begin{align*}
\sum_{n=0}^\infty \frac{|x|^{n/s} |f(x)|^{1/s}}{n!} \left(\frac{A}{s} \right)^n
& = \exp\left( \frac{A}{s} |x|^{1/s} \right) |f(x)|^{1/s} \leqslant (\| f \|_A')^{1/s}, \quad x \in \rr d, 
\end{align*}
which gives the estimate
\begin{equation*}
|x|^{n} |f(x)| 
\leqslant \| f \|_A' (n!)^s \left(\frac{s}{A} \right)^{sn}, \quad x \in \rr d, \quad n \in \no. 
\end{equation*}
Using $|\alpha|! \leqslant d^{|\alpha|} \alpha!$ (cf. \cite[Eq.~(0.3.3)]{Nicola3}) this gives in turn
\begin{equation*}
|x^\alpha f(x)| 
\leqslant \| f \|_A' (\alpha!)^s \left( \frac{ds}{A}\right)^{s|\alpha|}, \quad x \in \rr d, \quad \alpha \in \nn d. 
\end{equation*}

Finally we take the $L^2$ norm and estimate for an integer $k>d/4$ with $\ep=4k-d>0$: 
\begin{equation}\label{L2est1}
\begin{aligned}
\| x^\alpha f \|_{L^2} 
& \lesssim \sup_{x \in \rr d} \eabs{x}^{(d+\ep)/2} |x^\alpha f(x)|
\lesssim \sup_{x \in \rr d, \ |\gamma| \leqslant 2 k} |x^{\alpha+\gamma} f(x)| \\
& \leqslant \| f \|_A' ((\alpha+\gamma)!)^s  \left( \frac{ds}{A}\right)^{s|\alpha+\gamma|} \\
& \lesssim \| f \|_A' (\alpha!)^s \left( \frac{2 d s}{A}\right)^{s|\alpha|}, \quad \alpha \in \nn d, 
\end{aligned}
\end{equation}
using $(\alpha+\gamma)! \leqslant 2^{|\alpha+\gamma|} \alpha! \gamma!$ (cf. \cite{Nicola3}) and considering $k$ a fixed parameter. 

From \eqref{L2est1}, $\| \wh f \|_B' < \infty$ for any $B>0$, and Parseval's theorem we obtain
\begin{equation}\label{L2est2}
\| D^\beta f \|_{L^2} 
= (2 \pi)^{-d/2} \| \xi^\beta \wh f \|_{L^2} 
\lesssim \| \wh f \|_B' (\beta!)^s \left( \frac{2 d s}{B}\right)^{s|\beta|}, \quad \beta \in \nn d. 
\end{equation}

Since $\| f \|_{A}' \leqslant \| f \|_{A+A_0}'$ when $A_0\geqslant 0$ and $A>0$ we may use $B=A$ when we now set out to prove \eqref{seminorm2b}.  
It suffices to assume $h \leqslant 1$. 
We have for $\alpha,\beta \in \nn d$ arbitrary and $f \in \Sigma_s(\rr d)$, 
using the Cauchy--Schwarz inequality, Parseval's theorem and the Leibniz rule
\begin{equation}\label{intermediateestimate1}
\begin{aligned}
|x^\alpha D^\beta f(x)| 
& = (2\pi)^{-d} \left| \int_{\rr d} \widehat{x^\alpha D^\beta f} (\xi) e^{i \la x, \xi \ra} d \xi \right|
\lesssim \| \eabs{\cdot}^{(d+\ep)/2} \widehat{x^\alpha D^\beta f} \|_{L^2} \\
& \lesssim \max_{|\gamma| \leqslant 2k} \| D^\gamma (x^\alpha D^\beta f) \|_{L^2} \\
& \lesssim \max_{|\gamma| \leqslant 2k} \sum_{\mu \leqslant \min(\alpha,\gamma) } \binom{\gamma}{\mu} \binom{\alpha}{\mu} \mu! \| x^{\alpha-\mu} D^{\beta + \gamma-\mu} f \|_{L^2}, \quad x \in \rr d. 
\end{aligned}
\end{equation}

In an intermediate step we rewrite the expression for the $L^2$ norm squared using integration by parts and estimate it as
\begin{align*}
& \| x^{\alpha-\mu} D^{\beta + \gamma-\mu} f \|_{L^2}^2 \\
& = |(D^{\beta + \gamma-\mu} f , x^{2\alpha-2\mu} D^{\beta + \gamma-\mu} f )| \\
& = |(f , D^{\beta + \gamma-\mu}  (x^{2\alpha-2\mu} D^{\beta + \gamma-\mu} f) )| \\
& \leqslant \sum_{\kappa \leqslant \min(\beta+\gamma-\mu,2\alpha-2\mu)} \binom{\beta+\gamma-\mu}{\kappa} \binom{2\alpha-2\mu}{\kappa} \kappa! |(x^{2\alpha-2\mu-\kappa} f, D^{2\beta + 2\gamma-2\mu-\kappa} f )| \\ 
& \leqslant \sum_{\kappa \leqslant \min(\beta+\gamma-\mu,2\alpha-2\mu)} \binom{\beta+\gamma-\mu}{\kappa} \binom{2\alpha-2\mu}{\kappa} \kappa! \| x^{2\alpha-2\mu-\kappa} f \|_{L^2}  \| D^{2\beta + 2\gamma-2\mu-\kappa} f \|_{L^2}. 
\end{align*}
Setting $h = 2^{2s+5/2} (2d s/A)^s$ and using \eqref{L2est1}, \eqref{L2est2} and $\kappa! = \kappa!^{2s -\delta}$ where $\delta=2s-1>0$, we get
\begin{align*}
& \| x^{\alpha-\mu} D^{\beta + \gamma-\mu} f \|_{L^2}^2 \\
& \lesssim 2^{2 |\alpha+\beta|} \|  f \|_A'  \| \wh f \|_A' \sum_{\kappa \leqslant \min(\beta+\gamma-\mu,2\alpha-2\mu)} \kappa! 
 ((2\alpha-2\mu-\kappa)! (2\beta + 2\gamma-2\mu-\kappa)!)^s \\
& \qquad \qquad \qquad \qquad \qquad \qquad \qquad \qquad \times \left(2^{-5/2-2s} h \right)^{|2\alpha-4\mu-2\kappa + 2\beta + 2\gamma|} \\
& \leqslant (2^{-3-4s}h^2)^{|\alpha+\beta|} \|  f \|_A'  \| \wh f \|_A' \sum_{\kappa \leqslant \min(\beta+\gamma-\mu,2\alpha-2\mu)}
 (\kappa!)^{-\delta} ((2\alpha-2\mu)! (2\beta + 2\gamma-2\mu)!)^s \\
& \qquad \qquad \qquad \qquad \qquad \qquad \qquad \qquad \times \left(2^{-5/2-2s} h \right)^{|-4\mu-2\kappa+2\gamma|} \\
& \lesssim (2^{-3-4s} h^2)^{|\alpha+\beta|} \|  f \|_A'  \| \wh f \|_A' \sum_{\kappa \leqslant \min(\beta+\gamma-\mu,2\alpha-2\mu)}
 ((2\alpha-2\mu)! (2\beta + 2\gamma-2\mu)!)^s \\
 & \lesssim (2^{-2-2s} h^2)^{|\alpha+\beta|} \|  f \|_A'  \| \wh f \|_A'  ((2\alpha-2\mu)! (2\beta-2\mu)!)^s
\end{align*}
since we have assumed $h \leqslant 1$, since $|\mu| \leqslant 2 k$ which is a fixed constant, 
and since 
\begin{equation*}
(\kappa!)^{-\delta} \left( 2^{-5/2-2s} h \right)^{- 2 |\kappa|} \leqslant \exp\left( \delta d \left( h^{-1} 2^{5/2+2s} \right)^{2/\delta} \right), \quad \kappa \in \nn d.
\end{equation*}

We insert this into \eqref{intermediateestimate1} which gives, using $\mu! \leqslant \mu!^{2s}$ and 
$$
(2(\alpha-\mu))! \leqslant 2^{2|\alpha|} ((\alpha-\mu)!)^2,
$$
\begin{align*}
& |x^\alpha D^\beta f(x)| \\
& \lesssim (2^{-1-s} h)^{|\alpha+\beta|} (\|  f \|_A'  \| \wh f \|_A')^{1/2} \max_{|\gamma| \leqslant 2k} \sum_{\mu \leqslant \min(\alpha,\gamma) } \binom{\gamma}{\mu} \binom{\alpha}{\mu} \mu!  ((2\alpha-2\mu)! (2\beta-2\mu)!))^{s/2} \\
& \lesssim ( 2^{-1} h)^{|\alpha+\beta|} (\|  f \|_A'  \| \wh f \|_A')^{1/2} \max_{|\gamma| \leqslant 2k} \sum_{\mu \leqslant \min(\alpha,\gamma) } \binom{\gamma}{\mu} \binom{\alpha}{\mu} (\alpha! \beta!)^{s} \\
& \lesssim h^{|\alpha+\beta|} (\alpha! \beta!)^{s} (\|  f \|_A' + \| \wh f \|_A'), \quad x \in \rr d, \quad \alpha,\beta \in \nn d.
\end{align*}
This finally proves \eqref{seminorm2b}, since for any $h>0$ we may take $A = 2^{3+5/2s} h^{-1/s} d s$. 

Next we show \eqref{seminorm4a} and \eqref{seminorm4b}. We start with \eqref{seminorm4a}.

From \eqref{alphabetaestimate} it follows that we have for $A>0$
\begin{align*}
\exp\left(\frac{A}{s}|x|^{1/s}\right) |D^\beta f(x)|^{1/s}
& = \sum_{n=0}^\infty \frac{|x|^{n/s} |D^\beta f(x)|^{1/s} (d^{1/2}  h)^{-n/s}}{n!}  \left(\frac{A (d^{1/2}  h)^{1/s}}{s}\right)^n \\
& \leqslant \| f \|_{\mathcal S_{s,h}}^{1/s} \beta! h^{|\beta|/s} \sum_{n=0}^\infty 2^{-n}, \quad x \in \rr d, \quad \beta \in \nn d, 
\end{align*}
provided $A \leqslant 2^{-1}s (d^{1/2} h)^{-1/s}$. 
Thus 
\begin{equation*}
e^{A|x|^{1/s}} |D^\beta f(x)| 
\lesssim \| f \|_{\mathcal S_{s,h}} (\beta!)^s h^{|\beta|}, \quad x \in \rr d, \quad \beta \in \nn d, 
\end{equation*}
which gives 
\begin{equation*}
|f|_A \lesssim \| f \|_{\mathcal S_{s,h}}, \quad f \in \Sigma_s(\rr d), 
\end{equation*}
provided $h \leqslant \min(A^{-1}, (s/(2A))^s d^{-1/2})$. 
Hence we have proved \eqref{seminorm4a}. 

We continue with the proof of \eqref{seminorm4b}. 
From \eqref{seminorm4a} we know that $|f|_A < \infty$ for any $A>0$ when $f \in \Sigma_s(\rr d)$. 
Hence for any $A>0$, $\beta \in \nn d$ and $x \in \rr d$
\begin{align*}
\sum_{n=0}^\infty \frac{|x|^{n/s} |D^\beta f(x)|^{1/s}}{n!} \left(\frac{A}{s} \right)^n
& = \exp\left( \frac{A}{s} |x|^{1/s} \right) |D^\beta f(x)|^{1/s} \leqslant | f |_A^{1/s} \beta! A^{-|\beta|/s}, 
\end{align*}
which gives 
\begin{align*}
|x|^{n} |D^\beta f(x)| 
& \leqslant | f |_A (n! \beta!)^s A^{-|\beta|} \left( \frac{s}{A} \right)^{sn}, \quad n \in \no, \quad \beta \in \nn d, \quad x \in \rr d, 
\end{align*}
and thus
\begin{align*}
|x^\alpha D^\beta f(x)| 
& \leqslant | f |_A (\alpha! \beta!)^s A^{-|\beta|} \left( \frac{ds}{A} \right)^{s|\alpha|}, \quad \alpha, \beta \in \nn d, \quad x \in \rr d. 
\end{align*}
From this it follows that 
\begin{equation*}
\| f \|_{\mathcal S_{s,h}} \lesssim |f|_A  , \quad f \in \Sigma_s(\rr d), 
\end{equation*}
for any $h>0$ provided $A \geqslant \max(h^{-1}, s d h^{-1/s})$. This proves \eqref{seminorm4b}. 

It remains to show \eqref{seminorm3a} and \eqref{seminorm3b}. We start with \eqref{seminorm3a}. 

Let $A>0$ and $\fy \in \Sigma_s(\rr d) \setminus 0$. 
We have for $f \in \Sigma_s(\rr d)$
\begin{align*}
|V_\fy f (x,\xi)| 
& = |\wh{f T_x \overline{\fy}} (\xi)|
\lesssim |\wh{f}| * |\wh{T_x \overline{\fy}}| (\xi)
= \int_{\rr d} |\wh{f}(\xi-\eta)| \, |\wh{\fy}(-\eta)| \, d \eta \\
& \lesssim \| \wh f \|_{8A}' \int_{\rr d} \exp(-8A|\xi-\eta|^{1/s}) \, |\wh{\fy}(-\eta)| \, d \eta \\
& \lesssim \| \wh f \|_{8A}'  \exp(-4A|\xi|^{1/s}) \int_{\rr d} \exp(8A|\eta|^{1/s}) \, |\wh{\fy}(-\eta)| \, d \eta \\
& \lesssim \| \wh f \|_{8A}'  \exp(-4A|\xi|^{1/s}), \quad x, \xi \in \rr d, 
\end{align*}
using \eqref{exppeetre2} and \eqref{seminorm2a}. 
From this estimate and $|V_\fy f (x,\xi)| = (2 \pi)^{-d} |V_{\wh \fy} \wh f (\xi,-x)|$ we also obtain 
\begin{equation*}
|V_\fy f (x,\xi)|  
\lesssim \| f \|_{8A}'  \exp(-4A|x|^{1/s}), \quad x, \xi \in \rr d. 
\end{equation*}
With the aid of \eqref{exppeetre1} we may conclude
\begin{align*}
e^{2 A |(x,\xi)|^{1/s}} |V_\fy f (x,\xi)|^2
& \leqslant e^{4 A |x|^{1/s}} |V_\fy f (x,\xi)| \ e^{4 A |\xi|^{1/s}} |V_\fy f (x,\xi)| \\
& \lesssim \| f \|_{8A}'  \ \| \wh f \|_{8A}'
\end{align*}
which gives 
\begin{equation*}
\| f \|_A'' \lesssim (\| f \|_{8A}' \ \| \wh f \|_{8A}')^{1/2} \lesssim \| f \|_{8A}' + \| \wh f \|_{8A}'.
\end{equation*}
Combining with \eqref{seminorm2a} we have proved \eqref{seminorm3a}. 

We now show \eqref{seminorm3b}.
For that purpose we use the strong version of the STFT inversion formula \eqref{STFTrecon2} and its Fourier transform, that is
\begin{align}
f(x) & = (2 \pi)^{-d} \int_{\rr {2d}} V_\fy f(y,\eta) M_\eta T_y \fy (x) \, dy \, d \eta, \label{STFTinv1} \\
\wh f(\xi) & = (2 \pi)^{-d} \int_{\rr {2d}} V_\fy f(y,\eta) T_\eta M_{-y} \wh \fy (\xi) \, dy \, d \eta, \label{STFTinv2}
\end{align}
where $f \in \Sigma_s(\rr d)$ and $\fy \in \Sigma_s(\rr d)$ satisfies $\| \fy \|_{L^2} = 1$. 
From \eqref{STFTinv1} we obtain for any $A>0$ 
\begin{align*}
e^{A |x|^{1/s}} |f(x)| 
& \lesssim \int_{\rr {2d}} |V_\fy f(y,\eta)| \, e^{A |x|^{1/s}} |\fy (x-y)| \, dy \, d \eta, \\
& \lesssim \| f \|_{3A}''  \int_{\rr {2d}} e^{-3A|(y,\eta)|^{1/s}} \, e^{A |x|^{1/s} -2 A |x-y|^{1/s}} dy \, d \eta, \\
& \lesssim \| f \|_{3A}''  \int_{\rr {2d}} e^{-3A|(y,\eta)|^{1/s}} \, e^{2 A |y|^{1/s}} dy \, d \eta, \\
& \lesssim \| f \|_{3A}'', \quad x \in \rr d, 
\end{align*}
which gives $\| f \|_A' \lesssim \| f \|_{3A}''$. 

From \eqref{STFTinv2} we obtain for any $A>0$ 
\begin{align*}
e^{A |\xi|^{1/s}} |\wh f(\xi)| 
& \lesssim \int_{\rr {2d}} |V_\fy f(y,\eta)| \, e^{A |\xi|^{1/s}} |\wh \fy (\xi-\eta)| \, dy \, d \eta, \\
& \lesssim \| f \|_{3A}''  \int_{\rr {2d}} e^{-3A|(y,\eta)|^{1/s}} \, e^{A |\xi|^{1/s} -2 A |\xi-\eta|^{1/s}} dy \, d \eta, \\
& \lesssim \| f \|_{3A}'', \quad \xi \in \rr d, 
\end{align*}
which gives $\| \wh f \|_A' \lesssim \| f \|_{3A}''$. Thus $\| f \|_A' + \| \wh f \|_A' \lesssim \| f \|_{3A}''$ so combining with \eqref{seminorm2b} we have proved \eqref{seminorm3b}. 

Finally we show that the seminorms $\{\| f \|_{A}'', \ A>0\}$ are equivalent to the same family of seminorms when the window function $\fy \in \Sigma_s(\rr d) \setminus 0$ is replaced by another function $\psi \in \Sigma_s(\rr d) \setminus 0$.
From \eqref{STFTconvolution} we obtain for $A>0$
\begin{align*}
e^{A |z|^{1/s}} |V_\psi f(z)| & \lesssim \int_{\rr {2d}} e^{A |z|^{1/s}} |V_\fy f(z-w)| \, |V_\psi \fy(w)| \, dw \\
& \lesssim \| f \|_{2A}'' \int_{\rr {2d}} e^{A |z|^{1/s} -2A|z-w|^{1/s}} \, |V_\psi \fy(w)| \, dw \\
& \lesssim \| f \|_{2A}'' \int_{\rr {2d}} e^{2A|w|^{1/s}} \, |V_\psi \fy(w)| \, dw \\
& \lesssim \| f \|_{2A}'', \quad z \in \rr {2d}, 
\end{align*}
using \eqref{seminorm3a} applied to $\fy \in \Sigma_s(\rr d)$, 
i.e. $\| \fy \|_{C}'' < \infty$ for all $C>0$. 
This proves the claim that the window function $\psi \in \Sigma_s(\rr d) \setminus 0$ gives seminorms equivalent to those of $\fy \in \Sigma_s(\rr d) \setminus 0$. 
\end{proof}

%%%%%%%%%%%%%%%%%%%%%%%%%%%%%%%%%%%%%%
\section{Definition and properties of the $s$-Gelfand--Shilov wave front set}
\label{sec:defprop}
%%%%%%%%%%%%%%%%%%%%%%%%%%%%%%%%%%%%%%

For $s>1/2$ and $u \in \Sigma_s' (\rr d)$ we define the $s$-Gelfand--Shilov wave front set $WF^s (u)$ 
as follows, modifying slightly Cappiello's and Schulz's \cite[Definition~2.1]{Cappiello1}. 
This concept is a coarsening of the Gabor wave front set $WF(u)$ in the sense that $WF(u) \subseteq WF^s(u)$ for all $s > 1/2$ and all $u \in \cS'(\rr d)$.

\begin{defn}\label{wavefronts}
Let $s > 1/2$, $\varphi \in \Sigma_s(\rr d) \setminus 0$ and $u \in \Sigma_s'(\rr d)$. 
Then $z_0 \in T^*\rr d \setminus 0$ satisfies $z_0 \notin WF^s (u)$ if there exists an open conic set $\Gamma_{z_0} \subseteq T^*\rr d \setminus 0$ containing $z_0$ such that for every $A>0$
\begin{equation*}
\sup_{z \in \Gamma_{z_0}} e^{A | z |^{1/s}} |V_\varphi u(z)| < \infty.
\end{equation*}
\end{defn}

It follows that $WF^s (u) = \emptyset$ if and only if $u \in \Sigma_s'(\rr d)$ satisfies
\begin{equation*}
|V_\varphi u(z)| \lesssim e^{-A | z |^{1/s}}, \quad z \in \rr {2d}, 
\end{equation*}
for any $A>0$. By Proposition \ref{seminormequivalence} (cf. \cite[Proposition~2.4]{Toft1}) this is equivalent to $u \in \Sigma_s(\rr d)$. 

The following lemma is needed in the proof of the independence of $WF^s (u)$ of the window function $\fy \in \Sigma_s(\rr d) \setminus 0$. 

\begin{lem}\label{convolutioninvariance}
Let $s>1/2$ and let $f$ be a measurable function on $\rr d$ that satisfies
\begin{equation}\label{polynomialbound1}
|f(x)| \lesssim e^{M | x |^{1/s}}, \quad x \in \rr d,
\end{equation}
for some $M \geqslant 0$. 
Suppose there exists a non-empty open conic set
$\Gamma \subseteq \rr d \setminus 0$ such that
\begin{equation}\label{conedecay1}
\sup_{x \in \Gamma} e^{A | x |^{1/s}} |f(x)| < \infty
\end{equation}
for all $A>0$. 
If
\begin{equation}\label{L1intersection}
g \in \bigcap_{A > 0} L_{\exp(A |\cdot |^{1/s})}^1(\rr d)
\end{equation}
then for any open conic set $\Gamma' \subseteq \rr d \setminus 0$ such that
$\overline{\Gamma' \cap S_{d-1}} \subseteq \Gamma$, we have
\begin{equation}\label{conedecay2}
\sup_{x \in \Gamma'} e^{A | x |^{1/s}} |f * g(x)| < \infty
\end{equation}
for all $A>0$. 
\end{lem}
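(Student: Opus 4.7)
The plan is to split the convolution integral into two regions depending on the size of $|y|$ relative to $|x|$. The key geometric input is that since $\overline{\Gamma' \cap S_{d-1}}$ is a compact subset of the open set $\Gamma$, there exists $c > 0$ such that for every $x \in \Gamma'$, every $y \in \rr d$ with $|y| \leqslant c|x|$ satisfies $x - y \in \Gamma$. This follows from the conic structure of $\Gamma$: it suffices to verify the claim on the unit sphere, where it reduces to the strictly positive Euclidean distance from $\overline{\Gamma' \cap S_{d-1}}$ to $\rr d \setminus \Gamma$.

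Fix such a $c > 0$ and split $f * g(x) = I_1(x) + I_2(x)$, where $I_1$ integrates over $\{y : |y| \leqslant c|x|\}$ and $I_2$ over its complement. For $I_1$, since $x - y \in \Gamma$ in this region, the hypothesis \eqref{conedecay1} applied with exponent $2A$ gives $|f(x-y)| \lesssim e^{-2A|x-y|^{1/s}}$. Combining with \eqref{exppeetre1} in the form $e^{A|x|^{1/s}} \leqslant e^{2A|x-y|^{1/s}} e^{2A|y|^{1/s}}$ yields
\begin{equation*}
e^{A|x|^{1/s}} |f(x-y)| \lesssim e^{2A|y|^{1/s}},
\end{equation*}
and integrating this against $|g(y)|$ gives a finite bound by \eqref{L1intersection}.

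For $I_2$, we have $|x|^{1/s} \leqslant c^{-1/s} |y|^{1/s}$ throughout the domain of integration. Using the growth bound \eqref{polynomialbound1} together with \eqref{exppeetre1} applied to $x - y = x + (-y)$ yields $|f(x-y)| \lesssim e^{2M|x|^{1/s}} e^{2M|y|^{1/s}}$. Combining,
\begin{equation*}
e^{A|x|^{1/s}} |f(x-y)| \lesssim e^{(A + 2M)c^{-1/s}|y|^{1/s}} e^{2M|y|^{1/s}} = e^{B|y|^{1/s}}
\end{equation*}
for some $B = B(A, M, c, s) > 0$, and integrating against $|g(y)|$ is again finite by \eqref{L1intersection}. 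Summing the two estimates gives \eqref{conedecay2} uniformly in $x \in \Gamma'$.

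The main technical hurdle is the initial geometric step: pinning down the constant $c$ and transferring the compact-in-open inclusion on the sphere to a genuine conic neighbourhood property on all of $\rr d$. Once that is in place, the remainder reduces to routine bookkeeping with the Peetre-type inequality \eqref{exppeetre1} and the exponential integrability of $g$.
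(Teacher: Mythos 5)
Your proposal is correct and follows essentially the same route as the paper's proof: both split the convolution at $|y| \asymp \ep |x|$, apply the conical decay hypothesis \eqref{conedecay1} together with a Peetre-type inequality on the inner region, and use the growth bound \eqref{polynomialbound1} plus the exponential integrability \eqref{L1intersection} of $g$ on the outer region. The only differences are cosmetic (plain norms versus Japanese brackets, which lets you avoid the paper's restriction to $|x| \geqslant L$, and slightly different bookkeeping in the $I_2$ estimate), so there is nothing to add.
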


\begin{proof}
By \eqref{exppeetre1} and the assumptions \eqref{polynomialbound1} and \eqref{L1intersection} 
we have
\begin{equation*}
|f * g (x)| \lesssim e^{2M | x |^{1/s}}, \quad x \in \rr d, 
\end{equation*}
so it suffices to assume $|x| \geqslant L$ for some large number $L>0$. 

Let $\ep>0$.
We estimate and split the convolution integral as
\begin{equation*}
|f * g(x)|
\leqslant  \underbrace{\int_{\eabs{y} \leqslant \ep \eabs{x}} |f(x-y)| \, | g (y)| \, d y}_{:= I_1}
+ \underbrace{\int_{\eabs{y} > \ep \eabs{x}} |f(x-y)| \, | g (y)| \, d y}_{:= I_2}.
\end{equation*}
Consider $I_1$.
Since $\eabs{y} \leqslant \ep \eabs{x}$ we have $x-y \in \Gamma$ if $x \in \Gamma'$, $|x| \geqslant 1$, and $\ep>0$ is chosen sufficiently small.
The assumptions \eqref{conedecay1}, \eqref{L1intersection}, and \eqref{exppeetre2} give
\begin{equation}\label{intuppsk1}
\begin{aligned}
I_1 & \lesssim \int_{\eabs{y} \leqslant \ep \eabs{x}} e^{- A |x+y|^{1/s} } |g (y)| \, d y
\lesssim e^{- \frac{A}{2} |x|^{1/s}} \int_{\rr d} e^{A |y|^{1/s}} |g(y)| \, d y \\
& \lesssim  e^{- \frac{A}{2} |x|^{1/s}}, \quad x \in \Gamma', \quad |x| \geqslant 1, 
\end{aligned}
\end{equation}
for any $A>0$. 
Next we estimate $I_2$ using \eqref{polynomialbound1} and $\eabs{y} > \ep \eabs{x}$. 
The latter inequality implies that $|y|^{1/s} \geqslant |x|^{1/s} \ep^{1/s}/2$ when $|x| \geqslant L$ if $L>0$ is sufficiently large. 
This gives for any $A>0$
\begin{equation}\label{intuppsk2}
\begin{aligned}
I_2 & \lesssim \int_{\eabs{y} > \ep \eabs{x}} e^{M |x-y|^{1/s} } \, |g(y) | \, dy \\
& \leqslant e^{2M |x|^{1/s} } \int_{\eabs{y} > \ep \eabs{x}}  e^{2M |y|^{1/s} } \, e^{-2 \ep^{-1/s}(2M+A) |y|^{1/s} } \, \, e^{2 \ep^{-1/s}(2M+A) |y|^{1/s} }\, |g(y) | \, dy \\
& \leqslant e^{(2M-2M-A) |x|^{1/s} } \int_{\rr d} e^{(2M+2 \ep^{-1/s} (2M +A))|y|^{1/s} }  \, |g(y) | \, dy \\
& \lesssim e^{-A |x|^{1/s} }, \quad x \in \rr d, \quad |x| \geqslant L, 
\end{aligned}
\end{equation}
again using \eqref{L1intersection}.
A combination of \eqref{intuppsk1} and \eqref{intuppsk2} proves \eqref{conedecay2} for $A>0$ arbitrary.
\end{proof}

Using Lemma \ref{convolutioninvariance} we show next that  Definition \ref{wavefronts} does not depend on the choice of the window function $\fy \in \Sigma_s(\rr d) \setminus 0$.

\begin{prop}\label{sGaborinvariance}
Suppose $s>1/2$ and $u \in \Sigma_s'(\rr d)$. The definition of the $s$-Gelfand--Shilov wave front set $WF^s(u)$ does not depend on the window function $\fy \in \Sigma_s(\rr d) \setminus 0$. 
\end{prop}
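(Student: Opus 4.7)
The plan is to reduce the independence statement to Lemma~\ref{convolutioninvariance} via the pointwise convolution bound \eqref{STFTconvolution} that compares STFTs with different windows. Suppose $\fy, \psi \in \Sigma_s(\rr d) \setminus 0$ and $u \in \Sigma_s'(\rr d)$, and assume $z_0 \notin WF^s(u)$ when tested with the window $\fy$. Then there is an open cone $\Gamma \subseteq T^*\rr d \setminus 0$ containing $z_0$ on which $V_\fy u$ decays faster than $e^{-A|z|^{1/s}}$ for every $A>0$. I want to transfer this decay to $V_\psi u$ in a slightly smaller cone around $z_0$.

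First, I would verify the three hypotheses of Lemma~\ref{convolutioninvariance} applied on $\rr{2d}$ to the pair $f = |V_\fy u|$ and $g = |V_\psi \fy|$. The global growth bound \eqref{polynomialbound1} holds for $f$ by the general STFT estimate \eqref{STFTgrowth}, which gives some $M \geqslant 0$ with $|V_\fy u(z)| \lesssim e^{M|z|^{1/s}}$. The cone decay hypothesis \eqref{conedecay1} on $f$ is the assumption $z_0 \notin WF^s(u)$ in the window $\fy$. The integrability condition \eqref{L1intersection} on $g$ follows because $V_\psi \fy \in \Sigma_s(\rr{2d})$, and by the equivalence \eqref{seminorm2a} in Proposition~\ref{seminormequivalence}, every element of $\Sigma_s(\rr{2d})$ satisfies $|V_\psi \fy(w)| \lesssim e^{-A|w|^{1/s}}$ for every $A>0$; multiplying by any weight $e^{A'|w|^{1/s}}$ still yields an integrable function.

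Lemma~\ref{convolutioninvariance} then gives an open cone $\Gamma' \subseteq T^*\rr d \setminus 0$ with $\overline{\Gamma' \cap S_{2d-1}} \subseteq \Gamma$ (chosen to contain $z_0$, which is possible since $\Gamma$ is open and conic) such that $|V_\fy u| * |V_\psi \fy|$ decays faster than $e^{-A|z|^{1/s}}$ on $\Gamma'$ for every $A>0$. By the convolution bound \eqref{STFTconvolution} this transfers directly to $V_\psi u$, so $z_0 \notin WF^s(u)$ in the window $\psi$ as well. Swapping the roles of $\fy$ and $\psi$ gives the reverse inclusion, which proves that $WF^s(u)$ is independent of the window.

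I do not expect a real obstacle: the only subtle point is checking that the cone $\Gamma'$ can be arranged to contain $z_0$ (achieved by shrinking any open cone around $z_0$ so its closure on the sphere sits inside $\Gamma$), and verifying that $V_\psi \fy$ really lies in every weighted $L^1$ space, which is exactly what Proposition~\ref{seminormequivalence} provides. Everything else is mechanical application of \eqref{STFTconvolution}, \eqref{STFTgrowth} and Lemma~\ref{convolutioninvariance}.
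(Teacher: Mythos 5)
Your proof is correct and follows essentially the same route as the paper: bound $V_\psi u$ by $|V_\fy u|*|V_\psi\fy|$ via \eqref{STFTconvolution}, check the hypotheses of Lemma~\ref{convolutioninvariance} using \eqref{STFTgrowth} and Proposition~\ref{seminormequivalence}, and conclude by symmetry. The only nitpick is that the decay $|V_\psi\fy(w)|\lesssim e^{-A|w|^{1/s}}$ comes from the STFT-seminorm equivalence \eqref{seminorm3a} rather than \eqref{seminorm2a}, but this does not affect the argument.
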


\begin{proof}
Let $\fy,\psi \in \Sigma_s(\rr d) \setminus 0$. 
By \eqref{STFTgrowth} we have for some $M \geqslant 0$
$$
|V_\varphi u (z)| \lesssim e^{M |z|^{1/s} }, \quad z \in \rr {2d}, 
$$
and by \eqref{STFTconvolution} we have 
\begin{equation*}
|V_\psi u(z)| \leqslant (2 \pi)^{-d} \| \fy \|_{L^2} |V_\fy u| * |V_\psi \fy| (z), \quad z \in \rr {2d}. 
\end{equation*}
By Proposition \ref{seminormequivalence} (cf. \cite[Theorem~2.4]{Toft1}) we have 
\begin{equation*}
|V_\psi \fy (z)| \lesssim e^{-A |z|^{1/s}}, \quad z \in \rr {2d}, 
\end{equation*} 
for any $A>0$, 
and hence 
\begin{equation*}
V_\psi \fy \in \bigcap_{A > 0} L_{\exp(A| \cdot |^{1/s})}^1(\rr {2d}). 
\end{equation*} 

From Lemma \ref{convolutioninvariance} we may now draw the following conclusion. 
If $|V_\fy u(z)|$ decays like $e^{-A |z|^{1/s} }$ for any $A>0$ in a conic set $\Gamma \subseteq T^* \rr d \setminus 0$ containing $z_0 \neq 0$ then 
we get decay like $e^{-A |z|^{1/s} }$ for any $A>0$ in a smaller cone containing $z_0$ for $|V_\psi u(z)|$. 
Hence, by symmetry, decay of order $e^{-A |z|^{1/s} }$ for any $A>0$ in an open cone around a point in $T^* \rr d \setminus 0$ happens simultaneously for $V_\fy u$ and $V_\psi u$. 
\end{proof}

The $s$-Gelfand--Shilov wave front set $WF^s (u)$ decreases when the index $s$ increases: 
\begin{equation}\label{sGaborinclusion}
t \geqslant s \quad \Longrightarrow \quad WF^t(u) \subseteq WF^s(u). 
\end{equation}

From $WF(u) \subseteq WF^s(u)$ for $u \in \cS'(\rr d)$ and \eqref{example1} we have for any $s > 1/2$
\begin{equation*}
WF^s(\delta_0) \supseteq \{ 0 \} \times (\rr d \setminus  0 ).  
\end{equation*}
To see the opposite inclusion we note that if $x_0 \in \rr d \setminus 0$ and $\xi_0 \in \rr d$ then $(x_0,\xi_0) \in \Gamma = \{(x,\xi) \in T^* \rr d \setminus 0: |\xi| < C |x| \}$ for some $C>0$, which is an open conic subset of $T^* \rr d \setminus 0$. 

Let $\fy \in \Sigma_s(\rr d) \setminus 0$ and let $\ep>0$. 
Since $|V_\fy \delta_0 (x, \xi)| = |\fy(-x)|$
it follows from Proposition \ref{seminormequivalence} that for any $A>0$
\begin{equation*}
\sup_{z \in \Gamma} e^{A | z |^{1/s}} |V_\varphi \delta_0(z)| 
\leqslant \sup_{z \in \Gamma} e^{2A (1+C^{1/s}) | x |^{1/s}} |\fy(-x)| < \infty.
\end{equation*} 
This shows that $(x_0,\xi_0) \notin WF^s(\delta_0)$, and proves 
\begin{equation*}
WF^s(\delta_0) \subseteq \{ 0 \} \times (\rr d \setminus 0 ).  
\end{equation*}
Hence
\begin{equation}\label{WFsdirac}
WF^s(\delta_0) = \{ 0 \} \times (\rr d \setminus 0 ), \quad s > 1/2.
\end{equation}

Next we show continuity of the metaplectic operators when they act on $\Sigma_{s}(\rr d)$ for $s>1/2$. 
We note that continuity of metaplectic operators acting on $\mathcal S_s (\rr d)$ for $s \geqslant 1$ is contained in
\cite[Proposition~3.5]{Cordero7}, and G.~Tranquilli \cite[Theorem~32]{Tranquilli1} has shown continuity on $\mathcal S_s (\rr d)$ for $s \geqslant 1/2$. Our proof is inspired by hers. 

\begin{prop}\label{metaplecticcont}
If $s>1/2$ and $\chi \in \Sp(d,\ro)$ then the metaplectic operator $\mu(\chi)$ acts continuously on $\Sigma_{s}(\rr d)$, and extends uniquely to a continuous operator on $\Sigma_s'(\rr d)$. 
\end{prop}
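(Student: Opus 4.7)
My plan is to invoke the classical fact that $\Sp(d,\ro)$ is generated by three families of symplectic matrices whose metaplectic images are explicit: up to unimodular constants, the Fourier transform $\mu(\J) = \cF$, the dilations $\mu(\chi_B) f(x) = |\det B|^{1/2} f(Bx)$ for $B \in \GL(d,\ro)$, and the chirp multiplications $\mu(\chi_A) f(x) = e^{i\la Ax,x\ra/2} f(x)$ for symmetric $A \in \rr{d \times d}$. Since $\mu$ is a projective representation, continuity of $\mu(\chi)$ on $\Sigma_s(\rr d)$ for arbitrary $\chi \in \Sp(d,\ro)$ reduces by composition to continuity of each of these three generating families.

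For the Fourier transform and the dilations nothing new needs to be proved: both are listed among the basic invariances of $\Sigma_s(\rr d)$ in Section \ref{sec:prelim}, and the positive constant factor $|\det B|^{1/2}$ in the dilation case is harmless. The substantive case is the chirp. For $Q(x) = \la Ax,x\ra/2$ a Fa\`a di Bruno calculation, exploiting the fact that $Q$ is a polynomial of degree $2$, yields the bound
\begin{equation*}
|\partial^\gamma e^{iQ(x)}| \lesssim C^{|\gamma|} (\gamma!)^{1/2} \eabs{x}^{|\gamma|}, \quad x \in \rr d, \quad \gamma \in \nn d,
\end{equation*}
which is the Gelfand--Shilov counterpart of the Hermite estimate. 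Given $f \in \Sigma_s(\rr d)$, I would apply the Leibniz rule to $D^\beta(e^{iQ} f)$ and work with the seminorms $|\cdot|_A$ from \eqref{seminorm3}. Since $s > 1/2$, the Gevrey weight $(\gamma!)^{1/2}$ produced by the chirp is dominated by $(\gamma!)^s$, while the polynomial factor $\eabs{x}^{|\gamma|}$ is absorbed into the exponential weight $e^{A|x|^{1/s}}$ at the cost of a small decrease in $A$, using the elementary inequality $t^{|\gamma|} e^{-\delta t^{1/s}} \lesssim C^{|\gamma|}(\gamma!)^s$. Summing over the Leibniz expansion then gives $|e^{iQ} f|_{A'} \lesssim |f|_{A''}$ for appropriately chosen constants, which is the desired continuity.

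Finally, the extension to $\Sigma_s'(\rr d)$ is obtained by duality. The $L^2$-adjoint of $\mu(\chi)$ coincides with $\mu(\chi^{-1})$ up to a sign, either directly from unitarity or from the intertwining relation \eqref{symplecticoperator}, and $\chi^{-1} \in \Sp(d,\ro)$, so the same argument provides continuity of $\mu(\chi^{-1})$ on $\Sigma_s(\rr d)$. The formula $(\mu(\chi) u, \varphi) = (u, \mu(\chi^{-1}) \varphi)$ for $u \in \Sigma_s'(\rr d)$ and $\varphi \in \Sigma_s(\rr d)$ then defines the extension, continuous by the continuity of $\mu(\chi^{-1})$ on the test function space, and uniqueness is automatic since the extension already agrees with the original operator on the dense subspace $\Sigma_s(\rr d) \subseteq \Sigma_s'(\rr d)$. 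The main hurdle I anticipate is justifying the $\sqrt{\gamma!}$ bound on the chirp derivatives with honest constants; once this is in hand, everything else is routine bookkeeping with the three equivalent families of seminorms established in Proposition \ref{seminormequivalence}.
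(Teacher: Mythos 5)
Your overall architecture (generators of $\Sp(d,\ro)$ plus duality for the extension to $\Sigma_s'(\rr d)$) is exactly the paper's, and the Fourier/dilation cases and the adjoint argument are fine. The gap is in the chirp case, and it is not the hurdle you anticipate: the bound $|\partial^\gamma e^{iQ}(x)| \lesssim C^{|\gamma|}(\gamma!)^{1/2}\eabs{x}^{|\gamma|}$ is \emph{true} but too lossy to close the argument. In the Leibniz sum for $D^\beta(e^{iQ}f)$, the term indexed by $\gamma$ carries the factor $(\gamma!)^{1/2}$ from your bound \emph{and} an additional $(|\gamma|!)^s$ from absorbing $\eabs{x}^{|\gamma|}$ into the exponential weight via $t^{|\gamma|}e^{-\delta t^{1/s}} \lesssim C^{|\gamma|}(|\gamma|!)^s$. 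Against this you only have $((\beta-\gamma)!)^s/(\beta!)^s = \binom{\beta}{\gamma}^{-s}(\gamma!)^{-s}$ from the seminorm bookkeeping, so the net contribution of the term $\gamma=\beta$ is of order $C^{|\beta|}(\beta!)^{s+1/2}$ rather than $C^{|\beta|}(\beta!)^s$. Since membership in $\Sigma_s$ requires the estimate for \emph{every} $h>0$ (equivalently every $A>0$), the excess $(\beta!)^{1/2}$ cannot be hidden in a constant; what you would actually prove is continuity into something like $\Sigma_{s+1/2}$, not $\Sigma_s$.

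The reason your bound is too weak is that it decouples two quantities that are anticorrelated. Writing (in $d=1$, to which the paper reduces by diagonalizing $A$ and tensoring) $D^k e^{ix^2/2} = p_k(x)e^{ix^2/2}$ with
\begin{equation*}
p_k(x) = k!\sum_{m=0}^{\lfloor k/2\rfloor}\frac{x^{k-2m}(-i)^m}{m!\,(k-2m)!\,2^m},
\end{equation*}
the monomial carrying the large coefficient $\sim m!$ has the \emph{low} power $|x|^{k-2m}$, whose absorption costs only $((k-2m)!)^s$; one then uses $(m!)^{2s}((k-2m)!)^s \le ((2m)!\,(k-2m)!)^s \le (k!)^s$ (this is where $s\geqslant 1/2$ enters) and spends the leftover $(m!)^{-(2s-1)}$ to control $A^{2m}$. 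Your single combined estimate instead charges the worst factorial $(\gamma!)^{1/2}$ and the worst power $\eabs{x}^{|\gamma|}$ simultaneously, overestimating $|p_k(x)|$ by a factor $(k!)^{1/2}$ precisely at the points $|x|\sim k^s$ where the supremum defining $|\cdot|_A$ lives. So you must keep the termwise form of $p_k$ (or an equivalent two-parameter bound $|\partial^\gamma e^{iQ}(x)| \le C^{|\gamma|}\sum_{2m\le|\gamma|} m!\,|x|^{|\gamma|-2m}$) through the Leibniz computation; with that replacement the rest of your plan goes through and coincides with the paper's proof.
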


\begin{proof}
By \cite[Proposition~4.10]{Folland1} each matrix $\chi \in \Sp(d,\ro)$ is a finite product of matrices of the form
\begin{equation*}
\J, \quad 
\left(
  \begin{array}{cc}
  A^{-1} & 0 \\
  0 & A^{t}
  \end{array}
\right), 
\quad
\left(
  \begin{array}{cc}
  I & 0 \\
  B & I
  \end{array}
\right), 
\end{equation*}
for $A \in \GL(d,\ro)$ and $B \in \rr {d \times d}$ symmetric. 
To show that $\mu(\chi)$ is continuous on $\Sigma_{s}(\rr d)$ it thus suffices to show that 
$\mu(\chi)$ is continuous on $\Sigma_{s}(\rr d)$ when $\chi$ has each of these three forms. 

We have $\mu(\J) = (2 \pi)^{-d/2} \cF$, and $\mu(\chi) f(x) =  |A|^{1/2} f(Ax)$ when $A \in \GL(d,\ro)$ and
\begin{equation*}
\chi = 
\left(
  \begin{array}{cc}
  A^{-1} & 0 \\
  0 & A^{t}
  \end{array}
\right).  
\end{equation*}
The Fourier transform and linear coordinate transformations are continuous operators on $\Sigma_{s}(\rr d)$. 
Therefore it remains to prove that $\mu(\chi)$ is continuous on $\Sigma_{s}(\rr d)$ when $B \in \rr {d \times d}$ is symmetric and
\begin{equation}\label{chichirp}
\chi = 
\left(
  \begin{array}{cc}
  I & 0 \\
  B & I
  \end{array}
\right). 
\end{equation}
We have $\mu(\chi)f (x) = e^{i \la B x,x \ra/2} f(x)$ when \eqref{chichirp} holds (cf. \eqref{symplecticoperator} and \cite{Folland1}). 

Due to the continuity of coordinate transformations on $\Sigma_{s}(\rr d)$, it suffices 
to consider diagonal matrices $B$ with non-negative entries. 
By an induction argument applied to the seminorms \eqref{gfseminorm} it further suffices to
work in dimension $d=1$ and prove continuity on $\Sigma_{s}(\ro)$ of the multiplication operator $f \rightarrow g f$ for $g(x) = e^{i x^2/2}$. 

It may be confirmed by induction that 
for any $k \in \no$ we have $D^k g = p_k g$ where $p_k$ is the polynomial of order $k$
\begin{equation*}
p_k (x) = k! \sum_{m=0}^{\lfloor k/2  \rfloor} \frac{x^{k-2m} (-i)^m}{m!(k-2m)! 2^m}. 
\end{equation*}
Using $k! \leqslant 2^k (k-2m)! (2m)!$ we can estimate $|p_k(x)|$ as 
\begin{equation*}
|p_k (x)| 
\leqslant \sum_{m=0}^{\lfloor k/2  \rfloor} \frac{|x|^{k-2m} (2m)!}{m! 2^{m-k}} 
\leqslant \sum_{m=0}^{\lfloor k/2  \rfloor} |x|^{k-2m} m! 2^{m+k}.
\end{equation*}
By Leibniz' rule we have for any $A>0$ and any $B \geqslant A$
\begin{align*}
|D^n (g f)(x)| 
& \leqslant \sum_{k \leqslant n} \binom{n}{k} |p_k(x)| \, |D^{n-k} f(x)| \\
& \leqslant |f|_{2B} (n!)^s e^{-B|x|^{1/s}} A^{-n} \sum_{k \leqslant n} \binom{n}{k} |p_k(x)| e^{-B|x|^{1/s}} A^{k} \left( \frac{(n-k)!}{n!} \right)^s \\
& \leqslant |f|_{2B} (n!)^s e^{-A|x|^{1/s}} A^{-n} \sum_{k \leqslant n} \binom{n}{k} |p_k(x)| e^{-B|x|^{1/s}} A^{k} (k!)^{-s}, \\
& \qquad \qquad \qquad \qquad \qquad \qquad \qquad \qquad \qquad n \in \no, \quad x \in \ro.
\end{align*}

As an intermediate step we compute and estimate, using $m! = m!^{2s -\ep}$ where $\ep=2s-1>0$, 
\begin{align*}
|p_k(x)| A^{k} (k!)^{-s}
& \leqslant \sum_{m=0}^{\lfloor k/2  \rfloor} |x|^{k-2m} m! 2^{m+k} A^{k} (k!)^{-s} \\
& \leqslant \sum_{m=0}^{\lfloor k/2  \rfloor} 2^{m+k} A^{2m} m!  \left(\frac{(k-2m)!}{k!}\right)^s \left(\frac{(A|x|)^{(k-2m)/s}}{(k-2m)!}\right)^s \\
& \leqslant e^{s A^{1/s} |x|^{1/s}} \sum_{m=0}^{\lfloor k/2  \rfloor} 2^{m+k} \left(\frac{A^{2m/\ep}}{m!} \right)^\ep \left(\frac{m!^2(k-2m)!}{k!}\right)^s \\
& \leqslant e^{s A^{1/s} |x|^{1/s}} \sum_{m=0}^{\lfloor k/2  \rfloor} 2^{m+k} \left(\frac{A^{2m/\ep}}{m!} \right)^\ep \\
& \lesssim e^{s A^{1/s} |x|^{1/s}} e^{\ep A^{2/\ep}} 2^{2k}.
\end{align*}
If $B \geqslant sA^{1/s}$ we thus obtain
\begin{align*}
|D^n (g f)(x)| 
& \lesssim |f|_{2B} (n!)^s e^{-A|x|^{1/s}} (A/5)^{-n} \\
& \leqslant |f|_{2B} (n!)^s e^{-(A/5)|x|^{1/s}} (A/5)^{-n}, \quad n \in \no, \quad x \in \ro. 
\end{align*}
Hence we have shown that for any $A>0$ we have
\begin{equation*}
|g f|_A \lesssim |f|_B
\end{equation*}
for $B \geqslant \max(2 s(5A)^{1/s}, 10A)$. 
In view of Proposition \ref{seminormequivalence} this shows that the multiplication operator $f \rightarrow g f$ with $g(x) = e^{i x^2/2}$ is continuous on $\Sigma_{s}(\ro)$. 

Thus $\mu(\chi)$ is continuous on $\Sigma_s(\rr d)$ when $\chi$ has the form \eqref{chichirp} with $B$ symmetric. 
We may now conclude that $\mu(\chi)$ is continuous on $\Sigma_s(\rr d)$ for all $\chi \in \Sp(d, \ro)$. 

Finally the unique extension to a continuous operator on $\Sigma_s'(\rr d)$ follows from the facts that $\mu(\chi)$ is unitary and 
$\Sigma_s(\rr d)$ is dense in $\Sigma_s'(\rr d)$.
\end{proof} 

The combination of Propositions \ref{sGaborinvariance} and \ref{metaplecticcont} gives the symplectic invariance of the $s$-Gelfand--Shilov wave front set, as follows. 

\begin{cor}\label{symplecticWFs}
If $s>1/2$ then 
\begin{equation*}
WF^s(\mu(\chi) u) = \chi WF^s(u), \quad \chi \in \Sp(d,\ro), \quad u \in \Sigma_s' (\rr d). 
\end{equation*}
\end{cor}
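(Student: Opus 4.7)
The plan is to deduce the corollary from the STFT characterization of $WF^s$ (Definition \ref{wavefronts}), the window-independence (Proposition \ref{sGaborinvariance}), and the continuity of metaplectic operators on $\Sigma_s(\rr d)$ established in Proposition \ref{metaplecticcont}. The key algebraic fact is the intertwining relation between $\mu(\chi)$ and phase space translations: from \eqref{symplecticoperator} applied to the symbol $a(w) = e^{i\sigma(w,z)}$, or directly from standard identities (see e.g.\ \cite{Folland1}), one has
\begin{equation*}
\mu(\chi)^{-1} \, \Pi(z) \, \mu(\chi) = c(\chi,z) \, \Pi(\chi^{-1}z), \qquad z \in \rr{2d},
\end{equation*}
where $c(\chi,z) \in \co$ has modulus one. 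Consequently, for $u \in \Sigma_s'(\rr d)$ and $\fy \in \Sigma_s(\rr d)\setminus 0$,
\begin{equation*}
|V_\fy(\mu(\chi)u)(z)| = |(u,\mu(\chi)^{-1}\Pi(z)\fy)| = |V_\psi u(\chi^{-1}z)|, \qquad z \in \rr{2d},
\end{equation*}
with $\psi := \mu(\chi)^{-1}\fy = \mu(\chi^{-1})\fy \in \Sigma_s(\rr d)\setminus 0$ by Proposition \ref{metaplecticcont}.

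With this identity in hand, I would verify one inclusion and get the other by symmetry. Suppose $z_0 \notin WF^s(u)$; by Proposition \ref{sGaborinvariance}, $V_\psi u$ decays like $e^{-A|w|^{1/s}}$ for every $A>0$ on some open conic neighborhood $\Gamma$ of $z_0$. The image $\chi\Gamma$ is again open and conic (since $\chi$ is linear and invertible), and contains $\chi z_0$. For $z \in \chi\Gamma$ we have $\chi^{-1}z \in \Gamma$, and since $\chi^{-1}$ is a bounded invertible linear map there are constants $c_1,c_2>0$ with $c_1|z| \leqslant |\chi^{-1}z| \leqslant c_2|z|$. Therefore, for every $A>0$,
\begin{equation*}
\sup_{z \in \chi\Gamma} e^{A|z|^{1/s}} |V_\fy(\mu(\chi)u)(z)|
\leqslant \sup_{w \in \Gamma} e^{A c_1^{-1} |w|^{1/s}} |V_\psi u(w)| < \infty,
\end{equation*}
which shows $\chi z_0 \notin WF^s(\mu(\chi)u)$. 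Hence $WF^s(\mu(\chi)u) \subseteq \chi WF^s(u)$.

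The reverse inclusion is obtained by applying the same argument to $\mu(\chi^{-1})$ and the distribution $\mu(\chi)u \in \Sigma_s'(\rr d)$: one finds $WF^s(u) \subseteq \chi^{-1} WF^s(\mu(\chi)u)$, i.e.\ $\chi WF^s(u) \subseteq WF^s(\mu(\chi)u)$. Combining both inclusions gives the claimed equality. The only nonroutine ingredient is the intertwining identity together with the knowledge that $\mu(\chi)^{-1}\fy$ is again a valid $\Sigma_s$-window, but this is exactly what Proposition \ref{metaplecticcont} provides; the comparison $|\chi^{-1}z| \asymp |z|$ and the resulting preservation of exponential-conic decay are elementary and form no real obstacle.
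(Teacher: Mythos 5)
Your proposal is correct and follows essentially the same route as the paper: the paper invokes the covariance identity $|V_{\mu(\chi)\fy}(\mu(\chi)u)(\chi z)| = |V_\fy u(z)|$ (citing the proof of \cite[Lemma~3.7]{Wahlberg1}), which is exactly the identity you derive from the intertwining relation, and then combines it with Propositions \ref{metaplecticcont} and \ref{sGaborinvariance} just as you do. Your version merely makes explicit the derivation of that identity and the conic-set bookkeeping (and has a harmless typo, $c_1^{-1}$ in place of $c_1^{-1/s}$, which is irrelevant since the decay holds for every $A>0$).
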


\begin{proof}
By the proof of \cite[Lemma~3.7]{Wahlberg1} we have 
\begin{equation*}
\left| V_{\mu(\chi) \fy} \left( \mu(\chi) u \right)(\chi z) \right| = \left| V_\fy u (z) \right|
\end{equation*}
for $\fy \in \Sigma_s(\rr d)$, $u \in \Sigma_s'(\rr d)$, $\chi \in \Sp(d,\ro)$ and $z \in \rr {2d}$. 
By Proposition \ref{metaplecticcont} $\mu(\chi) \fy \in \Sigma_s(\rr d)$ so the 
result follows immediately from Proposition \ref{sGaborinvariance}. 
\end{proof}

\begin{example}
A combination of 
\eqref{WFsdirac} and $\mu(\J) = (2\pi)^{-d/2} \cF$ gives
\begin{equation}\label{WFsone}
WF^s( 1 ) = (\rr d \setminus 0) \times \{ 0 \}, \quad s> 1/2.
\end{equation}
If $B \in \rr {d \times d}$ is symmetric then $\chi$ defined by \eqref{chichirp} 
defines the metaplectic multiplication operator $\mu(\chi) = e^{i \la Bx,x \ra/2}$. 
Corollary \ref{symplecticWFs} combined with \eqref{WFsone} yields
\begin{equation}\label{WFschirp}
WF^s( e^{i \la Bx,x \ra/2} ) = \{ (x,Bx) : \ x \in \rr d \setminus 0 \}, \quad s > 1/2.
\end{equation}
\end{example}

Next we show a result on the $s$-Gelfand--Shilov wave front set of a tensor product. 
The corresponding result for the Gabor wave front set is \cite[Proposition~2.8]{Hormander1}.
With obvious modification of the proof given below you get an alternative proof of the latter result. 
Here we use the notation $x=(x',x'') \in \rr {m+n}$, $x' \in \rr m$, $x'' \in \rr n$. 

\begin{prop}\label{tensorWFs}
If $s>1/2$, $u \in \Sigma_s'(\rr m)$, and $v \in \Sigma_s'(\rr n)$ then 
\begin{align*}
& WF^s (u \otimes v) \subseteq \left( ( WF^s(u) \cup \{0\} ) \times ( WF^s(v) \cup \{0\} ) \right)\setminus 0 \\
& = \{ (x,\xi) \in T^* \rr {m+n} \setminus 0: \ (x',\xi') \in WF^s(u) \cup \{ 0 \}, \ (x'',\xi'') \in WF^s(v) \cup \{ 0 \} \} \setminus 0. 
\end{align*}
\end{prop}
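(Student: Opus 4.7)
The plan is to reduce the problem to the factorization property of the STFT with respect to tensor products. Choosing window functions $\fy \in \Sigma_s(\rr m) \setminus 0$ and $\psi \in \Sigma_s(\rr n) \setminus 0$, their tensor product $\Phi = \fy \otimes \psi$ belongs to $\Sigma_s(\rr{m+n}) \setminus 0$ since $\Sigma_s$ is stable under tensor products, and by Proposition \ref{sGaborinvariance} we may use it as window when computing $WF^s(u \otimes v)$. A direct computation (verified first for smooth $u,v$ and extended by the sequential continuity of the tensor product pairing on Gelfand--Shilov distributions) yields the factorization
\begin{equation*}
V_\Phi(u \otimes v)(x,\xi) = V_\fy u(x',\xi') \cdot V_\psi v(x'',\xi''), \qquad x = (x',x''), \quad \xi = (\xi',\xi'').
\end{equation*}

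Next I would fix $(x_0,\xi_0) \in T^*\rr{m+n} \setminus 0$ lying outside the claimed right-hand side. Unfolding this condition, at least one projection, say $(x_0',\xi_0')$, satisfies $(x_0',\xi_0') \notin WF^s(u) \cup \{0\}$; in particular $(x_0',\xi_0') \neq 0$, and there exists an open conic neighborhood $\Gamma_1 \subseteq T^*\rr m \setminus 0$ of $(x_0',\xi_0')$ on which $|V_\fy u(z')| \lesssim e^{-A|z'|^{1/s}}$ for every $A>0$. Simultaneously, \eqref{STFTgrowth} applied to $v$ yields an $M \geqslant 0$ such that $|V_\psi v(z'')| \lesssim e^{M|z''|^{1/s}}$ for all $z'' \in \rr{2n}$.

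The key geometric step is to pick a sufficiently small open conic neighborhood $\Gamma \subseteq T^*\rr{m+n} \setminus 0$ of $(x_0,\xi_0)$ so that every $z = (z',z'') \in \Gamma$ satisfies $z' \in \Gamma_1$ together with the quantitative comparisons $|z'| \geqslant c|z|$ and $|z''| \leqslant c'|z|$ for positive constants $c, c'$ depending only on $(x_0,\xi_0)$. Such a $\Gamma$ exists because $(x_0',\xi_0') \neq 0$ forces the ratio $|z'|/|z|$ to stay bounded away from zero in a conic neighborhood. Inserting these estimates into the factorization, for any prescribed $A' > 0$ one may choose $A$ large enough ($A \geqslant (A' + Mc'^{1/s})/c^{1/s}$ suffices) to obtain
\begin{equation*}
e^{A'|z|^{1/s}} |V_\Phi(u\otimes v)(z)| \lesssim e^{(-Ac^{1/s} + Mc'^{1/s} + A')|z|^{1/s}} \lesssim 1, \qquad z \in \Gamma,
\end{equation*}
which by Definition \ref{wavefronts} gives $(x_0,\xi_0) \notin WF^s(u \otimes v)$. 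The symmetric case where instead $(x_0'',\xi_0'') \notin WF^s(v) \cup \{0\}$ is handled identically by interchanging the roles of $u$ and $v$.

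The only subtle point is the geometric comparison step, which is routine once one observes that $(x_0',\xi_0') \neq 0$; the core of the argument is then that the arbitrarily rapid decay of $V_\fy u$ on $\Gamma_1$ is strong enough to absorb the fixed exponential growth of $V_\psi v$ on all of $\rr{2n}$ that comes from \eqref{STFTgrowth}. No converse inclusion needs to be established, in agreement with the analogous situation for the Gabor wave front set in \cite[Proposition~2.8]{Hormander1}.
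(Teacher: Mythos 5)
Your proposal is correct and follows essentially the same route as the paper's proof: factorize the STFT over the tensor product, assume without loss of generality that $(x_0',\xi_0')\notin WF^s(u)\cup\{0\}$, build a conic neighborhood of $(x_0,\xi_0)$ on which $|(x'',\xi'')|$ is controlled by $|(x',\xi')|$ (your formulation via $|z'|\geqslant c|z|$, $|z''|\leqslant c'|z|$ is equivalent to the paper's $|(x'',\xi'')|<C|(x',\xi')|$), and let the arbitrarily fast decay of $V_\fy u$ absorb the fixed exponential growth of $V_\psi v$ from \eqref{STFTgrowth}. No gaps.
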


\begin{proof}
Let $\fy \in \Sigma_s(\rr m) \setminus 0$ and $\psi \in \Sigma_s(\rr n) \setminus 0$. 
Suppose $(x_0,\xi_0) \in T^* \rr {m+n} \setminus 0$ does not belong to the set on the right hand side. 
Then either $(x_0',\xi_0') \notin WF^s(u) \cup \{ 0 \}$ or $(x_0'',\xi_0'') \notin WF^s(v) \cup \{ 0 \}$. 
For reasons of symmetry we may assume $(x_0',\xi_0') \notin WF^s(u) \cup \{ 0 \}$. 

Then $(x_0',\xi_0') \in \Gamma' \subseteq T^* \rr m \setminus 0$ where $\Gamma'$ is an open conic subset, and
\begin{equation*}
\sup_{(x',\xi') \in \Gamma'} e^{A | (x',\xi') |^{1/s}} |V_\varphi u(x',\xi'))| < \infty
\end{equation*}
for all $A>0$. 
Define for $C>0$ the open conic set
\begin{equation*}
\Gamma = \{ (x,\xi) \in T^* \rr {m+n}: \ (x',\xi') \in \Gamma', \ |(x'',\xi'')| < C |(x',\xi')| \} \subseteq T^* \rr {m+n} \setminus 0. 
\end{equation*}
Then $(x_0,\xi_0) \in \Gamma$ for $C>0$ sufficiently large since $(x_0',\xi_0') \neq 0$. 
By \eqref{STFTgrowth} we have for some $M \geqslant 0$
\begin{equation*}
|V_\psi v (z)| \lesssim e^{M |z|^{1/s} }, \quad z \in \rr {2d}.  
\end{equation*}
This gives for any $A>0$
\begin{align*}
\sup_{(x,\xi) \in \Gamma} e^{A | (x,\xi)|^{1/s}}& |V_{\fy \otimes \psi} u \otimes v (x,\xi)|
 = \sup_{(x,\xi) \in \Gamma} e^{A | (x,\xi)|^{1/s}} |V_\fy u (x',\xi')| \, |V_\psi v (x'',\xi'')| \\
& \lesssim \sup_{(x,\xi) \in \Gamma} e^{2A | (x',\xi')|^{1/s}+(2A+M)|(x'',\xi'')|^{1/s}} |V_\fy u (x',\xi')| \\
& \leqslant \sup_{(x',\xi') \in \Gamma'} e^{(2A+C^{1/s}(2A+M)) | (x',\xi')|^{1/s}} |V_\fy u (x',\xi')| \\
& < \infty. 
\end{align*}
It follows that $(x_0,\xi_0) \notin WF^s (u \otimes v)$. 
\end{proof}

We need in Section \ref{sec:oscint} the following result which is an adaptation of \cite[Proposition~2.3]{Hormander1} from the Gabor wave front set to the $s$-Gelfand--Shilov wave front set. Modified naturally the proof can be considered an alternative proof of the latter result. 

\begin{prop}\label{linsurj}
If $s>1/2$, $u \in \Sigma_s'(\rr d) \setminus 0$ and $A \in \rr {d \times n}$ is a surjective matrix, then
\begin{equation*}
WF^s(u \circ A) = \{ (x,A^t \xi) \in T^* \rr n \setminus 0: \ (Ax,\xi) \in WF^s(u) \} \cup \Ker A \setminus 0 \times \{ 0 \}.  
\end{equation*}
\end{prop}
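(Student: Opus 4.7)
Since $A$ is surjective, one can pick $T \in \GL(n,\ro)$ with $AT = \pi$, where $\pi \colon \rr n \to \rr d$ denotes the canonical projection $\pi(x',x'') = x'$ (take any splitting $\rr n = V \oplus \Ker A$ with $A|_V$ a bijection onto $\rr d$). Then
$$u \circ A = (u \circ \pi) \circ T^{-1} = (u \otimes 1) \circ T^{-1},$$
and composition with $T^{-1}$ is, up to a positive scalar, the action of the metaplectic operator $\mu(\chi)$ with $\chi = \begin{pmatrix} T & 0 \\ 0 & T^{-t} \end{pmatrix} \in \Sp(n,\ro)$. Corollary \ref{symplecticWFs} then gives
$$WF^s(u \circ A) = \{(Ty, T^{-t} \zeta) \,:\, (y,\zeta) \in WF^s(u \otimes 1)\}.$$
The relations $A = \pi T^{-1}$ (whence $A^t = T^{-t} \pi^t$) and $T(\Ker \pi) = \Ker A$ show that the image under $\chi$ of the right-hand side of the proposition for $\pi$ is exactly the right-hand side for $A$. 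Hence it suffices to prove
\begin{equation}\label{proposal:tensorid}
WF^s(u \otimes 1) = \{((x',x''),(\xi,0)) \,:\, (x',\xi) \in WF^s(u)\} \cup (\{0\} \times (\rr{n-d} \setminus 0)) \times \{0\}.
\end{equation}

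\textbf{Proof of \eqref{proposal:tensorid}.} The inclusion $\subseteq$ is immediate from Proposition \ref{tensorWFs} applied with $v = 1 \in \Sigma_s'(\rr{n-d})$, combined with $WF^s(1) \cup \{0\} = \rr{n-d} \times \{0\}$ from \eqref{WFsone}. For the reverse inclusion I use a tensor-product window $\psi = \fy_1 \otimes \fy_2$ with $\fy_j \in \Sigma_s \setminus 0$ and $\fy_2 \geqslant 0$ nontrivial, so that $\widehat{\fy_2}(0) > 0$. The STFT then factorizes,
$$V_\psi(u \otimes 1)((y',y''),(\eta',\eta'')) = V_{\fy_1} u(y',\eta') \, V_{\fy_2} 1(y'',\eta''),$$
and $V_{\fy_2} 1(\cdot, 0) \equiv \overline{\widehat{\fy_2}(0)}$ is a nonzero constant. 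For a point $((0, x''_0), 0)$ with $x''_0 \neq 0$ I further choose $\fy_1$ with $V_{\fy_1} u(0,0) = (u,\fy_1) \neq 0$, which is possible since $u \neq 0$; along the ray $\lambda((0, x''_0), 0)$, $\lambda > 0$, the STFT is a fixed nonzero constant, so no rapid decay in any conic neighborhood is possible. For a point $((x'_0, x''_0), (\xi_0, 0))$ with $(x'_0, \xi_0) \in WF^s(u)$ I argue by contradiction: if an open cone $\Gamma$ containing this point carried $e^{-A|z|^{1/s}}$-decay of $V_\psi(u \otimes 1)$ for every $A > 0$, then evaluating on the lifted points $((y', t x''_0), (\eta', 0))$ with $t = |(y',\eta')|/|(x'_0,\xi_0)|$, and dividing by $|V_{\fy_2} 1(t x''_0, 0)| = |\widehat{\fy_2}(0)| > 0$, would yield the same decay for $V_{\fy_1} u$ on an open conic neighborhood of $(x'_0, \xi_0)$, contradicting $(x'_0, \xi_0) \in WF^s(u)$.

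\textbf{Main obstacle.} The delicate step is the geometric claim that the lifted point $((y', t x''_0), (\eta', 0))$ really lies in $\Gamma$ and has norm comparable to $|(y',\eta')|$. This reduces to noting that with $t = |(y',\eta')|/|(x'_0,\xi_0)|$ the normalized lifted point tends to $((x'_0, x''_0), (\xi_0, 0))/|((x'_0, x''_0), (\xi_0, 0))|$ as $(y',\eta')/|(y',\eta')|$ approaches $(x'_0,\xi_0)/|(x'_0,\xi_0)|$, so membership in the open cone $\Gamma$ is preserved for $(y',\eta')$ in a sufficiently narrow conic neighborhood of $(x'_0,\xi_0)$; moreover the identity $|((y', t x''_0), (\eta', 0))| = |(y',\eta')| \sqrt{1 + |x''_0|^2/|(x'_0, \xi_0)|^2}$ gives the required norm equivalence. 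Apart from this, everything else is routine unwinding of definitions.
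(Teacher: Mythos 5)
Your proof is correct and follows essentially the same route as the paper's: reduction to the projection/tensor case by metaplectic invariance (Corollary \ref{symplecticWFs}), the inclusion $\subseteq$ from Proposition \ref{tensorWFs} together with \eqref{WFsone}, the $\Ker A$ part by evaluating the factorized STFT along a ray with windows chosen so that $(u,\fy_1)\widehat{\fy_2}(0)\neq 0$, and the main inclusion by relating a cone around $((x_0',x_0''),(\xi_0,0))$ to a cone around $(x_0',\xi_0)$ --- the paper projects its cone $\Gamma$ down to $T^*\rr d$ whereas you lift a narrow cone up into $\Gamma$, but this is the same estimate. The homogeneity and norm computation you single out as the delicate step is handled correctly, so no gap remains.
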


\begin{proof}
Due to Corollary \ref{symplecticWFs}, and 
$\mu(\chi) f(x) =  |B|^{1/2} f(Bx)$ when $B \in \GL(d,\ro)$ and
\begin{equation*}
\chi = 
\left(
  \begin{array}{cc}
  B^{-1} & 0 \\
  0 & B^{t}
  \end{array}
\right),   
\end{equation*}
it suffices to assume $k := n - d > 0$ and $A = ( I_d \ \ 0)$ where $0 \in \rr {d \times k}$. 
We split variables as $x = (x',x'') \in \rr n$ with $x' \in \rr d$ and $x'' \in \rr k$. 
We need to prove 
\begin{equation}\label{WFequality1}
\begin{aligned}
& WF^s(u \otimes 1) \\
& = \{ (x; \xi',0) \in T^* \rr n \setminus 0: \ (x',\xi') \in WF^s(u) \} \cup \left( \{ 0_d \} \times \rr k \setminus 0 \times \{ 0_n \} \right)
\end{aligned}
\end{equation}
where we use the notation $0_n = 0\in \rr n$.

The inclusion
\begin{align*}
& WF^s(u \otimes 1) \\
& \subseteq \{ (x; \xi',0) \in T^* \rr n \setminus 0: \ (x',\xi') \in WF^s(u) \} \cup \left( \{ 0_d \} \times \rr k \setminus 0 \times \{ 0_n \} \right)
\end{align*}
is a particular case of Proposition \ref{tensorWFs}, combined with \eqref{WFsone}. 

To prove the opposite inclusion, we first show 
\begin{equation}\label{opposite1}
WF^s(u \otimes 1) \supseteq \{ 0_d \} \times \rr k \setminus 0 \times \{ 0_n \}. 
\end{equation}
Let $\fy \in \Sigma_s(\rr d) \setminus 0$ satisfy $(u,\fy) \neq 0$ and let $\psi \in \Sigma_s(\rr k) \setminus 0$ satisfy $\widehat \psi(0) \neq 0$. 
If $x'' \in \rr k \setminus 0$ then due to 
\begin{equation}\label{STFTtensor}
|V_{\fy \otimes \psi} u \otimes 1 (x,\xi)|
= |V_\fy u(x',\xi')| \ |\widehat \psi(-\xi'')|
\end{equation}
we have for any $t>0$ 
\begin{equation*}
|V_{\fy \otimes \psi} u \otimes 1 (t(0,x'';0)| = |V_\fy u(0,0)| \ |\widehat \psi(0)|
= |(u,\fy)| \ |\widehat \psi(0)| \neq 0.
\end{equation*}
Thus $V_{\fy \otimes \psi} u \otimes 1$ does not decay in any conical neighborhood of $(0,x'';0) \in T^* \rr n$,  
which proves \eqref{opposite1}. 

To prove \eqref{WFequality1} it thus suffices to show the inclusion
\begin{equation}\label{WFequality2}
WF^s(u \otimes 1) \supseteq \{ (x; \xi',0) \in T^* \rr n \setminus 0: \ (x',\xi') \in WF^s(u) \}. 
\end{equation}
Suppose $0 \neq (x_0; \xi_0',0) \notin WF^s(u \otimes 1)$. 
If $(x_0',\xi_0') = 0$ then $(x_0',\xi_0') \notin WF^s(u)$, so we may assume $(x_0',\xi_0') \neq 0$. 
We have $(x_0; \xi_0',0) \in \Gamma \subseteq T^* \rr n \setminus 0$ where $\Gamma$ is an open conic set such that
\begin{equation*}
\sup_{(x,\xi) \in \Gamma} e^{A | (x,\xi) |^{1/s}} |V_\fy u(x',\xi')| \ |\widehat \psi(-\xi'')| < \infty
\end{equation*}
for all $A>0$, cf. \eqref{STFTtensor}.

Define the open conic set
\begin{equation*}
\Gamma' = \{ (x',\xi') \in T^* \rr d \setminus 0: \ \exists x'' \in \rr k: \ (x',x'',\xi',0) \in \Gamma \} \subseteq T^* \rr d \setminus 0. 
\end{equation*}
Then $(x_0',\xi_0') \in \Gamma'$ since $(x_0',\xi_0') \neq 0$. 
Let $A>0$ be arbitrary.
Define the functions
\begin{align*}
f(x',\xi') & = e^{A | (x',\xi') |^{1/s}} |V_\fy u(x',\xi')| \ |\widehat \psi(0)|, \quad (x',\xi') \in T^* \rr d, \\
g(x,\xi) & = e^{A | (x',\xi') |^{1/s}} |V_\fy u(x',\xi')| \ |\widehat \psi(-\xi'')| \\
& \leqslant e^{A | (x,\xi) |^{1/s}} |V_\fy u(x',\xi')| \ |\widehat \psi(-\xi'')|, \quad (x,\xi) \in T^* \rr n. 
\end{align*}
For some sequence $(x_n',\xi'_n)_{n \in \no} \subseteq \Gamma'$, where for each $n \in \no$ there exists $x_n'' \in \rr k$ such that $(x_n',x_n'',\xi_n',0) \in \Gamma$, 
we have
\begin{align*}
\sup_{(x',\xi') \in \Gamma'} e^{A | (x',\xi') |^{1/s}} |V_\fy u(x',\xi')|  
& =  |\widehat \psi(0)|^{-1} \lim_{n \rightarrow \infty} f(x_n',\xi_n') \\
& =  |\widehat \psi(0)|^{-1} \lim_{n \rightarrow \infty} g(x_n',x_n'',\xi_n',0) \\
& \lesssim \sup_{(x,\xi) \in \Gamma} e^{A | (x,\xi) |^{1/s}} |V_\fy u(x',\xi')| \ |\widehat \psi(-\xi'')| \\
& < \infty. 
\end{align*}
This means that $(x_0',\xi_0') \notin WF^s(u)$ which proves \eqref{WFequality2}. 
\end{proof} 

Let $s>1/2$ and suppose $a \in C^\infty(\rr {2d})$ satisfies the estimates
\begin{equation}\label{symbolestimate0}
|\pd \alpha a(z)| \lesssim h^{|\alpha|} (\alpha!)^s, \quad \alpha \in \nn {2d}, \quad z \in \rr {2d}, 
\end{equation}
for all $h>0$. 
According to \cite[Theorem 3.4]{Cappiello2} $a^w(x,D)$ is then a continuous operator on $\Sigma_s (\rr d)$ that extends uniquely to a continuous operator on $\Sigma_s' (\rr d)$. In particular $WF^s (a^w(x,D) u)$ is well defined for $u \in \Sigma_s' (\rr d)$.
The next result shows that these pseudodifferential operators are microlocal with respect to the $s$-Gelfand--Shilov wave front set. First we need a lemma.

\begin{lem}\label{symbolSTFT}
If $\fy \in \Sigma_s(\rr {2d}) \setminus 0$ and $a \in C^\infty(\rr {2d})$ satisfies the estimates \eqref{symbolestimate0} 
for all $h>0$, 
then for any $A>0$
\begin{equation*}
\left| V_{\fy} a(x,\xi) \right| \lesssim e^{- A |\xi|^{1/s}}, \quad x \in \rr {2d}, \ \xi \in \rr {2d}. 
\end{equation*}
\end{lem}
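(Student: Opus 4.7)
The idea is to exploit the $s$-Gevrey regularity of $a$ to gain $\xi^\alpha$-factors in $V_\fy a$ by integration by parts, and then pass from the resulting polynomial bound on $\xi^\alpha V_\fy a$ to exponential decay of $V_\fy a$ itself, using the power-series trick from the proof of Proposition \ref{seminormequivalence}.

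First I would write
$$V_\fy a(x,\xi) = \int_{\rr {2d}} a(y)\,\overline{\fy(y-x)}\, e^{-i\la y,\xi\ra}\,dy$$
and, since $a \in L^\infty(\rr {2d})$ by \eqref{symbolestimate0} and $\fy$ is rapidly decreasing, integrate by parts $|\alpha|$ times against the identity $D_y^\alpha e^{-i\la y,\xi\ra} = \xi^\alpha e^{-i\la y,\xi\ra}$ to obtain
$$\xi^\alpha V_\fy a(x,\xi) = \int_{\rr {2d}} D_y^\alpha\!\bigl[a(y)\,\overline{\fy(y-x)}\bigr]\, e^{-i\la y,\xi\ra}\,dy.$$
Then applying Leibniz together with the hypothesis \eqref{symbolestimate0} for an arbitrary $h>0$ and the bound
$$|D^{\alpha-\beta}\fy(y-x)| \leq |\fy|_B\,((\alpha-\beta)!)^s B^{-|\alpha-\beta|} e^{-B|y-x|^{1/s}}$$
coming from Proposition \ref{seminormequivalence}, I would obtain, uniformly in $x$,
$$|\xi^\alpha V_\fy a(x,\xi)| \lesssim \sum_{\beta\leq\alpha} \binom{\alpha}{\beta} h^{|\beta|}(\beta!)^s\,((\alpha-\beta)!)^s B^{-|\alpha-\beta|} \int_{\rr {2d}} e^{-B|z|^{1/s}}\,dz.$$

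Next I would collapse the combinatorial factor via the identity
$$\binom{\alpha}{\beta}(\beta!)^s((\alpha-\beta)!)^s = \binom{\alpha}{\beta}^{1-s}(\alpha!)^s,$$
which is controlled by $C^{|\alpha|}(\alpha!)^s$ for a constant $C=C(s)$ (for $s\geq 1$ this factor is at most $1$; for $1/2<s<1$ one uses $\binom{\alpha}{\beta}\leq 2^{|\alpha|}$). Choosing $h,B$ with $hB\leq 1/2$ reduces the Leibniz sum to a convergent geometric series and leaves the factor $B^{-|\alpha|}$ out front, so that, since $B$ can be made arbitrarily large (after which $h$ is chosen correspondingly small by the hypothesis that \eqref{symbolestimate0} holds for all $h>0$), one obtains
$$\forall\,\rho>0\,:\qquad |\xi^\alpha V_\fy a(x,\xi)| \lesssim_\rho \rho^{|\alpha|}(\alpha!)^s,\qquad x,\xi\in\rr {2d},\ \alpha\in\nn {2d}.$$

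Finally I would convert this uniform Gevrey estimate into the desired exponential decay exactly as in the first step of the proof of Proposition \ref{seminormequivalence}: use $|\xi|^n\leq (2d)^{n/2}\max_{|\alpha|=n}|\xi^\alpha|$ to get $|\xi|^n|V_\fy a(x,\xi)|\lesssim (n!)^s \widetilde\rho^{\,n}$ with $\widetilde\rho=(2d)^{1/2}\rho$, and then sum the Taylor series of $\exp(A|\xi|^{1/s}/s)$, which converges as soon as $\widetilde\rho<(s/A)^s$; since $\rho$ (and hence $\widetilde\rho$) may be chosen as small as desired, this yields $|V_\fy a(x,\xi)|\lesssim e^{-A|\xi|^{1/s}}$ for the given $A>0$. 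The main obstacle is purely bookkeeping in the Leibniz step: one must verify that the $\beta$-summation stays bounded uniformly in $\alpha$ under the joint choice of $h$ small and $B$ large, and that the factor $\binom{\alpha}{\beta}^{1-s}$ does not spoil the Gevrey bound; both are straightforward once one uses the freedom to take $h$ arbitrarily small in \eqref{symbolestimate0}.
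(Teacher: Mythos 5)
Your proof is correct and follows essentially the same route as the paper's: both control the Gevrey regularity of the product $a \cdot \overline{T_x \fy}$ uniformly in $x$ via Leibniz, the hypothesis \eqref{symbolestimate0}, and the exponential-decay seminorms of $\fy$, and then convert those derivative bounds into exponential decay of the Fourier transform in $\xi$. The only (cosmetic) difference is that the paper bounds the full $\mathcal S_{s,h}$ seminorm of $\overline{\fy}\, T_{-x} a$ and then cites \eqref{seminorm2a} of Proposition \ref{seminormequivalence} as a black box, whereas you unwind that step by hand with integration by parts and the power-series summation, never needing the polynomial weights in $y$.
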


\begin{proof}
We start by estimating a seminorm \eqref{gfseminorm} of $\overline{\fy} \, T_{-x} a$. 
From \eqref{symbolestimate0} we obtain for any $h>0$
\begin{align*}
|y^\alpha D_y^\beta (\overline{\fy(y)} a(y+x))|
& \leqslant \sum_{\gamma \leqslant \beta} \binom{\beta}{\gamma} |y^\alpha D^{\beta-\gamma} \fy(y)| \, |D^\gamma a(y+x)| \\
& \lesssim \| \fy \|_{\mathcal S_{s,h/2}} \sum_{\gamma \leqslant \beta} \binom{\beta}{\gamma} (h/2)^{|\beta-\gamma + \alpha+\gamma|} ((\beta-\gamma)! \gamma! \alpha!)^s \\
& \lesssim (h/2)^{|\alpha + \beta|} (\beta! \alpha!)^s \sum_{\gamma \leqslant \beta} \binom{\beta}{\gamma} \\
& \lesssim h^{|\alpha + \beta|} (\beta! \alpha!)^s, \quad x, y \in \rr {2d}, \quad \alpha,\beta \in \nn {2d}. 
\end{align*}
It follows that for any $h>0$ we have the estimate
\begin{equation*}
\| \overline{\fy} \, T_{-x} a \|_{\mathcal S_{s,h}} \leqslant C_h, \quad x \in \rr {2d},  
\end{equation*}
where $C_h>0$. Note that the estimate is uniform over $x \in \rr {2d}$.

By Proposition \ref{seminormequivalence}, or more precisely \eqref{seminorm2a}, we have for any $A>0$
\begin{equation*}
\| \wh{\overline{\fy} \, T_{-x} a} \|_A' \leqslant C_A, \quad x \in \rr {2d},  
\end{equation*}
for some $C_A>0$. 
This gives finally for any $A>0$
\begin{equation*}
\left| V_{\fy} a(x,\xi) \right| = |\wh{a T_x \overline{\fy}}(\xi)| = |\wh{\overline{\fy} \, T_{-x} a}(\xi)|
\lesssim e^{- A |\xi|^{1/s}}, \quad x \in \rr {2d}, \ \xi \in \rr {2d}. 
\end{equation*}
\end{proof}

\begin{prop}\label{microlocalWFs}
If $s>1/2$ and $a \in C^\infty(\rr {2d})$ satisfies the estimates \eqref{symbolestimate0}
for all $h>0$
then 
\begin{equation*}
WF^s (a^w(x,D) u) \subseteq WF^s (u), \quad u \in \Sigma_s'(\rr d). 
\end{equation*}
\end{prop}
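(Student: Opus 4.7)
The plan is to express the short-time Fourier transform of $a^w(x,D) u$ as an integral operator acting on $V_\fy u$ with a kernel that decays rapidly off the diagonal, and then to transfer cone-wise rapid decay from $V_\fy u$ to $V_\fy(a^w(x,D) u)$ by a split-integral argument modeled on Lemma \ref{convolutioninvariance}. Fix $\fy \in \Sigma_s(\rr d)$ with $\| \fy \|_{L^2} = 1$. Combining $V_\fy(a^w(x,D) u)(z) = (2\pi)^{-d/2} ( a^w(x,D) u, \Pi(z)\fy )$ with the STFT inversion formula \eqref{STFTrecon2} applied to $u$, and invoking the continuity of $a^w(x,D)$ on $\Sigma_s(\rr d)$ together with the density of $\Sigma_s$ in $\Sigma_s'$ to justify interchanging the pairing with the integral, yields
\begin{equation*}
V_\fy(a^w(x,D) u)(z) = (2\pi)^{-d} \int_{\rr {2d}} V_\fy u(w) \, K(z,w) \, dw,
\end{equation*}
with kernel $K(z,w) = ( a^w(x,D) \Pi(w) \fy, \Pi(z) \fy )$.

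A standard identity from the Weyl--Wigner calculus (cf.\ \cite{Grochenig1}) rewrites this matrix coefficient, modulo a unimodular phase, as a short-time Fourier transform of the symbol,
\begin{equation*}
|K(z,w)| = \bigl|V_\Phi a\bigl(\tfrac{z+w}{2}, \J(z-w)\bigr)\bigr|,
\end{equation*}
where $\J$ is the symplectic matrix \eqref{Jdef} (orthogonal, so $|\J(z-w)| = |z-w|$) and $\Phi = W(\fy,\fy)$. Since $\fy \in \Sigma_s(\rr d)$, the window $\Phi$ lies in $\Sigma_s(\rr {2d})$, as $W(\fy,\fy)$ is, up to a linear change of coordinates and a partial Fourier transform, the tensor product $\fy \otimes \overline{\fy}$, and $\Sigma_s$ is stable under each of these operations. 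Applying Lemma \ref{symbolSTFT} with this window to $a$, which satisfies \eqref{symbolestimate0} for every $h>0$, yields the off-diagonal decay
\begin{equation*}
|K(z,w)| \lesssim e^{-A|z-w|^{1/s}}, \quad z,w \in \rr {2d},
\end{equation*}
for every $A>0$.

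With the kernel estimate in hand, assume $z_0 \notin WF^s(u)$ and pick an open cone $\Gamma$ around $z_0$ on which $|V_\fy u(w)| \lesssim e^{-A|w|^{1/s}}$ for every $A>0$; globally \eqref{STFTgrowth} provides $|V_\fy u(w)| \lesssim e^{M|w|^{1/s}}$ for some $M \geqslant 0$. Select a smaller open cone $\Gamma'$ about $z_0$ with $\overline{\Gamma'\cap S_{2d-1}} \subseteq \Gamma$. For $z \in \Gamma'$ of sufficiently large modulus, split the integral at $|z-w|=\ep|z|$, choosing $\ep>0$ so small that $|z-w|\leqslant \ep|z|$ forces $w \in \Gamma$. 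On that region the cone decay of $V_\fy u$ combined with integrability of $K(z,\cdot)$ supplies rapid decay, while on the complementary region the kernel decay $e^{-A|z-w|^{1/s}}$ absorbs the growth $e^{M|w|^{1/s}}$ of $V_\fy u$; the two bounds mirror \eqref{intuppsk1} and \eqref{intuppsk2} and yield $|V_\fy(a^w(x,D) u)(z)| \lesssim e^{-A|z|^{1/s}}$ on $\Gamma'$ for every $A>0$, hence $z_0 \notin WF^s(a^w(x,D) u)$. The main technical obstacle is the kernel identity in the second paragraph, requiring the Weyl--Wigner rewriting of $(a^w\Pi(w)\fy,\Pi(z)\fy)$ as an STFT of $a$ against the Gelfand--Shilov window $W(\fy,\fy)$; once this identity and the membership $W(\fy,\fy) \in \Sigma_s(\rr {2d})$ are secured, the rest reduces to an adaptation of Lemma \ref{convolutioninvariance} from genuine convolution to a kernel with off-diagonal decay.
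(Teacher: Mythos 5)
Your proposal is correct and follows essentially the same route as the paper: both express $V_\fy(a^w(x,D)u)$ via the STFT inversion formula as an integral against the matrix coefficients $(a^w(x,D)\Pi(w)\fy,\Pi(z)\fy)$, identify these (via the Weyl--Wigner identity with window $\Phi=W(\fy,\fy)\in\Sigma_s(\rr{2d})$) with $V_\Phi a$ evaluated at a point whose second argument is $\J(z-w)$, and invoke Lemma \ref{symbolSTFT} to get superexponential off-diagonal decay. The only cosmetic difference is that the paper substitutes $w\mapsto z-w$ to dominate the result by a genuine convolution $|V_\fy u|*g$ and then cites Lemma \ref{convolutioninvariance} directly, whereas you re-run the split-integral argument for the kernel operator; both are sound.
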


\begin{proof}
Pick $\fy \in \Sigma_s(\rr d)$ such that $\| \fy \|_{L^2}=1$.
Denoting the formal adjoint of $a^w(x,D)$ by $a^w(x,D)^*$, \eqref{STFTrecon2} gives for $u \in \Sigma_s'(\rr d)$ and $z \in \rr {2d}$
\begin{align*}
V_\varphi (a^w(x,D) u) (z)
& = ( a^w(x,D) u, \Pi(z) \varphi ) \\
& = ( u, a^w(x,D)^* \Pi(z) \varphi ) \\
& = (2 \pi)^{-d} \int_{\rr {2d}} V_\varphi u(w) \, ( \Pi(w) \varphi,a^w(x,D)^* \Pi(z) \varphi ) \, dw \\
& = (2 \pi)^{-d} \int_{\rr {2d}} V_\varphi u(w) \, ( a^w(x,D) \, \Pi(w) \varphi,\Pi(z) \varphi ) \, dw \\
& = (2 \pi)^{-d} \int_{\rr {2d}} V_\varphi u(z-w) \, ( a^w(x,D) \, \Pi(z-w) \varphi,\Pi(z) \varphi ) \, dw.
\end{align*}
By e.g. \cite[Lemma 3.1]{Grochenig2} we have 
\begin{equation*}
|( a^w(x,D) \, \Pi(z-w) \varphi,\Pi(z) \varphi )|
= (2 \pi)^{-d} \left| V_\Phi a \left( z-\frac{w}{2}, \J w \right) \right|
\end{equation*}
where $\Phi$ is the Wigner distribution $\Phi = W(\fy,\fy) \in \Sigma_s(\rr {2d})$. 
Defining 
\begin{equation*}
g(w) = \sup_{z \in \rr {2d}} | ( a^w(x,D) \, \Pi(z-w) \varphi,\Pi(z) \varphi ) |, \quad w \in \rr {2d}, 
\end{equation*}
we thus obtain from Lemma \ref{symbolSTFT}
\begin{equation*}
g \in \bigcap_{A>0} L_{\exp(A | \cdot |^{1/s})}^1(\rr {2d})
\end{equation*}
and
\begin{align}\label{convolution1}
|V_\varphi (a^w(x,D) u) (z)|
& \lesssim |V_\varphi u| * g(z), \quad z \in \rr {2d}.
\end{align}
If $0 \neq z_0 \in T^*\rr d \setminus WF^s(u)$ then there exists an open conic set $\Gamma \subseteq T^* \rr d \setminus 0$ containing $z_0$ such that for all $A>0$
\begin{equation*}
\sup_{z \in \Gamma} e^{A |z|^{1/s}} |V_\varphi u(z)| < \infty.
\end{equation*}
By \eqref{STFTgrowth} we have for some $M > 0$
\begin{equation*}
|V_\varphi u (z)| \lesssim e^{M |z|^{1/s}}, \quad z \in \rr {2d}.
\end{equation*}
It now follows from \eqref{convolution1} and Lemma \ref{convolutioninvariance}
that for any open conic set $\Gamma'$ containing $z_0$ such that $\overline{\Gamma' \cap S_{2d-1}} \subseteq \Gamma$ we have for all $A > 0$
\begin{equation*}
\sup_{z \in \Gamma'} e^{A |z|^{1/s}} |V_\varphi (a^w(x,D) u) (z)| < \infty,
\end{equation*}
which proves that $z_0 \notin WF^s( a^w(x,D) u)$.
We have thus shown
\begin{equation*}
WF^s( a^w(x,D) u) \subseteq WF^s(u).
\end{equation*}
\end{proof} 

\begin{cor}
Let $s>1/2$ and $u \in \Sigma_s'(\rr d)$. 
For any $z \in \rr {2d}$ we have $WF^s( \Pi(z) u) = WF^s(u)$. 
\end{cor}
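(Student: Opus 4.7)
The plan is to realize $\Pi(z)$ itself as a Weyl pseudodifferential operator whose symbol satisfies the hypothesis of Proposition \ref{microlocalWFs}, and then apply that proposition together with the invertibility of $\Pi(z)$ up to a unimodular constant.

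A direct computation with the defining integral of the Weyl quantization shows that for $z = (x_0,\xi_0) \in \rr {2d}$ one has $\Pi(z) = a^w(x,D)$ with
\[
a(y,\eta) = \exp\bigl(i\la y,\xi_0\ra - i\la x_0,\eta\ra + i\la x_0,\xi_0\ra/2\bigr).
\]
Since $a$ is a complex exponential with linear phase, every derivative of $a$ is bounded uniformly in $(y,\eta)$ by $(|x_0|+|\xi_0|)^{|\alpha|}$. To verify \eqref{symbolestimate0} for every $h>0$ I would then observe that $(|x_0|+|\xi_0|)^{|\alpha|} \leqslant C_h h^{|\alpha|} (\alpha!)^s$ holds with $C_h$ depending only on $h$ and $z$, because $(\alpha!)^s$ grows faster than any geometric sequence in $|\alpha|$ when $s>0$.

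Proposition \ref{microlocalWFs} then yields $WF^s(\Pi(z)u) \subseteq WF^s(u)$. For the reverse inclusion I would record the elementary identity $\Pi(-z)\Pi(z) = e^{i\la x_0,\xi_0\ra} I$, obtained by a short manipulation of modulations and translations, valid first on $\Sigma_s(\rr d)$ and extended by duality to $\Sigma_s'(\rr d)$. Since $WF^s$ is insensitive to multiplication by a nonzero scalar, applying the previous inclusion with $-z$ in place of $z$ and $\Pi(z)u$ in place of $u$ gives
\[
WF^s(u) = WF^s\bigl(\Pi(-z)\Pi(z)u\bigr) \subseteq WF^s(\Pi(z)u),
\]
and the two inclusions combine to equality. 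The only step that demands a (brief) verification is the symbol estimate, but the linearity of the phase function makes it immediate, so no genuine obstacle arises.
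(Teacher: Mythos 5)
Your proposal is correct and follows essentially the same route as the paper: identify the Weyl symbol of $\Pi(z)$ as a complex exponential with linear phase, check the estimates \eqref{symbolestimate0} for every $h>0$ using the superexponential growth of $(\alpha!)^s$, apply Proposition \ref{microlocalWFs}, and obtain the reverse inclusion from $\Pi(-z)\Pi(z)=e^{i\la x_0,\xi_0\ra}I$. The paper merely writes the symbol as $a_z(w)=e^{i\la x_0,\xi_0\ra/2+i\la \J z,w\ra}$ and gives the explicit constant $C_h=\exp(sd(|z|/h)^{1/s})$, which is the same computation you sketch.
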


\begin{proof} 
Since $\Pi(-z) \Pi(z) = e^{i \la x, \xi \ra}$ for $z=(x,\xi) \in \rr {2d}$, 
it suffices to show $WF^s( \Pi(z) u) \subseteq WF^s(u)$. 
The latter inclusion follows from Proposition \ref{microlocalWFs} 
if we succeed in showing that the Weyl symbol for $\Pi(z)$ is smooth and satisfies \eqref{symbolestimate0} for any $h>0$. 

We have $\Pi(z) = a_z^w(x,D)$ where
\begin{equation*}
a_z(w) = e^{i\la x,\xi \ra/2 + i \la \J z, w \ra}, \quad z=(x,\xi), \quad w \in \rr {2d}  
\end{equation*}
(cf. the proof of \cite[Lemma~3.7]{Wahlberg1}). 
Thus 
\begin{align*}
|\pd \alpha a_z (w)| 
& \leqslant |z|^{|\alpha|} 
= h^{|\alpha|} (\alpha!)^s \left( \frac{(|z|/h)^{|\alpha|/s}}{\alpha!} \right)^s \\
& \leqslant h^{|\alpha|} (\alpha!)^s \left( \frac{(d(|z|/h)^{1/s})^{|\alpha|}}{|\alpha|!} \right)^s \\
& \leqslant h^{|\alpha|} (\alpha!)^s \exp(s d (|z|/h)^{1/s}), \quad \alpha \in \nn {2d}, \quad w \in \rr {2d}, 
\end{align*}
for any $h>0$. The estimates \eqref{symbolestimate0} are thus satisfied. 
\end{proof}

%%%%%%%%%%%%%%%%%%%%%%%%%%%%
\section{Schr\"odinger equations and solution operators}
\label{sec:formulation}
%%%%%%%%%%%%%%%%%%%%%%%%%%%%

As stated in Section \ref{sec:intro} the ultimate purpose of this paper is to prove results on propagation of the $s$-Gelfand--Shilov wave front set for the initial value Cauchy problem for a class of 
Schr\"odinger equations.
More precisely we study the equation
\begin{equation}\label{schrodeq}
\left\{
\begin{array}{rl}
\partial_t u(t,x) + q^w(x,D) u (t,x) & = 0, \\
u(0,\cdot) & = u_0, 
\end{array}
\right.
\end{equation}
where $s>1/2$, $u_0 \in \Sigma_s'(\rr d)$, $t \geqslant 0$ and $x \in \rr d$.
The Hamiltonian $q^w(x,D)$ has the quadratic form Weyl symbol
\begin{equation*}
q(x,\xi) = \la (x, \xi), Q (x, \xi) \ra, \quad x, \, \xi \in \rr d, 
\end{equation*}
where $Q \in \cc {2d \times 2d}$ is a symmetric matrix with $\re Q \geqslant 0$. 
The special case $\re Q = 0$ will admit to study the equation for $t \in \ro$ instead of $t \geqslant 0$. 

According to \cite[Theorem 3.4]{Cappiello2} $q^w(x,D)$ extends to a continuous operator on $\Sigma_s'(\rr d)$, and we will 
later prove that also the solution operator is continuous on $\Sigma_s'(\rr d)$ for each $t \geqslant 0$ (see Corollary \ref{propagatorcontGF}). 

The \emph{Hamilton map} $F$ corresponding to $q$ is defined by 
\begin{equation*}
\sigma(Y, F X) = q(Y,X), \quad X,Y \in \rr {2d}, 
\end{equation*}
where $q(Y,X)$ is the bilinear polarized version of the form $q$, i.e. $q(X,Y)=q(Y,X)$ and $q(X,X)=q(X)$. 
The Hamilton map $F$ is the matrix 
\begin{equation*}
F = \J Q \in \cc {2d \times 2d} 
\end{equation*}
where $\J$ is the matrix \eqref{Jdef}. 

For $u_0 \in L^2(\rr d)$ the equation \eqref{schrodeq} is solved for $t \geqslant 0$ by 
\begin{equation*}
u(t,x) = e^{-t q^w(x,D)} u_0(x)
\end{equation*}
where the solution operator (propagator) $e^{-t q^w(x,D)}$ is the contraction semigroup that is generated by the operator $-q^w(x,D)$. 
Contraction semigroup means a strongly continuous semigroup with $L^2$ operator norm  $\leqslant 1$ for all $t \geqslant 0$ (cf. \cite{Yosida1}). 
The reason why $- q^w(x,D)$, or more precisely the closure $M_{-q}$ as an unbounded linear operator in $L^2(\rr d)$ of the operator $- q^w(x,D)$ defined on $\cS(\rr d)$, generates such a semigroup is explained in \cite[pp. 425--26]{Hormander2}. 
The contraction semigroup property is a consequence of $M_{- q}$ and its adjoint $M_{-\overline q}$ being \emph{dissipative} operators \cite{Yosida1}. 
For $M_{-q}$ this means
\begin{equation*}
\re (M_{-q} u,u) = (M_{-\re q} u,u) \leqslant 0, \quad u \in D(M_{- q}), 
\end{equation*}
$D(M_{-q}) \subseteq L^2(\rr d)$ denoting the domain of $M_{-q}$, which follows from the assumption $\re Q \geqslant 0$. 
Note the feature $M_{-\overline q} = M_{-q}^*$ that holds for the Weyl quantization. 

Our objective is the propagation of the $s$-Gelfand--Shilov wave front set with $s>1/2$ for the Schr\"odinger propagator $e^{-t q^w(x,D)}$. 
This means that we seek inclusions for 
\begin{equation*}
WF^s(e^{-t q^w(x,D)} u_0)
\end{equation*}
in terms of $WF^s(u_0)$, $F$ and $t \geqslant 0$ for $u_0 \in \Sigma_s'(\rr d)$. 

If $\re Q=0$ then the propagator is given by means of the metaplectic representation. 
To wit, if 
$\re Q=0$ then $e^{-t q^w(x,D)}$ is a group of unitary operators, and we have by \cite[Theorem 4.45]{Folland1}
\begin{equation*}
e^{-t q^w(x,D)} = \mu(e^{-2 i t F}), \quad t \in \ro. 
\end{equation*}
In this case $F$ is purely imaginary and $i F \in \ssp(d,\ro)$, the symplectic Lie algebra, which implies that $e^{-2 i t F} \in \Sp(d,\ro)$ for any $t \in \ro$ \cite{Folland1}.
According to Corollary \ref{symplecticWFs} we thus have if $s > 1/2$
\begin{equation*}
WF^s(e^{-t q^w(x,D)} u_0) = e^{-2 i t F} WF^s(u_0), \quad t \in \ro, \quad u_0 \in \Sigma_s'(\rr d). 
\end{equation*}

The propagation of the $s$-Gelfand--Shilov wave front set is thus exact when $\re Q=0$. 
In the rest of the paper we study the more general assumption $\re Q \geqslant 0$.
Under this assumption we will show in Section \ref{sec:kernelschrod} that the propagator $e^{-t q^w(x,D)}$ is a continuous operator on $\Sigma_s(\rr d)$ and extends uniquely to a continuous operator on $\Sigma_s'(\rr d)$ when $s>1/2$.

%%%%%%%%%%%%%%%%%%%%%%%%%%%%%%%%%%%%%%%%%%%%%%%%%%%%%%%
\section{Propagation of the $s$-Gelfand--Shilov wave front set for certain linear operators}\label{sec:proplinop}
%%%%%%%%%%%%%%%%%%%%%%%%%%%%%%%%%%%%%%%%%%%%%%%%%%%%%%%

In this section we prepare for the results on propagation of the $s$-Gelfand--Shilov wave front set for $e^{-t q^w(x,D)}$ in Section \ref{sec:propsing}.
We show propagation of singularities for linear operators in terms of their Schwartz kernels. 

For $s>1/2$ a kernel $K \in \Sigma_s'(\rr {2d})$ defines a continuous linear map $\cK: \Sigma_s (\rr d) \rightarrow \Sigma_s'(\rr d)$ by
\begin{equation}\label{kernelop}
(\cK f, g) = (K, g \otimes \overline f), \quad f,g \in \Sigma_s(\rr d). 
\end{equation}
Let $\fy \in \Sigma_s(\rr d)$ satisfy $\| \fy \|_{L^2} = 1$ and set $\Phi = \fy \otimes \fy \in \Sigma_s(\rr {2d})$. 
By \cite[Lemma~4.1]{Wahlberg1} we have for $u,\psi \in \Sigma_s(\rr d)$
\begin{equation}\label{TSTFT}
\begin{aligned}
(\cK u, \psi) 
= (2 \pi)^{-2d} \int_{\rr {4d}} V_\Phi K(x,y,\xi,-\eta) \, \overline{V_\fy \psi (x,\xi)} \, V_{\overline \fy} u(y,\eta) \, dx \, dy \, d \xi \, d \eta. 
\end{aligned}
\end{equation}

In the following results we need a definition from \cite{Hormander1}, adapted from the Gabor to the $s$-Gelfand--Shilov wave front set. 
For $K \in \Sigma_s'(\rr {2d})$ we define
\begin{equation}\label{WFproj}
\begin{aligned}
WF_1^s(K) & = \{ (x,\xi) \in T^* \rr d: \ (x, 0, \xi, 0) \in WF^s(K) \} & \subseteq T^* \rr d \setminus 0, \\
WF_2^s(K) & = \{ (y,\eta) \in T^* \rr d: \ (0, y, 0, -\eta) \in WF^s(K) \} & \subseteq T^* \rr d \setminus 0. 
\end{aligned}
\end{equation}

\begin{lem}\label{WFkernelaxes}
If $s>1/2$, $K \in \Sigma_s'(\rr {2d})$ and $WF_1^s(K) =  WF_2^s(K) = \emptyset$, then there exists $C > 1$ such that 
\begin{equation*}
WF^s (K) \subseteq \{ (x,y,\xi,\eta) \in T^* \rr {2d}: \ C^{-1} |(x,\xi)| <  |(y,\eta)| < C |(x,\xi)| \}. 
\end{equation*}
\end{lem}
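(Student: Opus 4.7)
The plan is to argue that the assumption $WF_1^s(K) = WF_2^s(K) = \emptyset$ means $WF^s(K)$ is disjoint from the two ``axis cones''
\begin{equation*}
M_1 = \{(x,0,\xi,0): (x,\xi) \in T^*\rr d \setminus 0\}, \qquad M_2 = \{(0,y,0,-\eta): (y,\eta) \in T^*\rr d \setminus 0\}
\end{equation*}
in $T^*\rr {2d}\setminus 0$, and then to deduce by a compactness-on-the-sphere argument that $WF^s(K)$ is in fact separated from $M_1 \cup M_2$ by a full open conic neighborhood. Once that separation is in hand, the ratio $|(y,\eta)|/|(x,\xi)|$ is forced to stay bounded away from $0$ and $\infty$ on $WF^s(K)$, and this is precisely what the lemma asserts.

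First I would observe, directly from Definition~\ref{wavefronts}, that the complement of $WF^s(K)$ in $T^*\rr {2d}\setminus 0$ is open and conic. Intersecting with the unit sphere $S_{4d-1} \subseteq T^*\rr {2d}$ then gives that $WF^s(K) \cap S_{4d-1}$ is a compact subset of $S_{4d-1}$. The axis cones intersect the sphere in the compact sets $M_j \cap S_{4d-1}$, and the hypothesis $WF_j^s(K) = \emptyset$ together with the conicity of $WF^s(K)$ says exactly that these two compacta are disjoint from $WF^s(K) \cap S_{4d-1}$.

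Since two disjoint compact subsets of a metric space have positive distance, there exist $\delta_1, \delta_2 \in (0,1)$ such that no point in $S_{4d-1}$ with $|(y,\eta)| < \delta_1$ (resp.\ $|(x,\xi)| < \delta_2$) lies in $WF^s(K)$. Homogeneously rescaling and using conicity once more, I conclude
\begin{equation*}
WF^s(K) \subseteq \bigl\{(x,y,\xi,\eta) \neq 0: \ |(y,\eta)| \geqslant \delta_1\, |(x,y,\xi,\eta)|, \ |(x,\xi)| \geqslant \delta_2\, |(x,y,\xi,\eta)|\bigr\}.
\end{equation*}
Using the trivial bounds $|(x,y,\xi,\eta)| \geqslant |(x,\xi)|$ and $|(x,y,\xi,\eta)| \geqslant |(y,\eta)|$, this yields $\delta_1 |(x,\xi)| \leqslant |(y,\eta)| \leqslant \delta_2^{-1} |(x,\xi)|$ on $WF^s(K)$. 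Taking any $C > \max(\delta_1^{-1}, \delta_2^{-1})$ then gives the strict form of the inclusion claimed.

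There is no genuinely hard step here; the only point requiring a little care is the translation between ``open conic neighborhoods of $M_j$ in $T^*\rr {2d}\setminus 0$'' and ``spherical neighborhoods of $M_j \cap S_{4d-1}$'', and the reliance on the fact that $WF^s(K)$ is closed in $T^*\rr {2d}\setminus 0$ (so that its spherical slice is compact). Everything else is standard compactness.
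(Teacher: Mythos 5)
Your proof is correct and rests on exactly the same ingredients as the paper's: the closedness and conicity of $WF^s(K)$, normalization to the unit sphere, and the fact that the hypothesis excludes the two axis cones. The paper phrases this as a sequential-compactness argument by contradiction (a normalized sequence in $WF^s(K)$ with unbounded ratio would converge to a point of $M_2$ or $M_1$ inside the closed set $WF^s(K)$), while you phrase it as positive distance between disjoint compacta on the sphere; these are the same argument in different clothing.
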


\begin{proof}
Suppose 
\begin{equation*}
WK^s (K) \subseteq \{ (x,y,\xi,\eta) \in T^* \rr {2d}: \ |(y,\eta)| < C |(x,\xi)| \}
\end{equation*}
does not hold for any $C>0$. Then for each $n \in \no$ there exists $(x_n,y_n,\xi_n,\eta_n) \in WF^s(K)$ such that $|(y_n,\eta_n)| \geqslant n |(x_n,\xi_n)|$. We may assume that $|(x_n,y_n,\xi_n,\eta_n)| = 1$ for each $n \in \no$ since $WF^s(K)$ is conic. 
Thus $(x_n,\xi_n) \rightarrow 0$ as $n \rightarrow \infty$. 
Passing to a subsequence (without change of notation) and using the closedness of $WF^s(K)$ gives
\begin{equation*}
(x_n,y_n,\xi_n,\eta_n) \rightarrow (0,y,0,\eta) \in WF^s(K), \quad n \rightarrow \infty, 
\end{equation*}
for some $(y,\eta) \in S_{2d-1}$. This implies $(y,-\eta) \in WF_2^s(K)$ which is a contradiction. 

Similarly one shows 
\begin{equation*}
WK^s (K) \subseteq \{ (x,y,\xi,\eta) \in T^* \rr {2d}: \ |(x,\xi)| < C |(y,\eta)| \}
\end{equation*}
for some $C>0$ using $WF_1^s(K) = \emptyset$. 
\end{proof}

In the next result we use the conventional notation (cf. \cite{Hormander1,Hormander2}) for the reflection operator in the fourth $\rr d$ coordinate on $\rr {4d}$
\begin{equation*}
(x,y,\xi,\eta)' = (x,y,\xi,-\eta), \quad x,y,\xi,\eta \in \rr d. 
\end{equation*}

\begin{lem}\label{STFTkernelformula}
Suppose $s>1/2$, $K \in \Sigma_s'(\rr {2d})$ and $WF_1^s(K) =  WF_2^s(K) = \emptyset$. 
Suppose that the linear map $\cK: \Sigma_s (\rr d) \rightarrow \Sigma_s'(\rr d)$ defined by \eqref{kernelop}
is continuous $\cK: \Sigma_s (\rr d) \rightarrow \Sigma_s(\rr d)$ and extends uniquely to a continuous linear operator 
$\cK: \Sigma_s' (\rr d) \rightarrow \Sigma_s'(\rr d)$. 
If $\fy \in \Sigma_s(\rr d)$ satisfies $\| \fy \|_{L^2} = 1$ and $\Phi = \fy \otimes \fy$
then \eqref{TSTFT} extends to $u \in \Sigma_s'(\rr d)$ and $\psi \in \Sigma_s(\rr d)$. 
\end{lem}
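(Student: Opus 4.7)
The plan is to exploit the density of $\Sigma_s(\rr d)$ in $\Sigma_s'(\rr d)$ together with Lebesgue dominated convergence. Fix $\psi \in \Sigma_s(\rr d)$, let $u \in \Sigma_s'(\rr d)$, and pick a sequence $(u_n)_{n \in \no} \subseteq \Sigma_s(\rr d)$ with $u_n \to u$ in $\Sigma_s'(\rr d)$. For each $u_n$ both sides of \eqref{TSTFT} coincide by \cite[Lemma~4.1]{Wahlberg1}. The assumed continuity of $\cK$ on $\Sigma_s'(\rr d)$ yields $(\cK u_n, \psi) \to (\cK u, \psi)$, so it remains to transfer the limit $n \to \infty$ inside the integral on the right-hand side and to verify that the resulting expression is well defined for $u$.

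The required size estimates are immediate. Proposition \ref{seminormequivalence} gives for every $A > 0$
\begin{equation*}
|V_\fy \psi(x,\xi)| \lesssim e^{-A|(x,\xi)|^{1/s}}, \quad (x,\xi) \in \rr {2d},
\end{equation*}
and \eqref{STFTgrowth} provides an $N \geqslant 0$ with
\begin{equation*}
|V_\Phi K(x,y,\xi,-\eta)| \lesssim e^{N|(x,y,\xi,\eta)|^{1/s}}, \quad (x,y,\xi,\eta) \in \rr {4d}.
\end{equation*}
Since $(u_n)$ is convergent in $\Sigma_s'(\rr d)$ it is equicontinuous by Banach--Steinhaus (recall that $\Sigma_s(\rr d)$ is Fr\'echet, hence barrelled), so there is a single $M \geqslant 0$ with $|V_{\overline \fy} u_n(y,\eta)| \lesssim e^{M|(y,\eta)|^{1/s}}$ for all $n$ and the same bound for $u$, while $V_{\overline \fy} u_n \to V_{\overline \fy} u$ pointwise on $\rr {2d}$.

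For the dominant I would split the domain of integration using \lemref{WFkernelaxes}. That lemma supplies $C > 1$ such that $WF^s(K)$, and hence its reflection $WF^s(K)'$ in the last variable, lies in the open cone
\begin{equation*}
\Gamma_C = \{ (x,y,\xi,\eta) \in T^* \rr {2d} \setminus 0 : C^{-1}|(x,\xi)| < |(y,\eta)| < C|(x,\xi)| \}.
\end{equation*}
Choose an open conic neighborhood $\Omega$ of $WF^s(K)'$ whose closure on the unit sphere still lies in $\{ |(y,\eta)| \leqslant 2C|(x,\xi)| \}$. Since $\Omega^c$ is a closed cone disjoint from $WF^s(K)'$ on the sphere, the definition of $WF^s$ combined with a finite covering argument yields
\begin{equation*}
|V_\Phi K(x,y,\xi,-\eta)| \lesssim e^{-A|(x,y,\xi,\eta)|^{1/s}}, \quad (x,y,\xi,\eta) \in \Omega^c,
\end{equation*}
for every $A > 0$, which by itself dominates the integrand on $\Omega^c$ uniformly in $n$. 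On $\Omega$ the cone constraint gives $|(y,\eta)|^{1/s} \leqslant (2C)^{1/s}|(x,\xi)|^{1/s}$, so choosing the decay rate of $V_\fy \psi$ larger than $2(N+M)(1 + (2C)^{1/s})$ and invoking \eqref{exppeetre1} absorbs the exponential growth of both $V_\Phi K$ and $V_{\overline \fy} u_n$ into the super-exponential decay of $V_\fy \psi$, uniformly in $n$. With such a majorant in place, Lebesgue dominated convergence transfers the pointwise limit inside the integral and yields the claimed extension. The main obstacle is precisely this cone-splitting step, where the decay rate $A$ must be carefully matched to the constants $N$, $M$ and $C$ by repeated use of \eqref{exppeetre1}--\eqref{exppeetre2}.
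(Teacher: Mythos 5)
Your proposal is correct and follows the same skeleton as the paper's proof: approximate $u$ by test functions, establish a majorant for the integrand that is uniform in $n$ by splitting $\rr {4d}$ into the cone supplied by Lemma \ref{WFkernelaxes} and its complement, and conclude by dominated convergence. The one genuine difference lies in how the uniform bound $|V_{\overline \fy} u_n(y,\eta)| \lesssim e^{M|(y,\eta)|^{1/s}}$ is obtained. The paper does not take an arbitrary approximating sequence: it constructs $u_n = (2\pi)^{-d}\int_{|z|\leqslant n} V_\fy u(z)\,\Pi(z)\fy\,dz$ explicitly, verifies $u_n \in \Sigma_s(\rr d)$ and $u_n \to u$ by hand, and then reads off the uniform bound (with constant $4M$) directly from the convolution inequality \eqref{STFTconvolution} applied to this particular sequence. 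You instead invoke Banach--Steinhaus: a convergent sequence in the dual of the barrelled space $\Sigma_s(\rr d)$ is equicontinuous, hence bounded by a single seminorm $C\|\cdot\|_{\mathcal S_{s,h}}$, and applying this to $f = M_\eta T_y \overline\fy$ gives the uniform exponential bound. That is valid, but note it hides one more computation than you acknowledge: you must check that $\|M_\eta T_y \overline\fy\|_{\mathcal S_{s,h}} \lesssim e^{M|(y,\eta)|^{1/s}}$ for some $M=M(h,\fy)$, i.e.\ a uniform version of \eqref{STFTgrowth}; this is standard (it is essentially how \eqref{STFTgrowth} is proved) but should be stated. Your route buys generality -- any approximating sequence works -- at the price of an abstract functional-analytic input; the paper's route is longer but entirely elementary and self-contained. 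The remaining estimates (super-exponential decay of $V_\Phi K$ off the cone, absorption of the growth of $V_\Phi K$ and $V_{\overline\fy}u_n$ into the decay of $V_\fy\psi$ inside the cone using $|(y,\eta)| \lesssim |(x,\xi)|$) match the paper's \eqref{integralGamma}--\eqref{integralGammacomp}; just make sure your final exponent retains some negative decay in $(y,\eta)$ (or argue via the polynomial volume of the $(y,\eta)$-section of the cone) so that the majorant is actually integrable over all of $\rr{4d}$, as the paper does by keeping an explicit factor $e^{-\ep|(y,\eta)|^{1/s}}$.
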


\begin{proof}
Let $\fy \in \Sigma_s(\rr d)$ satisfy $\| \fy \|_{L^2} = 1$ and let $u \in \Sigma_s'(\rr d)$. 
By \eqref{STFTgrowth} we have for some $M \geqslant 0$
\begin{equation}\label{STFTupperbound1}
|V_\varphi u (z)| \lesssim e^{M |z|^{1/s}}, \quad z \in \rr {2d}.
\end{equation}
Define for $n \in \no$
\begin{equation*}
u_n = (2 \pi)^{-d} \int_{|z| \leqslant n} V_\fy u(z) \Pi(z) \fy \, dz. 
\end{equation*}

In order to verify $u_n \in \Sigma_s(\rr d)$ we use the seminorms \eqref{seminorm2} for $A>0$. 
We have for any $A>0$
\begin{align*}
e^{A|w|^{1/s}} |V_\fy u_n (w)| 
& \lesssim \int_{|z| \leqslant n} |V_\fy u(z)| \, e^{A|w|^{1/s}} |V_\fy \fy(w-z)| \, dz \\
& \lesssim \int_{|z| \leqslant n} e^{M |z|^{1/s} + A|w|^{1/s} - 2 A |w-z|^{1/s}} \, dz \\
& \lesssim \int_{|z| \leqslant n} e^{(M+2A) |z|^{1/s}} \, dz \\
& \lesssim 1, \quad w \in \rr {2d}. 
\end{align*}
It follows that $u_n \in \Sigma_s(\rr d)$ for $n \in \no$. 

To prove that $u_n \rightarrow u$ in $\Sigma_s'(\rr d)$ as $n \rightarrow \infty$ we pick $\psi \in \Sigma_s(\rr d)$. 
From \eqref{STFTupperbound1}, the estimate 
(cf. Proposition \ref{seminormequivalence} and \cite[Theorem~2.4]{Toft1})
\begin{equation}\label{STFTupperbound2}
|V_\varphi \psi (z)| \lesssim e^{-A |z|^{1/s}}, \quad z \in \rr {2d}, \quad A>0,
\end{equation}
we obtain by means of dominated convergence and \eqref{STFTrecon2}
\begin{align*}
(u_n,\psi) & = (2 \pi)^{-d} \int_{|z| \leqslant n} V_\fy u (z) \ \overline{V_\fy \psi (z)} \, dz \\
& \longrightarrow  (2 \pi)^{-d} \int_{\rr {2d}} V_\fy u(z) \ \overline{V_\fy \psi (z)} \, dz \\
& = (u,\psi), \quad n \rightarrow \infty. 
\end{align*}
This proves the claim that $u_n \rightarrow u$ in $\Sigma_s'(\rr d)$ as $n \rightarrow \infty$. 

We also need the estimate (cf. \cite[Eq.~(11.29)]{Grochenig1})
\begin{equation*}
|V_{\overline{\fy}} u_n (z)| \leqslant (2 \pi)^{-d} |V_\varphi u| * |V_{\overline{\fy}} \fy| (z), \quad z \in \rr {2d}, 
\end{equation*}
which in view of \eqref{STFTupperbound1} and \eqref{STFTupperbound2} with $\psi$ replaced by $\fy$ and conjugation of the window function gives the bound
\begin{equation}\label{STFTupperbound3}
|V_{\overline{\fy}} u_n (z)| \lesssim e^{4M |z|^{1/s}}, \quad z \in \rr {2d}, \quad n \in \no, 
\end{equation}
that holds uniformly over $n \in \no$. 

We are now in a position to assemble the arguments into a proof of formula 
\eqref{TSTFT} for $u \in \Sigma_s'(\rr d)$ and $\psi \in \Sigma_s(\rr d)$. 
Using \eqref{TSTFT} for $u_n$ gives
\begin{equation}\label{STFTequalitylimit}
(\cK u, \psi) 
= \lim_{n \rightarrow \infty} 
(2 \pi)^{-2d} \int_{\rr {4d}} V_\Phi K(x,y,\xi,-\eta) \, \overline{V_\fy \psi (x,\xi)} \, V_{\overline \fy} u_n(y,\eta) \, dx \, dy \, d \xi \, d \eta.
\end{equation}
Since $V_{\overline \fy} u_n(y,\eta) \rightarrow V_{\overline \fy} u(y,\eta)$ as $n \rightarrow \infty$ for all $(y,\eta) \in \rr {2d}$, the formula \eqref{TSTFT} follows from dominated convergence if we can show that the modulus of the integrand in \eqref{STFTequalitylimit} is bounded by an integrable function that does not depend on $n \in \no$. 

For $C > 1$ define the open conic set
\begin{equation*}
\Gamma = \{ (x,y,\xi,\eta) \in T^* \rr {2d}: \ C^{-1} |(x,\xi)| <  |(y,\eta)| < C |(x,\xi)| \} \subseteq T^* \rr {2d} \setminus 0. 
\end{equation*}
If $C$ is chosen properly then we have $WK^s (K) \subseteq \Gamma$ by  Lemma \ref{WFkernelaxes}. 

Let $\ep>0$. 
For the integral in \eqref{STFTequalitylimit} over $\Gamma'$ we may estimate, using the estimate
\begin{equation}\label{kernelSTFT}
|V_\Phi K(x,y,\xi,-\eta)| \lesssim e^{B |(x,y,\xi,\eta)|^{1/s}}, \quad (x,y,\xi,\eta) \in \rr {4d}, 
\end{equation}
for some $B \geqslant 0$ (cf. \eqref{STFTgrowth}), and \eqref{STFTupperbound2}, \eqref{STFTupperbound3}, for any $A>0$
\begin{equation}\label{integralGamma}
\begin{aligned}
& \int_{\Gamma'} |V_\Phi K(x,y,\xi,-\eta)| \,|V_\fy \psi (x,\xi)| \, |V_{\overline \fy} u_n(y,\eta)| \, dx \, dy \, d \xi \, d \eta \\
& \lesssim \int_{\Gamma'} e^{-\ep |(y,\eta)|^{1/s}} e^{(2B-A) |(x,\xi)|^{1/s} + (2B+4M +\ep) |(y,\eta)|^{1/s}}  \, dx \, dy \, d \xi \, d \eta \\
& \leqslant \int_{\Gamma'} e^{-\ep |(y,\eta)|^{1/s}} e^{(2B-A+C^{1/s}(2B+4M +\ep)) |(x,\xi)|^{1/s}} \, dx \, dy \, d \xi \, d \eta 
< \infty, 
\end{aligned}
\end{equation}
in the final inequality assuming that $A>0$ is sufficiently large.

Since $\Gamma \subseteq T^* \rr {2d} \setminus 0$ is open and $WK^s (K) \subseteq \Gamma$  
we have for any $A>0$
\begin{equation*}
|V_\Phi K(x,y,\xi,-\eta)| \lesssim e^{-A |(x,y,\xi,\eta)|^{1/s}}, \quad (x,y,\xi,-\eta) \in \rr {4d} \setminus \Gamma. 
\end{equation*}
This gives for the integral in \eqref{STFTequalitylimit} over $\rr {4d} \setminus \Gamma'$, again using \eqref{STFTupperbound3}, 
\begin{equation}\label{integralGammacomp}
\begin{aligned}
& \int_{\rr {4d} \setminus \Gamma'} |V_\Phi K(x,y,\xi,-\eta)| \,|V_\fy \psi (x,\xi)| \, |V_{\overline \fy} u_n(y,\eta)| \, dx \, dy \, d \xi \, d \eta \\
& \lesssim \int_{\rr {4d} \setminus \Gamma'} e^{-A |(x,y,\xi,\eta)|^{1/s} + 4M |(y,\eta)|^{1/s} } \, dx \, dy \, d \xi \, d \eta \\
& \leqslant \int_{ \rr {4d} } e^{ (4M-A) |(x,y,\xi,\eta)|^{1/s} } \, dx \, dy \, d \xi \, d \eta  < \infty
\end{aligned}
\end{equation}
provided $A>0$ is sufficiently large.

Combined, \eqref{integralGamma} and \eqref{integralGammacomp} show our claim that the modulus of the integrand in \eqref{STFTequalitylimit}
is bounded by an integrable function that does not depend on $n \in \no$. 
\end{proof}

Since
\begin{equation*}
\overline{V_\fy \Pi(t,\theta) \fy (x,\xi)} = e^{i \la x, \xi - \theta \ra} V_\fy \fy (t-x,\theta-\xi), \quad t,x,\theta,\xi \in \rr d, 
\end{equation*}
we obtain from Lemma \ref{STFTkernelformula} with $\psi = \Pi(t,\theta) \fy$ for $(t,\theta) \in \rr {2d}$, 
$u \in \Sigma_s'(\rr d)$, $\fy \in \Sigma_s(\rr d)$ and $\| \fy \|_{L^2} = 1$
\begin{equation}\label{STFTop1}
\begin{aligned}
& V_\fy(\cK u) (t, \theta) 
= (\cK u, \Pi(t,\theta) \fy) \\
& = (2 \pi)^{-2d} \int_{\rr {4d}} e^{i \la x,\xi-\theta \ra} V_\Phi K (x,y,\xi,-\eta) V_\fy \fy (t-x,\theta-\xi) \, V_{\overline \fy} u(y,\eta) \, dx \, dy \, d \xi \, d \eta. 
\end{aligned}
\end{equation}
This formula will be useful in the proof of Theorem \ref{WFphaseincl}. 

The following result concerns propagation of singularities for linear operators and 
is a version of H\"ormander's \cite[Proposition~2.11]{Hormander1} adapted to the $s$-Gelfand--Shilov wave front set. 
We use the relation mapping notation 
\begin{align*}
& WF^s(K)' \circ WF^s (u) \\
& = \{ (x,\xi) \in T^* \rr d: \,  \exists (y,\eta) \in WF^s (u) : \, (x,y,\xi,-\eta) \in WF^s(K) \}. 
\end{align*}

\begin{thm}\label{WFphaseincl}
Let $s>1/2$ and let $\cK$ be the continuous linear operator \eqref{kernelop}
defined by the Schwartz kernel $K \in \Sigma_s'(\rr {2d})$. 
Suppose $\cK: \Sigma_s(\rr d) \rightarrow \Sigma_s(\rr d)$ is continuous and extends uniquely to a continuous linear operator $\cK: \Sigma_s' (\rr d) \rightarrow \Sigma_s' (\rr d)$, 
and suppose 
\begin{equation}\label{WKjempty}
WF_1^s(K) = WF_2^s(K) = \emptyset.
\end{equation}  
Then for $u \in \Sigma_s'(\rr d)$
we have
\begin{equation}\label{WFphaseinclusion}
WF^s (\cK u) \subseteq WF^s(K)' \circ WF^s (u).  
\end{equation}
\end{thm}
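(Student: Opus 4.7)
The plan is to use the integral representation \eqref{STFTop1} for $V_\fy(\cK u)(t,\theta)$ and, given a point $(t_0,\theta_0)\notin WF^s(K)'\circ WF^s(u)$ with $(t_0,\theta_0)\neq 0$, to show that the integrand on the right hand side of \eqref{STFTop1} decays at rate $e^{-A|(t,\theta)|^{1/s}}$ for every $A>0$ uniformly on some open conic neighborhood of $(t_0,\theta_0)$. Three distinct sources of decay are available: the Gelfand--Shilov decay of $V_\fy\fy\in\Sigma_s(\rr{2d})$ in the variable $(t-x,\theta-\xi)$, the conic decay of $V_\Phi K$ off $WF^s(K)$, and the conic decay of $V_{\overline\fy}u$ off $WF^s(u)$. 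These will be combined by a carefully chosen partition of $\rr{4d}$, and the at-most-Gelfand--Shilov-type growth bounds of the factors, guaranteed by \eqref{STFTgrowth}, will be absorbed via \eqref{exppeetre1} and \eqref{exppeetre2}.

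First I would fix a small open conic neighborhood $\Gamma_0$ of $(t_0,\theta_0)$ and take $(t,\theta)\in\Gamma_0$ of large modulus. Split $\rr{4d}$ into $R_1=\{|(x-t,\xi-\theta)|>\delta|(t,\theta)|\}$ and its complement $R_2$. On $R_1$ the Gelfand--Shilov decay of $V_\fy\fy(t-x,\theta-\xi)$ dominates the growth of the two remaining factors; picking the decay parameter large and using \eqref{exppeetre2} yields an overall bound by $e^{-A'|(t,\theta)|^{1/s}}$ for any $A'>0$. On $R_2$, $(x,\xi)$ lies in a slightly enlarged cone around $(t_0,\theta_0)$, so in particular $|(x,\xi)|\asymp|(t,\theta)|$.

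Within $R_2$ I would split further according to $(y,\eta)$. By Lemma \ref{WFkernelaxes} and hypothesis \eqref{WKjempty}, $WF^s(K)$ sits inside a conic set where $C^{-1}|(x,\xi)|<|(y,\eta)|<C|(x,\xi)|$; outside a slightly larger such cone the factor $V_\Phi K$ decays at rate $e^{-A|\cdot|^{1/s}}$ for every $A>0$, and the contribution is handled as on $R_1$. On the remaining region $|(y,\eta)|\asymp|(x,\xi)|\asymp|(t,\theta)|$. Using compactness of $WF^s(u)\cap S_{2d-1}$ together with openness of its complement, I would cover $WF^s(u)\cap S_{2d-1}$ by finitely many small open cones $\Gamma^{(j)}$ around directions $(y^{(j)},\eta^{(j)})\in WF^s(u)$. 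On the complement of $\bigcup_j\Gamma^{(j)}$ in $\rr{2d}\setminus 0$ the factor $V_{\overline\fy}u$ decays at rate $e^{-A|(y,\eta)|^{1/s}}$ for every $A>0$, which by the equivalence $|(y,\eta)|\asymp|(t,\theta)|$ translates to the desired decay rate in $(t,\theta)$. For each $j$, the hypothesis $(t_0,\theta_0)\notin WF^s(K)'\circ WF^s(u)$ gives $(t_0,y^{(j)},\theta_0,-\eta^{(j)})\notin WF^s(K)$, and hence an open conic neighborhood of this point in $\rr{4d}$ on which $V_\Phi K$ decays at arbitrary Gelfand--Shilov rate.

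The main obstacle is the simultaneous shrinking: one must choose $\Gamma_0$, the parameter $\delta$, and the widths of the $\Gamma^{(j)}$ small enough so that whenever $(t,\theta)\in\Gamma_0$, $(x,\xi)$ is close to $(t,\theta)$ in the sense of $R_2$, and $(y,\eta)/|(y,\eta)|\in\Gamma^{(j)}$, the tuple $(x,y,\xi,-\eta)$ lies in the chosen good conic neighborhood of $(t_0,y^{(j)},\theta_0,-\eta^{(j)})$. This is a routine closedness-plus-compactness argument using that $WF^s(K)$ is closed and conic and that the complement of $WF^s(K)$ contains each of the finitely many exceptional directions together with an open cone. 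Once this is arranged, each piece of the decomposition contributes a bound of the form $e^{-A'|(t,\theta)|^{1/s}}$ with $A'>0$ arbitrary, and summing the finitely many pieces establishes $(t_0,\theta_0)\notin WF^s(\cK u)$, which is the desired inclusion \eqref{WFphaseinclusion}.
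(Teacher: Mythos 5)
Your overall strategy is the one the paper uses: the representation \eqref{STFTop1}, the three sources of decay ($V_\fy\fy$, $V_\Phi K$ off $WF^s(K)$, $V_{\overline\fy}u$ off $WF^s(u)$), and a conic decomposition of $\rr{4d}$. However, there is a genuine gap in your first step, the estimate on $R_1=\{|(x-t,\xi-\theta)|>\delta|(t,\theta)|\}$. On $R_1$ the variable $(y,\eta)$ is completely unconstrained, and the integrand is bounded only by
\begin{equation*}
e^{B|(x,y,\xi,\eta)|^{1/s}}\, e^{A|(t,\theta)|^{1/s}}\, e^{-c|(t-x,\theta-\xi)|^{1/s}}\, e^{M|(y,\eta)|^{1/s}},
\end{equation*}
whose integral in $(y,\eta)$ for fixed $(x,\xi)$ diverges: the decay of $V_\fy\fy$ acts only in the $(t-x,\theta-\xi)$ variable and cannot dominate the exponential growth of $V_\Phi K$ and $V_{\overline\fy}u$ in $(y,\eta)$. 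So the claim that the Gelfand--Shilov decay of $V_\fy\fy$ ``dominates the growth of the two remaining factors'' on all of $R_1$ is false as stated.

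The fix is to reverse the order of your decomposition, which is exactly what the paper does. First remove an open conic neighborhood $\Gamma_1$ of $WF^s(K)$: on $\rr{4d}\setminus\Gamma_1'$ the factor $V_\Phi K$ decays like $e^{-b|(x,y,\xi,\eta)|^{1/s}}$ for every $b$, which controls the $(y,\eta)$-integration outright. By Lemma \ref{WFkernelaxes} and \eqref{WKjempty} the neighborhood $\Gamma_1$ can be taken inside $\{C^{-1}|(x,\xi)|<|(y,\eta)|<C|(x,\xi)|\}$, so that on the remaining region one has $|(y,\eta)|\asymp|(x,\xi)|$ everywhere; only then does your split according to whether $(x,\xi)$ is close to the direction of $(t_0,\theta_0)$ become effective, because the growth in $(y,\eta)$ can now be traded for growth in $(x,\xi)$ and absorbed either by the decay of $V_\fy\fy(t-x,\theta-\xi)$ (when $(x,\xi)$ stays away from the cone around $(t_0,\theta_0)$) or by the decay of $V_{\overline\fy}u$ (when $(y,\eta)$ stays away from $WF^s(u)$). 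Your final compactness argument replacing the appeal to \cite[Lemma~4.2]{Wahlberg1} is workable, provided you account for the fact that $WF^s(K)$ is conic only under simultaneous scaling of $(x,\xi)$ and $(y,\eta)$, so the exceptional directions to be covered are all points $(t_0,\mu y^{(j)},\theta_0,-\mu\eta^{(j)})$ with $\mu$ in a compact interval determined by the constant of Lemma \ref{WFkernelaxes}.
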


\begin{proof}
It follows from \eqref{STFTgrowth} that \eqref{kernelSTFT} is satisfied for some $B \geqslant 0$ if $\Phi \in \Sigma_s(\rr {2d}) \setminus 0$. 

Denote by 
\begin{align*}
p_{1,3}(x,y,\xi,\eta) & = (x,\xi), \\
p_{2,-4}(x,y,\xi,\eta) & = (y,-\eta), \quad x,y,\xi, \eta \in \rr d, 
\end{align*}
the projections $\rr {4d} \rightarrow \rr {2d}$ onto the first and the third $\rr d$ coordinate, 
and onto the second and the fourth $\rr d$ coordinate with a change sign in the latter, respectively. 

By Lemma \ref{WFkernelaxes} there exists $c>1$ such that 
\begin{equation*}
WF^s (K) \subseteq \Gamma_1 = \{ (x,y,\xi,\eta) \in T^* \rr {2d}: \ c^{-1} |(x,\xi)| <  |(y,\eta)| < c |(x,\xi)| \}, 
\end{equation*}
and, defining
\begin{align*}
\Gamma_{1,3} & = \{(x,y,\xi,\eta) \in T^* \rr {2d}: \ c |(x,\xi)| \leqslant |(y,\eta)| \}, \\
\Gamma_{2,4} & = \{(x,y,\xi,\eta) \in T^* \rr {2d}: \ c |(y,\eta)| \leqslant  |(x,\xi)| \}, \\
\end{align*}
we thus have
\begin{equation}\label{inclusionG1}
\Gamma_1 \subseteq \rr {4d} \setminus (\Gamma_{1,3} \cup \Gamma_{2,4} ). 
\end{equation}

We show the inclusion  \eqref{WFphaseinclusion} by showing that 
\begin{equation}\label{assumption1}
0 \neq (t_0,\theta_0) \notin WF^s(K)' \circ WF^s (u) 
\end{equation}
implies $(t_0,\theta_0) \notin WF^s (\cK u)$.
Thus we suppose \eqref{assumption1}. 
By \cite[Lemma~4.2]{Wahlberg1} we may assume that $(t_0,\theta_0) \in \Omega_0$ and $\overline \Omega_0 \cap WF^s(K)' \circ \overline \Omega_2 = \emptyset$
where $\Omega_0, \Omega_2 \subseteq T^* \rr d \setminus 0$ are conic, open and  $WF^s (u) \subseteq \Omega_2$. 
(The assumption of \cite[Lemma~4.2]{Wahlberg1} corresponds to the assumption \eqref{WKjempty}.)
Here we use the notation $\overline \Omega \subseteq T^* \rr d \setminus 0$ for the closure  in the usual topology in $T^* \rr d \setminus 0$ of a conical subset $\Omega \subseteq T^* \rr d \setminus 0$. 

Hence
\begin{equation*}
\overline \Omega_0 \cap p_{1,3} \left( WF^s(K) \cap p_{2,-4}^{-1} \, \overline \Omega_2 \right) = \emptyset, 
\end{equation*}
or, equivalently, 
\begin{equation*}
p_{1,3}^{-1} \, \overline{\Omega}_0 \cap WF^s(K) \cap p_{2,-4}^{-1} \, \overline \Omega_2 = \emptyset.  
\end{equation*}

Due to assumption \eqref{WKjempty} we may strengthen this into 
\begin{equation*}
p_{1,3}^{-1} \, (\overline{\Omega}_0 \cup \{ 0 \} ) \setminus 0 \cap WF^s(K) \cap p_{2,-4}^{-1} \, (\overline \Omega_2 \cup \{ 0 \} ) \setminus 0 = \emptyset.  
\end{equation*}
Since $p_{1,3}^{-1} \, (\overline{\Omega}_0 \cup \{ 0 \} ) \setminus 0$ and $p_{2,-4}^{-1} \, (\overline \Omega_2 \cup \{ 0 \} ) \setminus 0$ are closed conic subsets of $\rr {4d} \setminus 0$, 
decreasing $\Gamma_1 \subseteq \rr {4d} \setminus 0$ if necessary, 
there exist open conic subsets $\Gamma_0, \Gamma_1, \Gamma_2 \subseteq \rr {4d} \setminus 0$ such that 
\begin{equation*}
WF^s(K) \subseteq \Gamma_1, \qquad
p_{1,3}^{-1} \, \overline{\Omega}_0 \subseteq \Gamma_0, \qquad
p_{2,-4}^{-1} \, \overline{\Omega}_2 \subseteq \Gamma_2, 
\end{equation*}
and
\begin{equation}\label{intersection1}
\Gamma_0 \cap \Gamma_1 \cap \Gamma_2 = \emptyset. 
\end{equation}

Let $\Sigma_0 \subseteq T^* \rr d \setminus 0$ be an open conic set such that 
$(t_0,\theta_0) \in \Sigma_0$ and $\overline{\Sigma_0 \cap S_{2d-1}} \subseteq \Omega_0$. 
Let $\fy \in \Sigma_s(\rr d)$, $\| \fy \|_{L^2} = 1$ and $\Phi = \fy \otimes \fy$.
From Lemma \ref{STFTkernelformula} we know that formula \eqref{STFTop1} holds. 
Therefore we have for any $A>0$
\begin{equation}\label{estimand1}
\begin{aligned}
& e^{A |(t,\theta)|^{1/s}} |V_\fy(\cK u) (t, \theta)| \\
& \lesssim
\int_{\rr {4d}} |V_\Phi K(x,y,\xi,-\eta)| \, e^{A |(t,\theta)|^{1/s}} \, | V_\fy \fy (t-x,\theta-\xi)| 
\, |V_{\overline \fy} u(y,\eta)| \, dx \, dy \, d \xi \, d \eta. 
\end{aligned}
\end{equation}
We will show that this integral is bounded when $(t,\theta) \in \Sigma_0$ for any $A>0$ which proves that $(t_0,\theta_0) \notin WF^s (\cK u)$. 

Consider first the right hand side integral over $(x,y,\xi,-\eta) \in \rr {4d} \setminus \Gamma_1$. 
We have for any $b>0$
\begin{equation}\label{WFcompl}
|V_\Phi K(x,y,\xi,-\eta)| \lesssim e^{-b |(x,y,\xi,\eta)|^{1/s}} , \quad (x,y,\xi,-\eta) \in \rr {4d} \setminus \Gamma_1, 
\end{equation}
due to $WF^s(K) \subseteq \Gamma_1$ and $\Gamma_1 \subseteq \rr {4d} \setminus 0$ being open. 
By \eqref{STFTgrowth} we have 
\begin{equation}\label{STFTu}
|V_\varphi u (z)| \lesssim e^{M |z|^{1/s}}, \quad z \in \rr {2d}, 
\end{equation}
for some $M \geqslant 0$, and by Proposition \ref{seminormequivalence} we have 
\begin{equation}\label{STFTphi}
|V_\varphi \varphi (z)| \lesssim e^{-c |z|^{1/s}}, \quad z \in \rr {2d}, 
\end{equation}
for any $c>0$. 
Thus 
\begin{equation}\label{estimateA}
\begin{aligned}
& \int_{\rr {4d} \setminus \Gamma_1'} |V_\Phi K(x,y,\xi,-\eta)| \, e^{A |(t,\theta)|^{1/s}}  \, | V_\fy \fy (t-x,\theta-\xi)| 
\, |V_{\overline \fy} u(y,\eta)| \, dx \, dy \, d \xi \, d \eta \\
& \lesssim \int_{\rr {4d} \setminus \Gamma_1'} e^{-b |(x,y,\xi,\eta)|^{1/s} + 2 A |(x,\xi)|^{1/s} + M |(y,\eta)|^{1/s}} \, e^{2A |(t-x,\theta-\xi)|^{1/s}} \, | V_\fy \fy (t-x,\theta-\xi)| \, dx \, dy \, d \xi \, d \eta \\
& \lesssim \int_{\rr {4d}} e^{(2 A + M - b) |(x,y,\xi,\eta)|^{1/s}} \, dx \, dy \, d \xi \, d \eta
< \infty
\end{aligned}
\end{equation}
if $b > 0$ is chosen sufficiently large. The estimate holds for all $(t,\theta) \in \rr {2d}$.  

It remains to estimate the right hand side integral \eqref{estimand1} over $(x,y,\xi,-\eta) \in \Gamma_1$. 
By \eqref{inclusionG1} and \eqref{intersection1} we have $\Gamma_1 \subseteq G_1 \cup G_2$ where
\begin{equation*}
G_1 = \rr {4d} \setminus (\Gamma_{1,3} \cup \Gamma_{2,4} \cup \Gamma_0), \quad G_2 = \rr {4d} \setminus (\Gamma_{1,3} \cup \Gamma_{2,4} \cup \Gamma_2 ). 
\end{equation*}
When $(x,y,\xi,-\eta) \in \Gamma_1$ we have $|(x,\xi)|\asymp |(y,\eta)|$.  
First we study $(x,y,\xi,-\eta) \in G_1$. Then $(x,y,\xi,-\eta) \notin \Gamma_0$ which implies $(x,\xi) \notin \Omega_0$. 
There exists $\delta>0$ such that
\begin{equation*}
|(x,\xi) - (t,\theta)| \geqslant \delta |(x,\xi)|, \quad (x,\xi) \notin \Omega_0, \quad (t,\theta) \in \Sigma_0. 
\end{equation*}

Let $\ep>0$. 
For $(t,\theta) \in \Sigma_0$ we obtain with the aid of \eqref{kernelSTFT}
using $|(x,\xi)|\asymp |(y,\eta)|$, \eqref{STFTu} and \eqref{STFTphi}, where $B,M \geqslant 0$ are fixed and $c>0$ is arbitrary, with $B_1>0$ a new constant that depends on $B, M, \ep, s$, 
\begin{equation}\label{estimateB}
\begin{aligned}
& \int_{G_1'} |V_\Phi K(x,y,\xi,-\eta)| \, e^{A |(t,\theta)|^{1/s}} \,  | V_\fy \fy (t-x,\theta-\xi)| \, 
|V_{\overline \fy} u(y,\eta)| \, dx \, dy \, d \xi \, d \eta \\
& \lesssim \int_{G_1'} e^{B |(x,y,\xi,\eta)|^{1/s} + 2A |(x,\xi)|^{1/s} + M |(y,\eta)|^{1/s} } \\
& \qquad \qquad \qquad \qquad \qquad  \times  e^{2A |(t-x,\theta-\xi)|^{1/s}}  \,  | V_\fy \fy (t-x,\theta-\xi)| \, dx \, dy \, d \xi \, d \eta \\
& \lesssim \int_{G_1'} e^{-\ep |(x,y,\xi,\eta)|^{1/s} + B_1 |(x,\xi)|^{1/s} - c |(t-x,\theta-\xi)|^{1/s} } \, dx \, dy \, d \xi \, d \eta \\
& \lesssim \int_{\rr {4d}} e^{-\ep |(x,y,\xi,\eta)|^{1/s} + (B_1- c \delta^{1/s}) |(x,\xi)|^{1/s} } \, dx \, dy \, d \xi \, d \eta \\
& < \infty
\end{aligned}
\end{equation}
provided $c \geqslant B_1 \delta^{-1/s}$. 

Finally we study $(x,y,\xi,-\eta) \in G_2$. Then $(x,y,\xi,-\eta) \notin \Gamma_2$ so we have $(y,\eta) \notin \Omega_2$. 
Hence $(y,\eta) \in G$ where $G \subseteq T^* \rr d$ is closed, conic and does not intersect $WF^s(u)$. 
We obtain with the aid of \eqref{kernelSTFT} for any $(t,\theta) \in T^* \rr d$,
using $|(x,\xi)|\asymp |(y,\eta)|$ and \eqref{STFTphi}, 
for $B \geqslant 0$ fixed and $B_1>0$ a new constant that depends on $A, B, \ep,  s$, 
\begin{equation}\label{estimateC}
\begin{aligned}
& \int_{G_2'} |V_\Phi K(x,y,\xi,-\eta)| \, e^{A |(t,\theta)|^{1/s}} \,  | V_\fy \fy (t-x,\theta-\xi)| 
\, |V_{\overline \fy} u(y,\eta)| \, dx \, dy \, d \xi \, d \eta \\
& \lesssim \int_{G_2'} e^{B |(x,y,\xi,\eta)|^{1/s} + 2A |(x,\xi)|^{1/s}} \, |V_{\overline \fy} u(y,\eta)|  \, dx \, dy \, d \xi \, d \eta \\
& \lesssim \int_{G_2'} e^{-\ep |(x,y,\xi,\eta)|^{1/s} + B_1 |(y,\eta)|^{1/s}} \, |V_{\overline \fy} u(y,\eta)|  \, dx \, dy \, d \xi \, d \eta \\
& \lesssim \sup_{w \in G} \,e^{B_1 |w|^{1/s}} \, |V_{\overline \fy} u(w)|
< \infty. 
\end{aligned}
\end{equation}

We can now combine \eqref{estimand1}, \eqref{estimateA}, $\Gamma_1 \subseteq G_1 \cup G_2$, \eqref{estimateB} and \eqref{estimateC} to conclude
\begin{equation*}
\sup_{(t,\theta) \in \Sigma_0} e^{A |(t,\theta)|^{1/s}} \, |V_\fy (\cK u) (t,\theta)| < \infty
\end{equation*}
for any $A>0$.
Thus $(t_0,\theta_0) \notin WF^s (\cK u)$. 
\end{proof}

%%%%%%%%%%%%%%%%%%%%%%%%%%%%%%%%%%%%%%%%%%%
\section{The $s$-Gelfand--Shilov wave front set of oscillatory integrals}\label{sec:oscint}
%%%%%%%%%%%%%%%%%%%%%%%%%%%%%%%%%%%%%%%%%%%

\subsection{Oscillatory integrals}\label{secoscint_one}

We need to describe a class of oscillatory integrals with quadratic phase functions introduced by H\"ormander \cite{Hormander2}. 
This is useful due to the fact that the Schwartz kernel of the Schr\"odinger propagator $e^{-t q^w(x,D)}$ is an integral of this form, as we will describe in Section \ref{sec:kernelschrod}. 
Our discussion on oscillatory integrals is brief. For a richer account we refer to \cite{Hormander2,Rodino2}. 

Let $p$ be a complex-valued quadratic form on $\rr {d + N}$,
\begin{equation}\label{pform}
p(x,\theta) = \la (x, \theta), P (x, \theta) \ra, \quad x \in \rr d, \quad \theta \in \rr N, 
\end{equation}
where $P \in \cc {(d+N) \times (d+N)}$ is the symmetric matrix
\begin{equation}\label{Pmatrix}
P=\left(
\begin{array}{ll}
P_{xx} & P_{x \theta} \\
P_{\theta x} & P_{\theta \theta} 
\end{array}
\right)
\end{equation}
where $P_{xx} \in \cc {d \times d}$, $P_{x \theta} \in \cc {d \times N}$ and $P_{\theta \theta} \in \cc {N \times N}$. 

Suppose $P$ satisfies the following two conditions. 

\begin{enumerate}

\item  $\im P \geqslant 0$; 

\item the row vectors of  the submatrix
\begin{equation*}
\left(
\begin{array}{ll}
P_{\theta x} & P_{\theta \theta} 
\end{array}
\right) \in \cc {N \times (d+N)}
\end{equation*}
are linearly independent over $\co$. 

\end{enumerate}

Under these circumstances the oscillatory integral
\begin{equation}\label{oscillint1}
u(x) = \int_{\rr N} e^{i p(x,\theta)} \, d \theta, \quad x \in \rr d, 
\end{equation}
can be given a unique meaning as an element in $\cS'(\rr d)$, by means of a regularization procedure \cite{Hormander2,Rodino2}. 
Due to the embedding $\cS'(\rr d) \subseteq \Sigma_s'(\rr d)$ ($s>1/2$), the oscillatory integral defines a unique element 
$u \in \Sigma_s'(\rr d)$. 

An oscillatory integral of the form \eqref{oscillint1} is, up to multiplication with an element in $\co \setminus 0$, bijectively associated with a Lagrangian subspace $\lambda \subseteq T^* \cc d$, that is positive in the sense of 
\begin{equation*}
i \sigma (\overline X, X ) \geqslant 0, \quad X \in \lambda. 
\end{equation*}

The positive Lagrangian associated with the oscillatory integral \eqref{oscillint1} is
\begin{equation}\label{lagrangian1}
\lambda = \{ (x, p_x'(x,\theta) ) \in T^* \cc d: \ p_\theta'(x,\theta) = 0, \ (x,\theta) \in \cc {d+N} \} \subseteq T^* \cc d. 
\end{equation}
The integer $N$ in the integral \eqref{oscillint1} is not uniquely determined by $u$. 
In fact, it may be possible to decrease $N$ and obtain the same $u$ times a nonzero complex constant. 
This procedure may be iterated until the term that is quadratic in $\theta$ disappears. 
The form $p$ is then (cf. \cite[Propositions 5.6 and 5.7]{Hormander2})
\begin{equation}\label{canonicp}
p(x,\theta) = \rho(x) + \la L \theta,x \ra
\end{equation}
where $\rho$ is a quadratic form, $\im \rho \geqslant 0$ and $L \in \rr {d \times N}$ is an injective matrix. 
The matrix $L$ is uniquely determined by the Lagrangian $\lambda$ modulo invertible right factors, 
and similarly the values of $\rho$ on $\Ker L^t$ are uniquely determined.  

The oscillatory integral is (cf. \cite[Proposition 5.7]{Hormander2})
\begin{equation}\label{oscillint2}
u(x) = (2 \pi)^N \delta_0 (L^t x) e^{i \rho(x)}, \quad x \in \rr d, 
\end{equation}
where $\delta_0 = \delta_0(\rr N)$. 

\subsection{The $s$-Gelfand--Shilov wave front set of an oscillatory integral}

\begin{thm}\label{WFsoscint}
Let $u \in \cS'(\rr d)$ be the oscillatory integral \eqref{oscillint1} with the associated positive Lagrangian $\lambda$ given by \eqref{lagrangian1}. 
Then for $s > 1/2$
\begin{equation}\label{WFincl}
WF^s(u) \subseteq (\lambda \cap T^* \rr d) \setminus 0. 
\end{equation}
\end{thm}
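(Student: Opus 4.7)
The plan is to reduce $u$ to the canonical form \eqref{oscillint2} and then exploit the symplectic invariance of both $WF^s$ (Corollary \ref{symplecticWFs}) and the Lagrangian $\lambda$ to bring everything into coordinates where $WF^s(u)$ can be read off directly from Proposition \ref{tensorWFs}. Throughout I will use that multiplication by a nonzero constant preserves both sides of \eqref{WFincl}, that orthogonal changes of variables and chirp multiplications lift to metaplectic operators by Proposition \ref{metaplecticcont}, and that the Lagrangian \eqref{lagrangian1} transforms covariantly under these operations (verified from the chain rule for $p'_x,p'_\theta$ and from the identity $(p+\rho_1)'_x = p'_x + B_1 x$ for chirp multiplication).

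For the reduction I would invoke \cite[Propositions 5.6, 5.7]{Hormander2} to assume $p(x,\theta)=\rho(x)+\la L\theta,x\ra$ as in \eqref{canonicp}, with $u$ equal up to a nonzero scalar to $\delta_0(L^t x)\,e^{i\rho(x)}$. Since $L\in\rr{d\times N}$ is injective, $L^t$ is surjective, so an orthogonal change of variables in $x$ lets me assume $L^t x=x''$ with $x=(x',x'')\in\rr{d-N}\times\rr N$. The distributional identity $\delta_0(x'')\,e^{i\wt\rho(x',x'')}=\delta_0(x'')\,e^{i\wt\rho(x',0)}$ collapses the smooth factor, producing a quadratic form $\rho_0(x'):=\wt\rho(x',0)=\rho_1(x')+i\rho_2(x')$ with $\rho_2\geqslant 0$. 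A further orthogonal change in $x'$ diagonalises $\rho_2$, so that $x'=(y_1,y_2)\in\rr{k_1}\times\rr{k_2}$ with $\rho_2(x')=\frac{1}{2}\la B_2 y_2,y_2\ra$ and $B_2>0$. Thus, up to a nonzero scalar,
\[
u = e^{i\rho_1(y_1,y_2)}\bigl(1\otimes g\otimes\delta_0\bigr),\qquad g(y_2)=e^{-\frac{1}{2}\la B_2 y_2,y_2\ra}\in\Sigma_s(\rr{k_2}).
\]

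Iterating Proposition \ref{tensorWFs} with $WF^s(1)=(\rr{k_1}\setminus 0)\times\{0\}$, $WF^s(g)=\emptyset$, and $WF^s(\delta_0)=\{0\}\times(\rr N\setminus 0)$ yields
$
WF^s(1\otimes g\otimes\delta_0)\subseteq\{(y_1,0,0;\,0,0,\xi'') : (y_1,\xi'')\neq 0\}.
$
Multiplication by $e^{i\rho_1(y_1,y_2)}$ coincides with the metaplectic operator $\mu(\chi_1)$ for the chirp matrix $\chi_1=\bigl(\begin{smallmatrix}I&0\\B_1&I\end{smallmatrix}\bigr)$, where $B_1\in\rr{d\times d}$ is the symmetric matrix of $\rho_1$ viewed as a form on $\rr d$ (extended by zeros in $x''$); by Corollary \ref{symplecticWFs} the wave front set is transported by $\chi_1$. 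Inspecting \eqref{lagrangian1} for the reduced phase $\wt p(x,\theta)=\wt\rho(x)+\la\theta,x''\ra$, the condition $\wt p'_\theta(x,\theta)=0$ forces $x''=0$; the second component of $\wt p'_x=\wt\rho'(x',0)+(0,\theta)$ runs freely over $\cc N$ as $\theta$ does, so the real points of $\wt\lambda$ are those for which $\rho_0'(x')$ is real, which forces $x'\in\Ker B_2$ and makes the first component equal to $B_1(y_1,0)=((B_1)_{11}y_1,(B_1)_{21}y_1)$. This set agrees exactly with $\chi_1\{(y_1,0,0;\,0,0,\xi''): (y_1,\xi'')\neq 0\}$, so the inclusion holds in the reduced coordinates, and transports back to \eqref{WFincl} via the inverse metaplectic operators. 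The main obstacle I anticipate is this bookkeeping step: tracking how $\lambda$ transforms under each of the several metaplectic reductions and matching it with the corresponding transformation of $WF^s$ provided by Corollary \ref{symplecticWFs}.
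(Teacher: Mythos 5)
Your argument is correct, but it takes a genuinely different route from the paper's at the decisive step. Both proofs start from the canonical form \eqref{canonicp}--\eqref{oscillint2}. The paper keeps $e^{i\rho(x)}$ as a function on all of $\rr d$: it writes $\rho=\rho_r+i\rho_i$, treats $e^{i\rho_r}$ as a metaplectic chirp (Corollary \ref{symplecticWFs}), handles the decaying factor $e^{-\rho_i}$ through the pseudodifferential microlocality result Proposition \ref{microlocalWFs}, computes $WF^s(\delta_0(L^t\cdot))$ via Proposition \ref{linsurj}, and --- crucially --- invokes \cite[Proposition~3.4]{Rodino2} to normalize $\re(\Ran\rho')\perp\Ran L$ and $\im(\Ran\rho')\perp\Ran L$ so that $\lambda\cap T^*\rr d$ can be read off. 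You instead first collapse $e^{i\wt\rho(x)}$ to $e^{i\wt\rho(x',0)}$ on the support of $\delta_0(x'')$ (a legitimate distributional identity that leaves $u$, and hence by the bijectivity of the phase--Lagrangian correspondence also $\lambda$, unchanged), peel the imaginary part off as a nondegenerate Gaussian tensor factor lying in $\Sigma_s$ and hence invisible to $WF^s$, and conclude with Proposition \ref{tensorWFs} and a single chirp. This bypasses Propositions \ref{microlocalWFs} and \ref{linsurj} and the external normalization lemma: in your adapted coordinates the covector's $x''$-component is free over $\cc N$, so reality of a point of $\lambda$ reduces automatically to the vanishing of $\im\rho_0'(x')$, which is exactly what the paper's normalization buys in its coordinates. (Note that Proposition \ref{linsurj} is itself proved in the paper via the tensor product bound, so you are essentially inlining that argument.) The paper's route is more modular, reusing lemmas of independent interest; yours is more self-contained at the cost of the coordinate bookkeeping you flag, which you carry out correctly. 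Two small points to tidy up: an orthogonal change of variables turns $\delta_0(L^t x)$ into $\delta_0(y'')$ only up to the nonzero constant arising from the invertible $N\times N$ factor in the factorization $L^tU=(0\ \ R)$, which is harmless (alternatively use a general invertible linear change, equally metaplectic); and the degenerate case $N=0$, treated separately in the paper, should be mentioned explicitly as the instance of your decomposition in which the $\delta_0$ factor is absent.
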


\begin{proof}
First we assume $N \geqslant 1$ in \eqref{oscillint1} and in the end we will take care of the case $N=0$. 

As discussed above we may assume that $p$ is of the form \eqref{canonicp}
where $\rho$ is a quadratic form, $\im \rho \geqslant 0$ and $L \in \rr {d \times N}$ is injective,  
and $u$ is given by \eqref{oscillint2}. 
The positive Lagrangian \eqref{lagrangian1} associated to $u$ is hence
\begin{equation}\label{lagrangian3}
\lambda = \{ (x,\rho'(x) + L \theta): \, (x,\theta) \in \cc {d+N}, \, L^t x = 0 \} \subseteq T^* \cc d. 
\end{equation}

By \cite[Proposition~3.4]{Rodino2} and the uniqueness part of \cite[Proposition 5.7]{Hormander2} we may assume 
\begin{equation*}
\re (\Ran \rho') \perp \Ran L, \quad \im (\Ran \rho') \perp \Ran L. 
\end{equation*}
If  $x \in \rr d$, $\theta \in \cc N$ and $0 = \im (\rho'(x) + L \theta) = \im (\rho')(x) + L \im \theta$, 
we may thus conclude $\im (\rho')(x)=0$ and $L \im \theta=0$, so the injectivity of $L$ forces $\theta \in \rr N$. 
Hence 
\begin{equation}\label{lambdareal}
\lambda \cap T^* \rr d = \{ (x,\re(\rho')(x) + L \theta): \, (x,\theta) \in \rr {d+N}, \, L^t x = 0 \}. 
\end{equation}

According to Proposition \ref{linsurj} and \eqref{WFsdirac}
\begin{align*}
WF^s( \delta_0 (L^t \cdot) ) 
& = \{ (x,L \xi) \in T^* \rr d \setminus 0: \, L^t x=0, \, \xi \in \rr N \setminus 0 \} \cup \Ker L^t \setminus 0 \times \{ 0 \} \\
& = (\Ker L^t \times L \rr N) \setminus 0 \subseteq T^* \rr d \setminus 0. 
\end{align*}
We write $\rho(x) = \rho_r(x) + i \rho_i(x)$ and 
\begin{equation}\label{rhodef}
\rho_r (x) = \la R_r x,x \ra, \quad \rho_i (x) = \la R_i x, x \ra
\end{equation}
with $R_r, R_i \in \rr {d \times d}$ symmetric and $R_i \geqslant 0$. 
The function $e^{ i \rho_r(x)}$, considered as a multiplication operator, is the metaplectic operator 
\begin{equation*}
e^{ i \rho_r(x)} = \mu (\chi)
\end{equation*}
where 
\begin{equation}\label{chidef}
\chi = 
\left(
\begin{array}{ll}
I & 0 \\
2 R_r & I
\end{array}
\right) \in \Sp(d,\ro).
\end{equation}

The function $g(x) = e^{- \rho_i(x) }$ satisfies the estimates \eqref{symbolestimate0} for all $h>0$. 
In fact, since $R_i \geqslant 0$ it suffices to verify the estimates for $g(Ux) = \Pi_{j=1}^n e^{-\mu_j x_j^2}$ where $U \in \rr {d \times d}$ is an orthogonal matrix
and $\mu_j>0$ for $1 \leqslant j \leqslant n \leqslant d$. 
The function $x \rightarrow g(Ux)$ clearly satisfies \eqref{symbolestimate0} for all $h>0$ since it is a tensor product of a Gaussian on $\rr n$, that belongs to $\Sigma_s(\rr n)$, and the function one on $\rr {d-n}$. 

If we consider $e^{- \rho_i(x) }$ as a function of $(x,\xi) \in T^* \rr d$, constant with respect to the $\xi$ variable, then 
the corresponding Weyl pseudodifferential operator is multiplication with $e^{- \rho_i(x) }$. 
Proposition \ref{microlocalWFs} gives
\begin{equation}\label{propagation0}
WF^s(e^{- \rho_i } u) \subseteq WF^s(u), \quad u \in \Sigma_s'(\rr d). 
\end{equation}
Piecing these arguments together, using 
Corollary \ref{symplecticWFs}
and \eqref{lambdareal}, gives
\begin{equation}\label{WFincl0}
\begin{aligned}
WF^s( u ) 
& = WF^s ( e^{- \rho_i } e^{i \rho_r} \delta_0 (L^t \cdot) ) \\
& \subseteq WF^s ( e^{i \rho_r} \delta_0 (L^t \cdot) ) \\
& = \chi WF^s ( \delta_0 (L^t \cdot) ) \\
& = \{ (x, 2 R_r x + L \theta):  \, (x,\theta)  \in \rr {d+N}, \, L^t x = 0 \} \setminus 0 \\
& = \{ (x, \rho_r'(x) + L \theta):  \, (x,\theta)  \in \rr {d+N}, \, L^t x = 0 \} \setminus 0 \\
& = (\lambda \cap T^* \rr d) \setminus 0. 
\end{aligned}
\end{equation}
This ends the proof when $N \geqslant 1$. 

Finally, if $N=0$ then the (degenerate) oscillatory integral \eqref{oscillint1} is
\begin{equation*}
u(x) = e^{ i \rho(x)}
\end{equation*} 
where $\rho = \rho_r + i \rho_i$ is given by \eqref{rhodef}
with $R_r, R_i \in \rr {d \times d}$ symmetric and $R_i \geqslant 0$. 
By \eqref{lagrangian1} the corresponding positive Lagrangian is 
\begin{equation}\label{lambda0}
\lambda = \{ (x, 2 (R_r + i R_i) x): \, x \in \cc d \} \subseteq T^* \cc d
\end{equation}  
which gives 
\begin{equation*}
\lambda \cap T^* \rr d = \{ (x, 2 R_r x): \, x \in \rr d \}. 
\end{equation*}  
Since $WF^s(1)= (\rr d \setminus 0) \times \{0\}$ (cf. \eqref{WFsone}) we obtain, 
again using \eqref{propagation0} and recycling the argument above, 
\begin{equation*}
\begin{aligned}
WF^s( u ) 
& = WF^s ( e^{- \rho_i } e^{i \rho_r} ) \\
& \subseteq WF^s ( e^{i \rho_r}  ) \\
& = \chi WF^s ( 1 ) \\
& = \{ (x, 2 R_r x):  \, x  \in \rr d \setminus 0 \}  \\
& = (\lambda \cap T^* \rr d) \setminus 0. 
\end{aligned}
\end{equation*}
\end{proof} 

\begin{rem}
From $WF(u) \subseteq WF^s(u)$ for $u \in \cS'(\rr d)$ and $s>1/2$
it follows that Theorem \ref{WFsoscint} is a sharpening of \cite[Theorem~3.6]{Rodino2}.
\end{rem}

%%%%%%%%%%%%%%%%%%%%%%%%%%%%%%%%%%%%%%%%%%%
\section{The Schwartz kernel of the Schr\"odinger propagator}\label{sec:kernelschrod}
%%%%%%%%%%%%%%%%%%%%%%%%%%%%%%%%%%%%%%%%%%% 

Let $q$ be a quadratic form on $T^*\rr d$ defined by a symmetric matrix $Q \in \cc {2 d \times 2 d}$ with Hamilton map $F=\J Q$ and $\re Q \geqslant 0$.
According to \cite[Theorem 5.12]{Hormander2} the Schr\"odinger propagator is 
\begin{equation*}
e^{-t q^w(x,D)} = \cK_{e^{-2 i t F}}
\end{equation*}
where $\cK_{e^{-2 i t F}}: \cS(\rr d) \rightarrow \cS'(\rr d) $ is the linear continuous operator with Schwartz kernel 
\begin{equation}\label{schwartzkernel1}
K_T (x,y) = (2 \pi)^{-(d+N)/2} \sqrt{\det \left( 
\begin{array}{ll}
p_{\theta \theta}''/i & p_{\theta y}'' \\
p_{x \theta}'' & i p_{x y}'' 
\end{array}
\right) } \int_{\rr N} e^{i p(x,y,\theta)} d\theta \in \cS'(\rr {2d})
\end{equation}
with $T = e^{-2 i t F}$.
This kernel is an oscillatory with respect to a quadratic form $p$ on $\rr {2 d + N}$ as discussed in Section \ref{secoscint_one}. 
The positive Lagrangian associated to the Schwartz kernel $K_{e^{-2 i t F}}$ is 
\begin{equation}\label{twistgraph}
\lambda = \{ (x, y, \xi, -\eta) \in T^* \cc {2d}: \, (x,\xi) =  e^{-2 i t F} (y,\eta) \}.
\end{equation}
By \cite[Lemma~4.2]{Rodino2} the Lagrangian $\lambda = \lambda_{e^{-2 i t F}}$ given by \eqref{twistgraph} is a positive twisted graph Lagrangian defined by the matrix $e^{-2 i t F} \in \Sp(d,\co)$. 
When a twisted graph Lagrangian defined by a matrix $T \in \Sp(d,\co)$ is positive, also the matrix is called positive \cite{Hormander2}. 
This means
\begin{equation*}
i \left( \sigma(\overline{TX}, TX) - \sigma(\overline{X},X) \right) \geqslant 0, \quad X \in T^* \cc d. 
\end{equation*}

Since $K_{e^{-2 i t F}} \in \cS'(\rr {2d})$ the propagator is a continuous operator $e^{-t q^w(x,D)}: \cS (\rr d) \rightarrow \cS' (\rr d)$. 
We have in fact continuity $e^{-t q^w(x,D)}: \cS (\rr d) \rightarrow \cS (\rr d)$. 
This follows from \cite[Proposition~5.8 and Theorem~5.12]{Hormander2} which says that 
$\cK_T: \cS(\rr d) \rightarrow \cS(\rr d)$ is continuous for any positive matrix $T \in \Sp(d,\co)$.
The next result shows that $\cK_T: \Sigma_s (\rr d) \rightarrow \Sigma_s (\rr d)$ is continuous and $\cK_T$ extends uniquely to a continuous operator  
$\cK_T: \Sigma_s' (\rr d) \rightarrow \Sigma_s' (\rr d)$. 

\begin{prop}\label{KTcontGF}
Suppose $T \in \Sp (d,\co)$ is positive and let $\cK_T: \cS(\rr d) \rightarrow \cS'(\rr d)$ be the 
continuous linear operator having Schwartz kernel $K_T \in \cS'(\rr {2d})$ defined by \eqref{schwartzkernel1}. 
For $s > 1/2$ the operator $\cK_T$ is continuous on $\Sigma_s(\rr d)$ 
and $\cK_T$ extends uniquely to a continuous operator on $\Sigma_s'(\rr d)$. 
\end{prop}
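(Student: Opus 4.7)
The plan is to reuse the canonical-form reduction of $K_T$ carried out in the proof of Theorem~\ref{WFsoscint}, and to show that the resulting integral operator decomposes into a finite composition of maps whose continuity on $\Sigma_s(\rr d)$ is already established; the extension to $\Sigma_s'(\rr d)$ is then produced by duality.

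First, as in Section~\ref{sec:oscint}, I would invoke \cite[Propositions~5.6 and 5.7]{Hormander2} to write, up to a nonzero multiplicative constant,
\begin{equation*}
K_T(x,y) = c\, \delta_0\bigl(L^t (x,y)\bigr)\, e^{i\rho_r(x,y)}\, e^{-\rho_i(x,y)},\qquad (x,y)\in\rr{2d},
\end{equation*}
where $L \in \rr{2d\times N}$ is injective and $\rho_r, \rho_i$ are real quadratic forms on $\rr{2d}$ with $\rho_i\geqslant 0$, and where (as in the proof of Theorem~\ref{WFsoscint}) the orthogonality relations $\re(\Ran\rho')\perp \Ran L$ and $\im(\Ran\rho')\perp \Ran L$ may be assumed.

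Next I would identify the action of $\cK_T$ on a test function as a composition of elementary operations on $\Sigma_s$. The unimodular chirp $e^{i\rho_r(x,y)}$, viewed as multiplication on functions of $(x,y)\in\rr{2d}$, is a metaplectic operator of the form \eqref{chichirp} and is therefore continuous on $\Sigma_s(\rr{2d})$ by Proposition~\ref{metaplecticcont}. The Gaussian damping $e^{-\rho_i(x,y)}$, viewed as the Weyl symbol (constant in the dual variables) of a pseudodifferential operator, satisfies the estimates \eqref{symbolestimate0} for every $h>0$ --- a verification already carried out in the proof of Theorem~\ref{WFsoscint} --- so it acts continuously on $\Sigma_s$ by \cite[Theorem~3.4]{Cappiello2}. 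Since $L$ is injective, a linear change of coordinates on $\rr{2d}$ adapted to $L$, itself continuous on $\Sigma_s$ by Section~\ref{subsec:GelfandShilov} (and by Proposition~\ref{metaplecticcont}), turns $\delta_0(L^t\cdot)$ into a Dirac mass in $N$ of the coordinates. Integrating $K_T(x,y)f(y)$ in $y$ then amounts to a partial inverse Fourier transform of a product of a Gelfand--Shilov function with the lifted $f$, followed by a projection onto the $x$-variables; partial Fourier transforms and projections preserve $\Sigma_s$ by Section~\ref{subsec:GelfandShilov}. Composing these steps yields $\cK_T:\Sigma_s(\rr d)\to\Sigma_s(\rr d)$ continuous.

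For the extension to $\Sigma_s'(\rr d)$, I would apply the same argument to the $L^2$-adjoint $\cK_T^*$, whose Schwartz kernel $\overline{K_T(y,x)}$ is an oscillatory integral of the form \eqref{schwartzkernel1} attached to another positive matrix in $\Sp(d,\co)$. The previous argument then yields continuity $\cK_T^*:\Sigma_s(\rr d)\to\Sigma_s(\rr d)$, and by duality $\cK_T$ extends to a continuous operator $\Sigma_s'(\rr d)\to\Sigma_s'(\rr d)$; uniqueness of the extension is immediate from the density of $\Sigma_s(\rr d)$ in $\Sigma_s'(\rr d)$. The main obstacle will be the careful handling of the singular factor $\delta_0(L^t\cdot)$ when $L$ is not surjective, so that only $N\leqslant 2d$ of the integration variables are pinned and the remaining partial integration must be controlled uniformly in the $\Sigma_s$-seminorms; once coordinates compatible with $L$ and with the quadratic form $\rho$ are chosen, the operator factors exactly into the elementary pieces listed above, each of which is already known to be $\Sigma_s$-continuous.
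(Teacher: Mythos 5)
Your route is genuinely different from the paper's, and it contains a gap. The paper does not return to the canonical form of the kernel at all: it takes H\"ormander's continuity of $\cK_T$ on $\cS(\rr d)$ as a black box to control $\| \cK_T f \|_A'$ by $\| f \|_B' + \| \wh f \|_B'$, controls the Fourier-side seminorm $\| \cF(\cK_T f) \|_A'$ through the identity $\cF = c\, \cK_{\J}$ together with the composition law $\cK_{\J}\cK_T\cK_{\J} = \pm \cK_{\J T \J}$ (with $\J T\J$ again positive), and concludes by the seminorm equivalence of Proposition \ref{seminormequivalence}; the extension to $\Sigma_s'(\rr d)$ uses that the formal adjoint is $\cK_{\overline{T}^{-1}}$ with $\overline{T}^{-1}$ positive. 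Your duality step at the end is consistent with this and is fine.

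The gap is in the factorization itself. To run $\cK_T f$ through multiplication by $e^{i\rho_r}$, multiplication by $e^{-\rho_i}$, the pinned delta and a partial integration on $\rr {2d}$, you must first feed these maps the lifted function $(x,y)\mapsto f(y)$, and $1\otimes f$ does not belong to $\Sigma_s(\rr {2d})$: it has no decay in $x$. Consequently none of the continuity statements you invoke (Proposition \ref{metaplecticcont} for the chirp, \cite[Theorem~3.4]{Cappiello2} for the Gaussian damping, the $\Sigma_s$-invariance of partial Fourier transforms and of integration over a block of variables) applies to the object actually being acted on, and the maps do not compose. The decay of $\cK_T f(x)$ in $x$ is not supplied by any single factor of the kernel; it is produced by the positivity of $T$ through the interplay of $e^{-\rho_i}$, the constraint $\delta_0(L^t\cdot)$ and the oscillation after partial integration. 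The free Schr\"odinger propagator makes this concrete: there $N=0$, $\rho_i\equiv 0$ and $K_T(x,y)=c\,e^{i|x-y|^2/4t}$ is a pure chirp, so the integrand $e^{i\rho_r(x,y)}f(y)$ decays in no $x$-direction whatsoever, yet $\cK_T f$ does decay --- by stationary phase, not by continuity of a partial integration on $\Sigma_s(\rr{2d})$. Repairing your argument would amount to redoing the direct oscillatory-integral estimates of \cite[Proposition~5.8]{Hormander2} with the exponential weights $e^{A|\cdot|^{1/s}}$, which is a substantially longer computation than the formal composition you describe; the paper's detour through the already-established Schwartz continuity and the algebra of the maps $T\mapsto\cK_T$ is precisely how it avoids this.
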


\begin{proof}
Due to the above mentioned continuity of $\cK_T$ on $\cS(\rr d)$, we have for some integer $L \geqslant 0$
\begin{equation*}
\sup_{x \in \rr d} |\cK_T f(x)|
\lesssim \sum_{|\alpha+\beta| \leqslant L} \sup_{x \in \rr d} |x^\alpha D^\beta f(x)|.
\end{equation*}

Using the seminorms \eqref{gfseminorm} it can be seen readily that the operators $f \rightarrow D^\beta f$ and $f \rightarrow x^\alpha f$ are continuous operators on $\Sigma_s(\rr d)$. 
By Proposition \ref{seminormequivalence} we therefore have for any $A>0$
\begin{equation*}
e^{A |x|^{1/s}} | x^\alpha D^\beta f (x)| 
\leqslant \| x^\alpha D^\beta f \|_A'
\lesssim \| f \|_B' + \| \wh f \|_B', \quad |\alpha+\beta| \leqslant L, \quad x \in \rr d, 
\end{equation*}
for some $B>0$, where we use the seminorm \eqref{seminorm1}.  
This gives 
\begin{equation}\label{firstestimate}
\| \cK_T f \|_A'
= \sup_{x \in \rr d} e^{A |x|^{1/s}} |\cK_T f(x)|
\lesssim \| f \|_B' + \| \wh f \|_B'
\end{equation}
which gives a desired continuity estimate for one of the two families of seminorms
$\{\| f \|_A', \ \| \wh f \|_B', \ A,B>0 \}$. 

To prove that $\cK_T$ is continuous on $\Sigma_s(\rr d)$ it remains to estimate $\| \cF (\cK_T f) \|_A'$ for any $A>0$. 
The Fourier transform has Schwartz kernel $K(x,y)=e^{-i \la x, y \ra}$ which is a degenerate oscillatory integral with $N=0$, cf. \eqref{oscillint1}.
The corresponding Lagrangian is by \eqref{lagrangian1}
\begin{align*}
\lambda & = \{ (x,y, - y, -x) \in T^* \cc {2d}: (x,y) \in T^* \cc d \} \\
& = \{ (x,y, \xi, -\eta) \in T^* \cc {2d}: (x,\xi) = \J(y,\eta) \}.
\end{align*}
Due to the uniqueness, modulo multiplication with a nonzero complex number, of the correspondence between oscillatory integrals and Lagrangians, it follows that $\cF = (2 \pi)^{d/2} \mu(\J) = c \cK_{\J}$ for some $c \in \co \setminus 0$. 

We may hence write for some $c \in \co \setminus 0$, using the semigroup property of $T \rightarrow \cK_T$ modulo sign, when $T \in \Sp(d,\co)$ is positive (cf. \cite[Proposition~5.9]{Hormander2})
\begin{align*}
\cF (\cK_T f) 
& = \cF \cK_T \cF \cF^{-1} f 
= c^2 \cK_\J \cK_T \cK_\J \cF^{-1} f \\
& = \pm c^2 \cK_{\J T \J} \cF^{-1} f. 
\end{align*}
Since $\J T \J \in \Sp(d,\co)$ is positive, and since the estimate \eqref{firstestimate} holds for any positive $T \in \Sp(d,\co)$ for some $B>0$, we obtain for any $A>0$
\begin{equation}\label{secondestimate}
\| \cF(\cK_T f) \|_A'
\lesssim \| f \|_B' + \| \wh f \|_B'
\end{equation}
for some $B>0$.  
Combining \eqref{firstestimate} and \eqref{secondestimate} and referring to Proposition \ref{seminormequivalence}, we have proved that $\cK_T$ is continuous on $\Sigma_s(\rr d)$.

Finally we show that $\cK_T$ extends uniquely to a continuous operator on $\Sigma_s'(\rr d)$. 
The formal adjoint of $\cK_T$ is $\cK_{\overline{T}^{-1}}$ which is indexed by the inverse conjugate matrix $\overline{T}^{-1} \in \Sp(d,\co)$ \cite{Hormander2}. 
The positivity of $\overline{T}^{-1}$ is an immediate consequence of the assumed positivity of $T$. 
Thus $\cK_T$ may be defined on $\Sigma_s' (\rr d)$ by
\begin{equation*}
( \cK_T u, \fy ) = ( u, \cK_{\overline{T}^{-1}} \fy ), \quad u \in \Sigma_s' (\rr d), \quad \fy \in \Sigma_s (\rr d), \quad s>1/2, 
\end{equation*}
which gives a uniquely defined extension of $\cK_T$ as a continuous operator on $\Sigma_s' (\rr d)$. 
\end{proof}
 
\begin{cor}\label{propagatorcontGF}
The Schr\"odinger propagator $e^{-t q^w(x,D)}$ has Schwartz kernel $K_{e^{-2 i t F}} \in \cS'(\rr {2d})$. 
For $t \geqslant 0$ and $s>1/2$ it is a continuous operator on $\Sigma_s (\rr d)$,
and it extends uniquely to a continuous operator on $\Sigma_s' (\rr d)$. 
\end{cor}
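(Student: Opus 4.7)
The plan is to assemble the corollary directly from the machinery already put in place in this section, with Proposition \ref{KTcontGF} doing essentially all of the work. I would proceed as follows.

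First I would recall from the opening of Section \ref{sec:kernelschrod} that, by \cite[Theorem~5.12]{Hormander2}, the propagator is given by $e^{-t q^w(x,D)} = \cK_{e^{-2 i t F}}$, with Schwartz kernel $K_{e^{-2itF}} \in \cS'(\rr{2d})$ represented as the oscillatory integral \eqref{schwartzkernel1} whose associated Lagrangian is the twisted graph Lagrangian \eqref{twistgraph}. The first sentence of the corollary is then immediate from this representation.

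Next I would verify the positivity hypothesis of Proposition \ref{KTcontGF} for the matrix $T = e^{-2itF}$, $t \geqslant 0$. This is exactly what was recorded immediately after \eqref{twistgraph}: by \cite[Lemma~4.2]{Rodino2}, the assumption $\re Q \geqslant 0$ together with $t \geqslant 0$ forces $\lambda_{e^{-2itF}}$ to be a positive twisted graph Lagrangian, which by the convention quoted in the text means that $e^{-2itF} \in \Sp(d,\co)$ is a positive symplectic matrix. So the hypothesis of Proposition \ref{KTcontGF} is satisfied for every $t \geqslant 0$.

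With positivity in hand, Proposition \ref{KTcontGF} applied to $T = e^{-2itF}$ gives both remaining assertions at once: $\cK_{e^{-2itF}} = e^{-tq^w(x,D)}$ is continuous on $\Sigma_s(\rr d)$ for every $s > 1/2$, and it extends uniquely to a continuous operator on $\Sigma_s'(\rr d)$ by the duality prescription
\begin{equation*}
( e^{-t q^w(x,D)} u, \fy ) = ( u, \cK_{\overline{e^{-2itF}}^{\,-1}} \fy ), \quad u \in \Sigma_s'(\rr d), \quad \fy \in \Sigma_s(\rr d),
\end{equation*}
as in the last paragraph of the proof of Proposition \ref{KTcontGF}. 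There is no real obstacle here; the only point that requires vigilance is to make sure that the positivity of $e^{-2itF}$ is invoked for all $t \geqslant 0$ (not just $t = 0$), which is the content of \cite[Lemma~4.2]{Rodino2} under the standing hypothesis $\re Q \geqslant 0$. Uniqueness of the extension follows as usual from the density of $\Sigma_s(\rr d)$ in $\Sigma_s'(\rr d)$.
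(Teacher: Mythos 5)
Your proposal is correct and is exactly the route the paper intends: the corollary is stated without a separate proof precisely because it follows immediately from the kernel representation \eqref{schwartzkernel1}, the positivity of $e^{-2itF}$ for $t \geqslant 0$ recorded after \eqref{twistgraph}, and Proposition \ref{KTcontGF} applied to $T = e^{-2itF}$. Your added care in checking positivity for all $t \geqslant 0$ is exactly the right point of vigilance.
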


%%%%%%%%%%%%%%%%%%%%%%%%%%%%%%%%%%%%%%%%%%%%%%%%%%%%%%%
\section{Propagation of the $s$-Gelfand--Shilov wave front set for Schr\"odinger equations}\label{sec:propsing}
%%%%%%%%%%%%%%%%%%%%%%%%%%%%%%%%%%%%%%%%%%%%%%%%%%%%%%%

Since the Schwartz kernel of the Schr\"odinger propagator $e^{-t q^w(x,D)}$ is an oscillatory integral 
corresponding to the positive Lagrangian \eqref{twistgraph}, 
an appeal to Theorem \ref{WFsoscint} gives the following result. 
For $s>1/2$ the $s$-Gelfand--Shilov wave front set of the Schwartz kernel $K_{e^{-2 i t F}}$ of the propagator $e^{-t q^w(x,D)}$ for $t \geqslant 0$ obeys the  inclusion
\begin{equation}\label{WFkernel1}
\begin{aligned}
& WF^s( K_{e^{-2 i t F}} ) \\
& \subseteq \{ (x, y, \xi, -\eta) \in T^* \rr {2d} \setminus 0 : \, (x,\xi) =  e^{-2 i t F} (y,\eta), \, \im e^{-2 i t F} (y,\eta) = 0 \}. 
\end{aligned}
\end{equation}

Combining this with Corollary \ref{propagatorcontGF} and Theorem \ref{WFphaseincl} now gives a result on propagation of singularities. 
The assumptions of the latter theorem are satisfied for $\cK = \cK_{e^{-2 i t F}} = e^{-t q^w(x,D)}$, 
since $WF_1^s ( K_{e^{-2 i t F}} ) = WF_2^s ( K_{e^{-2 i t F}} )= \emptyset$ follows from \eqref{WFkernel1} and the invertibility of $e^{-2 i t F} \in \cc {2d \times 2d}$, cf. \eqref{WFproj}.

\begin{cor}\label{propagationsing1}
Suppose $q$ is a quadratic form on $T^*\rr d$ defined by a symmetric matrix $Q \in \cc {2d \times 2d}$, $\re Q \geqslant 0$, $F=\J Q$, 
and $s > 1/2$.
Then for $u \in \Sigma_s'(\rr d)$
\begin{align*}
WF^s (e^{-t q^w(x,D)}u) 
& \subseteq  e^{-2 i t F} \left( WF^s (u) \cap \Ker (\im e^{-2 i t F} ) \right), \quad t \geqslant 0. 
\end{align*}
\end{cor}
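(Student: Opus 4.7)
The plan is to assemble the corollary from three ingredients already established in the paper: the identification $e^{-tq^w(x,D)} = \cK_{e^{-2itF}}$ together with the explicit oscillatory kernel $K_{e^{-2itF}} \in \cS'(\rr{2d})$ from Section \ref{sec:kernelschrod}; the inclusion \eqref{WFkernel1} describing $WF^s(K_{e^{-2itF}})$, which was just obtained by feeding the positive Lagrangian \eqref{twistgraph} into Theorem \ref{WFsoscint}; and Theorem \ref{WFphaseincl}, which converts a bound on the wave front set of a Schwartz kernel into a propagation statement for the associated operator. Corollary \ref{propagatorcontGF} supplies the analytic side condition that $\cK_{e^{-2itF}}$ is continuous on $\Sigma_s(\rr d)$ and extends uniquely to a continuous operator on $\Sigma_s'(\rr d)$.

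First I would check the vanishing hypothesis $WF_1^s(K_{e^{-2itF}}) = WF_2^s(K_{e^{-2itF}}) = \emptyset$ required by Theorem \ref{WFphaseincl}. If $(x,\xi) \in WF_1^s(K_{e^{-2itF}})$ then by \eqref{WFproj} one has $(x,0,\xi,0) \in WF^s(K_{e^{-2itF}})$ with $(x,\xi) \neq 0$; but \eqref{WFkernel1} forces $(x,\xi) = e^{-2itF}(0,0) = 0$, a contradiction. The $WF_2^s$ case is identical, this time invoking the invertibility of the symplectic matrix $e^{-2itF} \in \Sp(d,\co)$ to deduce $(y,\eta)=0$ from $e^{-2itF}(y,\eta)=0$.

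With the hypotheses in place, Theorem \ref{WFphaseincl} gives
\begin{equation*}
WF^s(e^{-tq^w(x,D)} u) \subseteq WF^s(K_{e^{-2itF}})' \circ WF^s(u).
\end{equation*}
It remains only to evaluate the right-hand relational composition via \eqref{WFkernel1}. A point $(x,\xi) \in T^*\rr d \setminus 0$ belongs to this set precisely when there exists $(y,\eta) \in WF^s(u)$ with $(x,y,\xi,-\eta) \in WF^s(K_{e^{-2itF}})$, i.e.\ with $(x,\xi) = e^{-2itF}(y,\eta)$ and $\im e^{-2itF}(y,\eta) = 0$. The latter condition is exactly $(y,\eta) \in \Ker(\im e^{-2itF})$, so the composition collapses to $e^{-2itF}\bigl( WF^s(u) \cap \Ker(\im e^{-2itF}) \bigr)$, which is the asserted bound.

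The argument is essentially bookkeeping once Theorems \ref{WFphaseincl} and \ref{WFsoscint} and Corollary \ref{propagatorcontGF} are available, so I do not expect a real obstacle. The only points that deserve a sentence of care are the use of invertibility of $e^{-2itF}$ to empty the coordinate-axis components $WF_j^s$, and the observation that if $(y,\eta) \neq 0$ then $e^{-2itF}(y,\eta) \neq 0$, so the image of the composition lies in $T^*\rr d \setminus 0$ as required by the definition of $WF^s$.
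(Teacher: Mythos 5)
Your proposal is correct and follows exactly the paper's own route: combine the kernel inclusion \eqref{WFkernel1} (from Theorem \ref{WFsoscint} applied to the twisted graph Lagrangian), verify $WF_1^s(K_{e^{-2itF}}) = WF_2^s(K_{e^{-2itF}}) = \emptyset$ via the invertibility of $e^{-2itF}$, and then apply Theorem \ref{WFphaseincl} together with the continuity supplied by Corollary \ref{propagatorcontGF}, finishing by unwinding the relation composition. The bookkeeping, including the observation that only the $WF_2^s$ case genuinely needs invertibility, is accurate.
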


As in the proof of \cite[Theorem 5.2]{Rodino2}, the latter inclusion can be sharpened using the semigroup modulo sign property of the propagator \cite{Hormander2}
\begin{equation*}
e^{-(t_1+t_2)q^w(x,D)}=\pm e^{-t_1q^w(x,D)}e^{-t_2q^w(x,D)}, \quad t_1,t_2 \geqslant 0. 
\end{equation*} 

In fact, using this property and some elementary arguments one obtains the inclusions
\begin{align*}
WF^s (e^{-t q^w(x,D)}u) 
& \subseteq \left( e^{2 t \im F} \left( WF^s (u) \cap S \right) \right) \cap S \\
& \subseteq e^{-2 i t F} \left( WF^s (u) \cap \Ker (\im e^{-2 i t F} ) \right)  
\end{align*}
where the singular space 
\begin{equation*}
S=\Big(\bigcap_{j=0}^{2d-1} \Ker\big[\re F(\im F)^j \big]\Big) \cap T^*\rr d \subseteq T^*\rr d 
\end{equation*}
of the quadratic form $q$ plays a crucial role. 

\begin{cor}\label{propagationsing2}
Suppose $q$ is a quadratic form on $T^*\rr d$ defined by a symmetric matrix $Q \in \cc {2d \times 2d}$, $\re Q \geqslant 0$, $F=\J Q$,  
and $s > 1/2$.
Then for $u \in \Sigma_s'(\rr d)$
\begin{align*}
WF^s (e^{-t q^w(x,D)}u) 
& \subseteq  \left( e^{2 t \im F} \left( WF^s (u) \cap S \right) \right) \cap S, \quad t > 0. 
\end{align*}
\end{cor}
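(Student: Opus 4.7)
The plan is to iterate Corollary \ref{propagationsing1} using the modulo-sign semigroup property of the propagator and to pass to a limit. Two structural facts about the singular space $S$ underpin everything and follow directly from its definition. First, $S$ is invariant under $\im F$: the conditions $\re F (\im F)^j X = 0$ for $0 \leqslant j \leqslant 2d-1$ transfer to $\im F X$ trivially for $0 \leqslant j \leqslant 2d-2$, and for $j = 2d-1$ by expressing $(\im F)^{2d}$ through the Cayley--Hamilton theorem applied to the real matrix $\im F$. Consequently $e^{2t \im F}$ maps $S$ bijectively onto itself. Second, for $X \in S$ the relation $\re F X = 0$ forces $FX = i(\im F)X$; iterating this gives $F^k X = (i\im F)^k X$, whence $e^{-2itF} X = e^{2t \im F} X$ for every $X \in S$.

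Set $\tau = t/n$ and $K_\tau = \Ker(\im e^{-2i\tau F})$. Applying Corollary \ref{propagationsing1} repeatedly to the factorization $e^{-tq^w(x,D)} = \pm (e^{-\tau q^w(x,D)})^n$ and using the bijectivity of $e^{-2i\tau F}$ on $\cc {2d}$ to commute pull-backs past intersections (since $e^{-2i\tau F}(A) \cap K_\tau = e^{-2i\tau F}(A \cap e^{2i\tau F} K_\tau)$), $n$ iterations yield
\begin{equation*}
WF^s (e^{-t q^w(x,D)} u) \subseteq e^{-2itF}\Big(WF^s(u) \cap V_n\Big),\qquad V_n := \bigcap_{j=0}^{n-1} e^{2ij\tau F} K_\tau.
\end{equation*}
Unfolding the definitions, $X \in V_n$ is equivalent to $X \in \rr {2d}$ together with $e^{-2ik\tau F} X \in \rr {2d}$ for $k = 0, 1, \dots, n$.

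Next, intersect these inclusions over all positive integers $n$. Because $e^{-2itF}$ is a bijection on $\cc {2d}$,
\begin{equation*}
\bigcap_n e^{-2itF}\big(WF^s(u) \cap V_n\big) = e^{-2itF}\Big(WF^s(u) \cap \bigcap_n V_n\Big).
\end{equation*}
The set $\bigcap_n V_n$ enforces $e^{-2i\sigma F} X \in \rr {2d}$ for every $\sigma$ in the dense subset $\{(k/n)t : n \geqslant 1,\ 0 \leqslant k \leqslant n\}$ of $[0,t]$; continuity and closedness of $\rr {2d}$ in $\cc {2d}$ extend this to every $\sigma \in [0,t]$, and real analyticity of $\sigma \mapsto e^{-2i\sigma F} X$ then compels every Taylor coefficient at $\sigma = 0$ to be real. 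An induction on the order of the coefficient identifies these conditions with $\re F (\im F)^j X = 0$ for all $j \geqslant 0$, which by Cayley--Hamilton for $\im F$ truncates to $j = 0, \dots, 2d-1$, i.e. $X \in S$. Invoking $e^{-2itF}|_S = e^{2t\im F}|_S$ and $e^{2t\im F}(S) = S$ from the opening paragraph then gives
\begin{equation*}
WF^s (e^{-t q^w(x,D)} u) \subseteq e^{2t\im F}\big(WF^s(u) \cap S\big) = \big(e^{2t\im F}(WF^s(u) \cap S)\big) \cap S.
\end{equation*}

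The main technical obstacle is the passage from the finite intersections $V_n$ to the singular space $S$: one must combine density of rational points in $[0,t]$, closedness of $\rr {2d} \subset \cc {2d}$, and real analyticity of the orbit $\sigma \mapsto e^{-2i\sigma F} X$ to convert a countable family of finitely many reality constraints into the full infinite hierarchy of conditions defining $S$, and then use Cayley--Hamilton to render that hierarchy finite again.
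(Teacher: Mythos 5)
Your argument is correct and is essentially the approach the paper has in mind: it iterates Corollary \ref{propagationsing1} via the modulo-sign semigroup property and identifies $\bigcap_{0\leqslant\sigma\leqslant t}\Ker(\im e^{-2i\sigma F})\cap\rr{2d}$ with the singular space $S$ by a density/analyticity argument, which is precisely the content of the "elementary arguments" the paper delegates to the proof of Theorem~5.2 in \cite{Rodino2}. All the key steps — the Cayley--Hamilton truncation, the invariance $e^{2t\im F}(S)=S$, the identity $e^{-2itF}|_S=e^{2t\im F}|_S$, and the commutation of the pull-back past the intersection — check out.
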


%%%%%%%%%%%%%%%%%%%%%%%%%%%%%


\begin{thebibliography}{2000}
%%%%%%%%%%%%%%%%%%%%%%%%%%%%%

\bibitem{Cappiello1}
M.~Cappiello and R.~Schulz, \textit{Microlocal analysis of quasianalytic Gelfand--Shilov type ultradistributions}, 
Complex Variables and Elliptic Equations \textbf{61} (4), 538--561, 2016. 

\bibitem{Cappiello2}
M.~Cappiello and J.~Toft, \textit{Pseudo-differential operators in a Gelfand--Shilov setting}, arXiv:1505.04096 [math.FA], 2015. 

\bibitem{Chung1}
J.~Chung, S.~Y.~Chung and D.~Kim, \textit{Characterization of the Gelfand--Shilov spaces via Fourier transforms}, Proc. AMS \textbf{124} (7), 2101--2108, 1996.

\bibitem{Cordero5}
E.~Cordero, F.~Nicola and L.~Rodino,
\textit{Propagation of the Gabor wave front set for Schr\"odinger equations with non-smooth potentials}, 
Rev. Math. Phys. \textbf{27} (1) (33 pp.), 2015.

\bibitem{Cordero6}
\bysame, 
\textit{Wave packet analysis of Schr\"odinger equations in analytic function spaces}, 
Adv. Math. \textbf{278}, 182--209, 2015. 

\bibitem{Cordero7}
\bysame, 
\textit{Exponentially sparse representations of Fourier integral operators}, Rev. Mat. Iberoamer. \textbf{31} (2), 461--476, 2015.

\bibitem{Folland1}
G.~B.~Folland, \textit{Harmonic Analysis in Phase Space}, Princeton University Press, 1989.

\bibitem{Gelfand1}
I.~M.~Gel'fand and G.~E.~Shilov, \textit{Generalized Functions}, Vol. 2, Academic Press, New York, London, 1967.

\bibitem{Grochenig1}
K.~Gr\" ochenig, \textit{Foundations of Time-Frequency Analysis}, Birkh\" auser, Boston, 2001.

\bibitem{Grochenig2}
\bysame, \textit{Time-frequency analysis of Sj\"ostrand's class}, Rev. Mat. Iberoamer. \textbf{22} (2), 703--724, 2006.

\bibitem{Grochenig3}
K.~Gr\"ochenig and G.~Zimmermann, \textit{Spaces of test functions via the STFT},  J. Funct.
Spaces Appl. \textbf{2}, 25--53, 2004.

\bibitem{Hitrik1}
M.~Hitrik and K.~Pravda--Starov,
\textit{Spectra and semigroup smoothing for non-elliptic quadratic operators}, Math. Ann. \textbf{344}, 801--846, 2009.

\bibitem{Hitrik2} 
\bysame, 
\textit{Semiclassical hypoelliptic estimates for non-selfadjoint operators with double characteristics}, Comm. Partial Differential Equations \textbf{35} (6), 988--1028, 2010.

\bibitem{Hitrik3} 
\bysame, 
\textit{Eigenvalues and subelliptic estimates for non-selfadjoint semiclassical operators with double characteristics}, Ann. Inst. Fourier \textbf{63} (3), 985--1032, 2013.

\bibitem{Hormander0}
L.~H\"ormander, \textit{The Analysis of Linear Partial Differential Operators}, vol I, III, IV
Springer-Verlag, Berlin Heidelberg NewYork Tokyo, 1983.

\bibitem{Hormander1}
\bysame, \textit{Quadratic hyperbolic operators}, Microlocal Analysis and Applications, Lecture Notes in Math. \textbf{1495}, Eds. L. Cattabriga, L. Rodino, pp. 118--160, Springer, 1991.

\bibitem{Hormander2}
\bysame, \textit{Symplectic classification of quadratic forms, and general Mehler formulas}, Math. Z. \textbf{219}, 413--449, 1995.  

\bibitem{Mizuhara1}
R.~Mizuhara, \textit{Microlocal smoothing effect for the Schr\"odinger evolution equation in a Gevrey class}, J. Math. Pures Appl. \textbf{91} (2), 115--136, 2009. 

\bibitem{Nicola1}
F.~Nicola, \textit{Phase space analysis of semilinear parabolic equations}, J. Funct. Anal. \textbf{267} (3), 727--743, 2014. 

\bibitem{Nicola3}
F.~Nicola and L.~Rodino,
\textit{Global Pseudo-Differential Calculus on Euclidean Spaces}, Birkh\"auser, 2010. 

\bibitem{Nicola2}
\bysame,
\textit{Propagation of Gabor singularities for semilinear Schr\"odinger equations}, 
Nonlinear Differ. Equ. Appl. \textbf{22} (6), 1715--1732, 2015.  

\bibitem{Pravda-Starov1}
K.~Pravda--Starov,
\textit{Contraction semigroups of elliptic quadratic differential operators}, Math. Z. \textbf{259}, 363--391, 2008.

\bibitem{Pravda-Starov2} 
\bysame, \textit{Subelliptic estimates for quadratic differential operators}, Amer. J. Math. \textbf{133} (1), 39--89, 2011. 

\bibitem{Rodino2}
K.~Pravda--Starov, L.~Rodino and P.~Wahlberg, \textit{Propagation of Gabor singularities for Schr\"odinger equations with quadratic Hamiltonians}, arXiv:1411.0251 [math.AP], 2015. 

\bibitem{Rodino1}
L.~Rodino and P.~Wahlberg, \textit{The Gabor wave front set}, Monatsh. Math. \textbf{173}, 625--655, 2014. 

\bibitem{Schulz1}
R.~Schulz and P.~Wahlberg, \textit{Microlocal properties of Shubin pseudodifferential and localization operators}, 
J. Pseudo-Differ. Oper. Appl. \textbf{7} (1), 91--111, 2016. 

\bibitem{Shubin1}
M.~A.~Shubin, \textit{Pseudodifferential Operators and Spectral Theory}, Springer, 2001. 

\bibitem{Taylor1}
M.~E.~Taylor, \textit{Noncommutative Harmonic Analysis}, Mathematical Surveys and Monographs \textbf{22}, AMS, Providence, Rhode Island, 1986.

\bibitem{Viola1} 
J.~Viola, 
\textit{Non-elliptic quadratic forms and semiclassical estimates for non-selfadjoint operators}, Int. Math. Res. Notices \textbf{20}, 4615--4671, 2013. 

\bibitem{Viola2} 
\bysame, 
\textit{Spectral projections and resolvent bounds for partially elliptic quadratic differential operators}, J. Pseudo-Differ. Oper. Appl. \textbf{4}, 145--221, 2013. 

\bibitem{Yosida1}
K.~Yosida, \textit{Functional Analysis}, Classics in Mathematics, Springer-Verlag, Berlin Heidelberg, 1995.

\bibitem{Toft1}
J.~Toft, \textit{The Bargmann transform on modulation and Gelfand--Shilov spaces, with applications to Toeplitz and pseudo-differential operators}, 
J. Pseudo-Differ. Oper. Appl. \textbf{3} (2), 145--227, 2012. 

\bibitem{Toft2}
\bysame, \textit{Images of function and distribution spaces under the Bargmann transform}, arXiv:1409.5238 [math.FA], 2015. 

\bibitem{Tranquilli1}
G.~Tranquilli, \textit{Global normal forms and global properties in function spaces for second order Shubin type operators}, PhD thesis, Universit\`a di Cagliari, 2014.

\bibitem{Wahlberg1}
P.~Wahlberg, \textit{Propagation of polynomial phase space singularities for Schr\"odinger equations with quadratic Hamiltonians}, 
accepted for publication, Math. Scand., arXiv:1411.6518 [math.AP], 2016. 

\end{thebibliography}
\end{document}